\newtheorem{theorem}{Theorem}[section]
\newtheorem{lemma}[theorem]{Lemma}
\newtheorem{proposition}[theorem]{Proposition}
\newtheorem{corollary}[theorem]{Corollary}
\newtheorem{remark}[theorem]{Remark}
\newcommand{\ip}{\int_{\mathbb{R}^+}}
\newcommand{\C}{\mathbb{C}}
\def\bb1{{1\!\!1}}
\def\CalL{\mathcal{L}}
\def\bu{\hat{u}}
\def\bv{\hat{v}}
\def\bU{{\hat{U}}}
\def\bV{{\hat{v}}}
\def \tu{\tilde{u}}
\def \tv{\tilde{v}}
\def\tU{{\widetilde{U}}}
\def\R{\Re e}
\def\I{\Im m}
\def\I{\Im m}
\def\sgn{\rm sgn}
\begin{document}
\title[Stability of boundary layers]
{Spectral stability of noncharacteristic isentropic 
Navier--Stokes boundary layers}
\author[Costanzino, Humpherys, Nguyen, and Zumbrun]{Nicola Costanzino, Jeffrey Humpherys, \\ Toan Nguyen, and Kevin Zumbrun}

\date{Last Updated:  June 22, 2007}

\thanks{ This work was supported in part by the National Science Foundation award numbers DMS-0607721 and DMS-0300487.}

\address{Department of Mathematics, Pennsylvania State
University, University Park, PA, 16802}
\email{costanzi@math.psu.edu}
\address{Department of Mathematics, Brigham Young University, Provo, UT 84602}
\email{jeffh@math.byu.edu}
\address{Department of Mathematics, Indiana University, Bloomington, IN 47402}
\email{nguyentt@indiana.edu}
\address{Department of Mathematics, Indiana University, Bloomington, IN 47402}
\email{kzumbrun@indiana.edu}

\begin{abstract}
Building on work of Barker, Humpherys, Lafitte, Rudd, and Zumbrun
in the shock wave case, we study stability of
compressive, or ``shock-like'',
boundary layers of the isentropic compressible
Navier--Stokes equations with $\gamma$-law pressure
by a combination of asymptotic ODE estimates and numerical Evans function
computations.
Our results indicate stability for $\gamma\in [1, 3]$ for all
compressive boundary-layers, independent of amplitude, save for 
inflow layers in the characteristic limit (not treated).
Expansive inflow boundary-layers have been shown to be 
stable for all amplitudes by Matsumura and Nishihara using energy estimates.
Besides the parameter of amplitude appearing in the shock case, 
the boundary-layer case features an additional parameter
measuring displacement of the background profile, which greatly
complicates the resulting case structure.
Moreover, inflow boundary layers turn out to have quite delicate
stability in both large-displacement and large-amplitude limits, 
necessitating the additional use of a mod-two stability index 
studied earlier by Serre and Zumbrun in order to decide stability.
\end{abstract}

\maketitle

\tableofcontents
\bigbreak
\section{Introduction} \label{int}

Consider the isentropic compressible Navier-Stokes equations
\begin{equation}
\begin{split}
\label{eulerian}
\rho_{t}+ (\rho u)_{x} &=0, \\
(\rho u)_{t}+ (\rho u^2)_x + p(\rho)_{x} &= u_{xx}
\end{split}
\end{equation}
on the quarter-plane $x,t \ge 0$,
where $\rho>0$, $u$, $p$ denote density, velocity, and pressure
at spatial location $x$ and time $t$,
with $\gamma$-law pressure function
\begin{equation}\label{gaslaw}
p(\rho) = a_0 \rho^{\gamma},
\qquad
a_0>0, \, \gamma \geq 1,
\end{equation}
and noncharacteristic constant ``inflow'' or ``outflow''
boundary conditions
\begin{equation}\label{inflow}
(\rho, u)(0,t) \equiv (\rho_0, u_0), \qquad u_0>0
\end{equation}
or
\begin{equation}\label{outflow}
u(0,t) \equiv u_0 \qquad u_0<0
\end{equation}
as discussed in \cite{SZ,GMWZ.5,GMWZ.6}.
The sign of the velocity at $x=0$ determines whether
characteristics of the hyperbolic transport equation
$\rho_t+ u\rho_x= f$ enter the domain
(considering $f:=\rho u_x$ as a lower-order forcing term),
and thus whether $\rho(0,t)$ should be prescribed.
The variable-coefficient parabolic equation
$\rho u_t - u_{xx}= g$ requires prescription of $u(0,t)$ in
either case, with $g:= -\rho (u^2/2)_x -p(\rho)_x $.

By comparison, the purely hyperbolic isentropic Euler equations
\begin{equation}
\begin{split}
\label{euler}
\rho_{t}+ (\rho u)_{x} &=0, \\
(\rho u)_{t}+ (\rho u^2)_x + p(\rho)_{x} &= 0
\end{split}
\end{equation}
have characteristic speeds $a= u\pm \sqrt{p'(\rho)}$, hence,
depending on the values of $(\rho, u)(0,t)$, may have one, two,
or no characteristics entering the domain, hence require one,
two, or no prescribed boundary values.
In particular, there is a discrepancy between the number of
prescribed boundary values for \eqref{eulerian} and \eqref{euler}
in the case of mild inflow $u_0>0$ small (two for \eqref{eulerian}, one
for \eqref{euler}) or
strong outflow $u_0<0$ large (one for \eqref{eulerian}, none
for \eqref{euler}), indicating the possibility of {\it boundary layers},
or asymptotically-constant stationary solutions of \eqref{eulerian}:
\begin{equation}\label{BL}
(\rho, u)(x,t)\equiv (\hat \rho, \hat u)(x),
\qquad
\lim_{z\to +\infty} (\hat \rho, \hat u)(z)= (\rho_+, u_+).
\end{equation}
Indeed, existence of such solutions is straightforward to verify
by direct computations on the (scalar) stationary-wave ODE;
see \cite{MeZ,SZ,MN.2,KNZ,GMWZ.5,GMWZ.6} or Section \ref{profsec}.
These may be either of ``expansive'' type, resembling rarefaction
wave solutions on the whole line,  or ``compressive'' type,
resembling viscous shock solutions.

A fundamental question is whether or not such boundary layer solutions
are {\it stable} in the sense of PDE.
For the expansive inflow case, it has been shown
in \cite{MN.2} that {\it all} boundary layers are stable,
independent of amplitude, by energy estimates similar
to those used to prove the corresponding result for rarefactions
on the whole line.
Here, we concentrate on the complementary, {\it compressive case}
(though see discussion, Section \ref{discussion}).

Linearized and nonlinear stability of general
(expansive or compressive) {\it small-amplitude} noncharacteristic
boundary layers of \eqref{eulerian} have been established
in \cite{MN.2,R,KNZ,GMWZ.5}.
More generally, it has been shown in \cite{GMWZ.5,YZ} that linearized
and nonlinear stability are equivalent to {spectral stability},
or nonexistence of nonstable (nonnegative real part) eigenvalues of
the linearized operator about the layer, for boundary layers of arbitrary amplitude.
However, up to now the spectral stability of {\it large-amplitude
compressive} boundary layers has remained largely undetermined.\footnote{
See, however, the
investigations of \cite{SZ} on stability index, or parity of the number
of nonstable eigenvalues of the linearized operator about the layer.}

We resolve this question in the present paper, carrying out a systematic,
global study classifying the stability of all possible compressive
boundary-layer solutions of \eqref{eulerian}.
Our method of analysis is by a combination of asymptotic ODE techniques
and numerical Evans function computations,
following a basic approach introduced recently in \cite{HLZ,BHRZ} for
the study of the closely related shock wave case.
Here, there are interesting complications associated with the
richer class of boundary-layer solutions as compared to possible
shock solutions, the delicate stability properties of the
inflow case, and, in the outflow case, the nonstandard eigenvalue problem
arising from reduction to Lagrangian coordinates.

Our conclusions are, for both inflow and outflow conditions,
that compressive boundary layers that are uniformly noncharacteristic
in a sense to be made precise later (specifically, $v_+$ bounded
away from $1$, in the terminology of Section \ref{profsec})
are {\it unconditionally stable}, independent of amplitude,
on the range $\gamma \in [1,3]$ considered in our numerical
computations.
We show by energy estimates that {\it outflow boundary layers are stable
also in the characteristic limit}.
The omitted characteristic limit in the inflow case, 
analogous to the small-amplitude limit
for the shock case should be treatable by the singular perturbation
methods used in \cite{PZ,FS} to treat the small-amplitude shock case;
however, we do not consider this case here.

In the inflow case, our results, together with
those of \cite{MN.2}, completely resolve the question of stability
of isentropic (expansive or compressive) uniformly noncharacteristic
boundary layers for $\gamma\in [1,3]$, yielding
{\it unconditional stability independent of amplitude or type.}
In the outflow case, we show stability of all {compressive} boundary
layers without the assumption of uniform noncharacteristicity.

\subsection{Discussion and open problems}\label{discussion}
The small-amplitude results obtained in \cite{MN.2,KNZ,R,GMWZ.5}
are of ``general type'', making little use of the specific
structure of the equations.
Essentially, they all require that the difference between the
boundary layer solution and its constant limit at $|x|=\infty$
be small in $L^1$.\footnote{Alternatively, as in
\cite{MN.2,R}, the essentially equivalent condition
that $x\hat v'(x)$ be small in $L^1$.
(For monotone profiles,
$\int_0^{+\infty} |\hat v-v_+|dx=
\pm\int_0^{+\infty} (\hat v-v_+) dx=
\mp\int_0^{+\infty} x\hat v'dx$.)}
As pointed out in \cite{GMWZ.5}, this is
the ``gap lemma'' regime in which standard asymptotic ODE
estimates show that behavior is essentially governed by
the limiting constant-coefficient equations at infinity,
and thus stability may be concluded immediately from stability
(computable by exact solution) of the constant layer
identically equal to the limiting state.
These methods do not suffice to treat either the (small-amplitude)
characteristic limit or the large-amplitude case, which require
more refined analyses.
In particular, up to now, {\it there was no analysis considering boundary
layers approaching a full viscous shock profile},
{\it not even a profile of vanishingly small amplitude}.
Our analysis of this limit indicates why: the appearance of a small
eigenvalue near zero prevents uniform estimates such as would be obtained
by usual types of energy estimates.

By contrast, the large-amplitude results obtained here and (for expansive
layers) in \cite{MN.2}
make use of the specific form of the equations.
In particular, both analyses make use of the advantageous structure
in Lagrangian coordinates.
The possibility to work in Lagrangian coordinates was first pointed out
by Matsumura--Nishihara \cite{MN.2} in the inflow case,
for which the stationary boundary transforms to a moving boundary
with constant speed.
Here we show how to convert the outflow problem also to Lagrangian
coordinates, by converting the resulting variable-speed
boundary problem to a constant-speed one with modified boundary condition.
This trick seems of general use.  
In particular, 
it might be possible that the
energy methods of \cite{MN.2} applied in this framework would yield
unconditional stability of expansive boundary-layers,
completing the analysis of the outflow case.
Alternatively, this case could be attacked by the methods of the
present paper.
These are two further interesting direction for future investigation.

In the outflow case, a further transformation to the
``balanced flux form'' introduced in \cite{PZ}, in which the equations
take the form of the integrated shock equations, 
allows us to establish stability in the characteristic limit by
energy estimates like those of \cite{MN} in the shock case.
The treatment of the characteristic inflow limit by the methods of \cite{PZ,FS}
seems to be another extremely interesting direction for future study.

Finally, we point to the extension of the present methods to
full (nonisentropic) gas dynamics and multidimensions as
the two outstanding open problems in this area.

New features of the present analysis as compared to the shock
case considered in \cite{BHRZ,HLZ} are the presence of two parameters,
strength and displacement, indexing possible boundary layers,
vs. the single parameter of strength in the shock case, and
the fact that the limiting equations in several asymptotic regimes
possess zero eigenvalues, making the limiting stability analysis much
more delicate than in the shock case.
The latter is seen, for example, in the limit as a compressive
boundary layer approaches a full stationary shock solution,
which we show to be spectrally equivalent to the situation
of unintegrated shock equations on the whole line.
As the equations on the line
possess always a translational eigenvalue at $\lambda=0$, we
may conclude existence of a zero at $\lambda=0$ for the limiting equations
and thus a zero {\it near} $\lambda=0$ as we approach this limit,
which could be stable or unstable.
Similarly, the Evans function in the inflow case is shown to
converge in the large-strength limit to a function with a
zero at $\lambda=0$, with the same conclusions;
see Section \ref{description} for further details.

To deal with this latter circumstance, we find it necessary
to make use also of topological information provided by the
stability index of \cite{PW,GZ,SZ}, a mod-two index counting
the parity of the number of unstable eigenvalues.
Together with the information that there is at most one
unstable zero, the parity information provided by the stability
index is sufficient to determine whether an unstable zero
does or does not occur.
Remarkably, in the isentropic case we are able to compute {explicitly}
the stability index for all parameter values, recovering
results obtained by indirect argument in \cite{SZ},
and thereby completing the stability analysis in the presence
of a single possibly unstable zero.


\section{Preliminaries}\label{prelim}
We begin by carrying out a number of preliminary steps similar to
those carried out in \cite{BHRZ,HLZ} for the shock case, but
complicated somewhat by the need to treat the boundary and
its different conditions in the inflow and outflow case.

\subsection{Lagrangian formulation.}\label{lagrangiansec}

The analyses of \cite{HLZ,BHRZ} in the shock wave case were carried
out in Lagrangian coordinates, which proved to be particularly
convenient.
Our first step, therefore, is to convert the Eulerian formulation
\eqref{eulerian} into Lagrangian coordinates similar to those of the shock case.
However, standard Lagrangian coordinates in which the spatial variable
$\tilde x$ is constant on particle paths are not appropriate for the
boundary-value problem with inflow/outflow.
We therefore introduce instead ``psuedo-Lagrangian'' coordinates
\begin{equation}\label{psuedo}
\tilde x:=\int_0^x \rho(y,t)\, dy, \quad \tilde t:=t,
\end{equation}
in which the physical boundary $x=0$ remains fixed at $\tilde x=0$.

Straightforward calculation reveals that in these coordinates
\eqref{eulerian} becomes

\begin{equation}\label{NS}
\begin{aligned}
v_t - s v_{\tilde x} - u_{\tilde x} &= \sigma(t)v_{\tilde x} \\
u_t - s u_{\tilde x} + p(v)_{\tilde x} -\left(\frac{u_{\tilde
x}}{v}\right)_{\tilde x} &= \sigma(t) u_{\tilde x}
\end{aligned}
\end{equation}
on $x>0$, where
\begin{equation}\label{s(t)}
s = -\frac{u_0}{v_0}, \;  \sigma(t) = m(t) - s, \; m(t) := -
\rho(0,t) u(0,t)=-u(0,t)/v(0,t),
\end{equation} so that $m(t)$ is the negative of the momentum at the
boundary $x= \tilde x = 0$. From now on, we drop the tilde, denoting
$\tilde x$ simply as $x$.

\subsubsection{Inflow case}\label{lagin}
For the inflow case, $u_0>0$ so we may prescribe {\em two} boundary
conditions on \eqref{NS}, namely
\begin{equation}
v|_{x=0} = v_0 > 0, \hspace{0.5cm} u|_{x=0} = u_0 > 0
\end{equation} where both $u_0, v_0$ are constant.

\subsubsection{Outflow case}\label{lagout}
For the outflow case, $u_0<0$ so we may prescribe {\em only one}
boundary condition on \eqref{NS}, namely
\begin{equation}
u|_{x=0} = u_0 < 0.
\end{equation} Thus $v(0,t)$ is an unknown in the problem, which
makes the analysis of the outflow case more subtle than that of
the inflow case.

\subsection{Rescaled coordinates}\label{renormalization}
Our next step is to rescale the equations in such a way
that coefficients remain bounded in the strong boundary-layer
limit.
Consider the change of variables
\begin{equation}\label{scaling}
(x,t,v,u) \rightarrow (-\varepsilon s x, \varepsilon s^2 t,
v/\varepsilon, -u/(\varepsilon s)),
\end{equation}
where $\varepsilon$ is chosen so that
\begin{equation}
0 < v_+ < v_- = 1,
\end{equation}
where $v_+$ is the limit as $x\to +\infty$ of the boundary layer
(stationary solution) $(\hat v, \hat u)$ under consideration
and $v_-$ is the limit as $x\to -\infty$ of its continuation
into $x<0$ as a solution of the standing-wave ODE (discussed in
more detail just below).
Under the rescaling \eqref{scaling}, \eqref{NS} becomes
\begin{equation} \label{rescaled}
\begin{aligned}
v_t + v_x - u_x &= \sigma(t)v_x,\\
u_t + u_x + (a v^{-\gamma})_x &= \sigma(t) u_x +
\left(\frac{u_x}{v}\right)_x
\end{aligned}
\end{equation}
where $a = a_0 \varepsilon^{-\gamma-1} s^{-2}$, $\sigma=-u(0,t)/v(0,t)+1$,
on respective domains
$$
x>0 \, \hbox{\rm (inflow case)}
\qquad x<0 \, \hbox{\rm (outflow case)}.
$$

\subsection{Stationary boundary layers}\label{profsec}
Stationary boundary layers 
$$
(v,u)(x,t) = (\bv,\bu)(x)
$$
of \eqref{rescaled} satisfy
\begin{equation}\label{stationarybl}
\begin{split}
(a) \hspace{0.5cm} & \bv' - \bu' = 0 \\
(b) \hspace{0.5cm} & \bu' + (a\bv^{-\gamma}) = \left(\frac{\bu'}{\bv}\right)' \\
(c) \hspace{0.5cm} & (\bv,\bu)|_{x=0} = (v_0,u_0)\\
(d) \hspace{0.5cm} & \lim_{x\rightarrow \pm\infty}(\bv,\bu) =
(v,u)_\pm,
\end{split}
\end{equation}
where (d) is imposed at $+\infty$ in the inflow case, $-\infty$
in the outflow case and (imposing $\sigma=0$) $u_0=v_0$.
Using \eqref{stationarybl}(a) we can reduce
this to the study of the scalar ODE,
\begin{equation}\label{scalarode}
\bv' + (a\bv^{-\gamma})' = \left(\frac{\bv'}{\bv}\right)' \\
\end{equation} with the same boundary conditions at $x=0$ and 
$x=\pm\infty$ as above.  
Taking the antiderivative of this equation
yields
\begin{equation}\label{profeqC}
\bv' = \mathcal{H}_C(\bv) = \bv(\bv + a\bv^{-\gamma} +C ),
\end{equation}
where $C$ is a constant of integration.

Noting that $\mathcal{H}_C$ is convex, we find that there
are precisely two rest points of \eqref{profeqC} whenever boundary-layer
profiles exist, except at the single parameter value on the boundary
between existence and nonexistence of solutions, for which
there is a degenerate rest point (double root of $\mathcal{H}_C$).
Ignoring this degenerate case, we see that boundary layers terminating
at rest point $v_+$ as $x\to +\infty$ must either continue
backward into $x<0$ to terminate at a second rest point $v_-$ as
$x\to -\infty$, or else blow up to infinity as $x\to -\infty$.
The first case we shall call {\it compressive}, the second
{\it expansive}.

In the first case, the extended solution on the whole line
may be recognized as a standing viscous shock wave;
that is, {\it for isentropic gas
dynamics, compressive boundary layers are just restrictions to the half-line
$x\ge 0$ [resp. $x\le 0$] of standing shock waves.}
In the second case, as discussed in \cite{MN.2}, the boundary layers
are somewhat analogous to rarefaction waves on the whole line.
{From here on, we concentrate exclusively on the compressive case.}

With the choice $v_-=1$, we may carry out the integration of
\eqref{scalarode} once more, this time as a definite
integral from $-\infty$ to $x$, to obtain
\begin{equation}\label{profeq}
\bv' = H(\bv) = \bv(\bv-1 + a(\bv^{-\gamma} - 1)),
\end{equation}
where $a$ is found by letting $x \rightarrow +\infty$, yielding
\begin{equation}
\label{RH} a = -\frac{v_+ - 1}{v_+^{-\gamma} - 1} = v_+^\gamma
\frac{1-v_+}{1-v_+^\gamma} \, ;
\end{equation}
in particular,
$ a\sim v_+^\gamma $ in the large boundary layer limit $v_+\to 0$.
This is exactly the equation for viscous shock profiles
considered in \cite{HLZ}.

\subsection{Eigenvalue equations}\label{eigsec}
Linearizing \eqref{rescaled} about $(\bv,\bu)$, we obtain

\begin{equation} \label{linearized}
\begin{split}
& \tv_t + \tv_x - \tu_x = \frac{\tv(0,t)}{ v_0}\bv '  \\
& \tu_t + \tu_x - \left(\frac{h(\bv)}{\bv^{\gamma + 1}} \tilde v  \right)_x
- \left( \frac{\tu_x}{\bv} \right)_x=
\frac{\tv(0,t)}{v_0}\bu '  \\
& (\tv,\tu)|_{x=0} = (\tv_0(t),0) \\
& \lim_{x\rightarrow +\infty}(\tv,\tu) = (0,0)
\end{split}
\end{equation}
where $v_0=\hat v(0)$,
\begin{equation}\label{f}
h(\bv) = -\bv^{\gamma + 1} + a(\gamma - 1) + (a+1)\bv^\gamma
\end{equation}
and $\tv,\tu$ denote perturbations of $\bv, \bu$.

\subsubsection{Inflow case}\label{ineigen}
In the inflow case, $\tilde u(0,t)=\tilde v(0,t)\equiv 0$, yielding
\begin{equation}\label{eigen1}
\begin{aligned}
\lambda v + v_x - u_x &= 0\\
\lambda u + u_x - \left(\frac{h(\bv)}{\bv^{\gamma + 1}} v  \right)_x
&= \left( \frac{u_x}{\bv} \right)_x\\
\end{aligned}
\end{equation}
on $x>0$, with full Dirichlet conditions $(v,u)|_{x=0}=(0,0)$.

\subsubsection{Outflow case}\label{outeigen}
Letting $\tU := (\tv,\tu)^{T}$, $\hat U:=(\hat v, \hat u)^T$,
and denoting by $\CalL$ the operator
associated to the linearization about boundary-layer $(\hat v,\hat u)$,
\begin{equation}\label{linoperator}
\CalL := \partial_x A(x)   - \partial_x B(x)\partial_x,
\end{equation} where
\begin{equation}
A(x) = \left( \begin{array}{cc} 1 & -1 \\
-h(\bv)/\bv^{\gamma + 1} & 1  \\
\end{array} \right), \hspace{0.5cm} B(x) = \left( \begin{array}{cc} 0 & 0 \\
0 & \bv^{-1} \end{array} \right),
\end{equation}
we have
$\tilde U_t - \CalL \tilde U = \frac{\tv_0(t)}{v_0}\hat U'(x)$,
with associated eigenvalue equation
\begin{equation} \label{vectorevalue}
\lambda \tilde U - \CalL \tilde U = \frac{\tv(0,\lambda)}{v_0}\hat U'(x),
\end{equation}
where $\bU' = (\bv',\bu')$.

To eliminate the nonstandard inhomogeneous term on the righthand side of
\eqref{vectorevalue}, we introduce
a ``good unknown" (c.f. \cite{Al,CJLW,GMWZ.3,HuZ.3})
\begin{equation}
U := \tU - \lambda^{-1} \frac{\tv(0,\lambda)}{v_0} \hat U'(x).
\end{equation}
Since $\CalL \bU ' = 0$ by differentiation of the boundary-layer equation,
the system expressed in the good unknown becomes simply
\begin{equation} \label{lin-stability-prob}
 U_t - \CalL U = 0 \hspace{0.5cm} \mbox{in} \; x<0 ,
\end{equation}
or, equivalently, \eqref{eigen1}
with boundary conditions
\begin{equation}
\begin{aligned}
& U|_{x=0} = \frac{\tv(0,\lambda)}{v_0}(1 - \lambda^{-1}\bv '(0) ,
\, -\lambda^{-1} \bu '(0))^{T} \\
& \lim_{x \rightarrow +\infty} U = 0.
\end{aligned}
\end{equation}
Solving for $u|_{x=0}$ in terms of $v|_{x=0}$ and
recalling that $\hat v'=\hat u'$ by \eqref{profeq},
we obtain finally
\begin{equation}\label{outflowBC}
u|_{x=0} =\alpha(\lambda) v|_{x=0},
\qquad
\alpha(\lambda):= \frac{-\bv'(0)}{\lambda  - \bv '(0)}.
\end{equation}

\begin{remark}\label{outrmk}
Problems \eqref{vectorevalue} and \eqref{lin-stability-prob}--\eqref{eigen1}
are evidently equivalent for all $\lambda\ne 0$,
but are not equivalent for $\lambda=0$ (for which the change of coordinates
to good unknown becomes singular).  For, $U=\hat U'$ by inspection
is a solution of \eqref{lin-stability-prob}, but is not
a solution of \eqref{vectorevalue}.
That is, we have introduced by this transformation a spurious
eigenvalue at $\lambda=0$, which we shall have to account for later.
\end{remark}

\subsection{Preliminary estimates}\label{prelimests}

\begin{proposition} [\cite{BHRZ}] \label{profdecay}
For each $\gamma\ge 1$, $0<v_+\le 1/12<v_0 < 1$,
\eqref{profeq} has a unique
(up to translation) monotone
decreasing solution $\hat v$ decaying to endstates $v_\pm$
with a uniform exponential rate for $v_+$ uniformly bounded away from $v_-=1$.
In particular, for $0<v_+\le 1/12$,
\begin{subequations}
\label{decaybd}
\begin{align}
|\bv(x)-v_+|&\le C e^{-\frac{3(x-\delta)} {4}} \quad x\ge \delta,
\label{decaybd_1}\\
|\bv(x)-v_-|&\le
Ce^{\frac{(x-\delta)}{2}} \quad x\le \delta
\label{decaybd_2}
\end{align}
\end{subequations}
where $\delta$ is defined by $\hat v(\delta)=(v_-+v_+)/2$.
\end{proposition}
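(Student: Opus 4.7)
The plan is to view \eqref{profeq} as an autonomous scalar ODE and reduce everything to elementary phase-plane analysis plus linearization at the two rest points. The Rankine--Hugoniot relation \eqref{RH} was derived precisely so that $H(v_+)=H(1)=0$, and since $H(v)/v = (v-1) + a(v^{-\gamma}-1)$ is strictly convex on $(0,\infty)$ for $\gamma\ge 1$, there are no other zeros in $(0,1)$ and $H(v)<0$ on $(v_+,1)$. Hence, for any initial datum $v_0\in (v_+,1)$, the unique solution of \eqref{profeq} is strictly monotone decreasing and, by standard invariance arguments applied to the intervals $[v_+,v_0]$ and $[v_0,1)$, extends globally in $x$ with $\lim_{x\to -\infty}\hat v=1$, $\lim_{x\to +\infty}\hat v=v_+$. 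Translational freedom lets us normalize by $\hat v(\delta)=(1+v_+)/2$, pinning down a unique profile.

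For the exponential decay rates, I would compute $H'$ explicitly at the two rest points. Using $a v_+^{-\gamma}-a=1-v_+$ (which is just \eqref{RH}) one finds
\begin{equation*}
H'(v_+) \;=\; v_+ \;-\; \frac{\gamma(1-v_+)}{1-v_+^{\gamma}}, \qquad H'(1) \;=\; 1 \;-\; a\gamma.
\end{equation*}
The first task is to show $H'(v_+)\le -3/4$ uniformly on $\gamma\ge 1$, $0<v_+\le 1/12$. Since $(1-v_+)/(1-v_+^\gamma)$ is increasing in $\gamma$ (it equals $1$ at $\gamma=1$ and tends to $1-v_+$ times $1/(1-v_+^\gamma)\ge 1$), one gets $H'(v_+)\le v_+-1\le 1/12-1=-11/12<-3/4$. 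The second task is to show $H'(1)\ge 1/2$: bound $a\gamma=\gamma v_+^\gamma(1-v_+)/(1-v_+^\gamma)\le (12/11)\,\gamma v_+^\gamma$, and maximize $\gamma v_+^\gamma$ in $\gamma$ to get $\gamma v_+^\gamma\le -1/(e\ln v_+)\le 1/(e\ln 12)<0.15$, so $H'(1)\ge 1-(12/11)(0.15)>1/2$.

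With these linearized bounds in hand, the exponential estimates \eqref{decaybd} follow by elementary comparison. On $x\ge\delta$, set $w(x):=\hat v(x)-v_+$, so $w$ is positive, decreasing, and satisfies $w'=H(v_++w)=H'(v_+)w+O(w^2)$ with $H'(v_+)\le-3/4$; since $w(\delta)=(1-v_+)/2\le 1/2$ is bounded and $H(v_++w)/w$ is continuous and negative on $[0,(1-v_+)/2]$, one has $w'\le -\tfrac{3}{4}w$ throughout $[\delta,\infty)$ (for the relevant range of $v_+$), which integrates to \eqref{decaybd_1}. The estimate \eqref{decaybd_2} follows the same way from $H'(1)\ge 1/2$ applied to $1-\hat v$ on $(-\infty,\delta]$. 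Crucially, the constants in the bounds on $H/w$ can be chosen independent of $\gamma\in[1,\infty)$ and $v_+\in(0,1/12]$ because the estimates above are uniform; this uniformity (rather than any single asymptotic regime) is the main technical point of the proposition.

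The main obstacle is precisely this uniform-in-$(\gamma,v_+)$ control: individually, each of the linearization estimates and each of the comparison steps is routine, but one must verify that the nonlinear remainder $H(v_++w)+\tfrac{3}{4}w$ stays of favorable sign on $[0,(1-v_+)/2]$ for every admissible $(\gamma,v_+)$, which requires exploiting the convexity of $H/v$ and the explicit form of $a$ rather than purely local Taylor estimates.
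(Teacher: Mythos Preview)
Your approach is essentially the same as the paper's---phase-plane analysis of the scalar ODE, convexity for existence/monotonicity, and rate estimates at the rest points for the exponential decay. The paper, however, dispatches the ``obstacle'' you flag (uniform global control of $H(v)/(v-v_+)$, not just of $H'(v_+)$) in one stroke: it writes out the explicit factorization
\[
\frac{H(v)}{v-v_+}\;=\;v\;-\;\frac{1-v_+}{1-v_+^{\gamma}}\cdot\frac{1-(v_+/v)^{\gamma}}{1-(v_+/v)}
\]
and bounds the last factor via the elementary inequality $1\le\dfrac{1-x^{\gamma}}{1-x}\le\gamma$ for $0\le x\le1$, $\gamma\ge1$. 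This immediately gives the two-sided estimate $v-\gamma\le H(v)/(v-v_+)\le v-(1-v_+)$ on the whole interval $(v_+,1)$, uniformly in $(\gamma,v_+)$, without any separate handling of linearized versus nonlinear parts. Your endpoint computations of $H'(v_+)$ and $H'(1)$ are correct (your parenthetical about $(1-v_+)/(1-v_+^{\gamma})$ being increasing in $\gamma$ is backwards, but the conclusion $\gamma(1-v_+)/(1-v_+^{\gamma})\ge1$ is right and is exactly the $\le\gamma$ half of the inequality above), and they recover the limiting values of the paper's global bound; the factorization just makes the uniformity you were worried about automatic.
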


\begin{proof}
Existence and monotonicity follow trivially
by the fact that \eqref{profeq} is a scalar
first-order ODE with convex righthand side.
Exponential convergence as $x\to +\infty$ follows by
$H(v, v_+) =
(v-v_+)\Big(v - \Big(\frac{1-v_+}{1-v_+^{\gamma}}\Big)
\Big(\frac{1 - \big(\frac{v_+}{v}\big)^{\gamma}}{1 -
\big(\frac{v_+}{v}\big)}\Big)\Big),
$
whence
$v- \gamma \le \frac{H(v,v_+)}{v-v_+}\le v-(1-v_+)$
by $1\le \frac{1-x^\gamma}{1-x}\le \gamma$
for $0\le x\le 1$.  Exponential convergence as $x\to -\infty$
follows by a similar, but more straightforward calculation,
where, in the ``centered'' coordinate $\tilde x:=x-\delta$,
the constants $C>0$ are uniform with respect to $v_+, v_0$.
See \cite{BHRZ} for details.
\end{proof}

The following estimates are established in Appendices \ref{basicproof}
and \ref{outbasicproof}.

\begin{proposition} \label{hf}
Nonstable eigenvalues $\lambda$ of \eqref{eigen1}, i.e., eigenvalues
with nonnegative real part, are confined for any $0<v_+\le 1$ to the
region
\begin{equation}
\label{hfbounds1} \Lambda:= \{\lambda:\, \R(\lambda) + |\I(\lambda)|
\leq \frac 12\Big(2\sqrt{\gamma}+1\Big)^2\}.
\end{equation}
for the inflow case, and to the region
\begin{equation}
\label{hfbounds2} \Lambda:= \{\lambda:\, \R(\lambda) + |\I
(\lambda)| \le  \max\{\frac{3\sqrt2}{2},3\gamma+ \frac{3}{8}\}
\end{equation}
for the outflow case.
\end{proposition}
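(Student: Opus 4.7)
The plan is to establish these bounds by direct energy estimates on the eigenvalue system \eqref{eigen1}, taking real and imaginary parts separately to control $\R\lambda$ and $|\I\lambda|$. This is a high-frequency (parabolic) regime, so one expects the viscous dissipation $\|u_x/\hat v^{1/2}\|^2$ to dominate once $|\lambda|$ is large enough, confining unstable eigenvalues to a bounded set whose size is dictated by the principal coefficients and boundary data.

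First, introduce the positive weight $\mu(\hat v) := h(\hat v)/\hat v^{\gamma+1}$; positivity on $(0,1]$ follows from the factorization $h(v) = v^\gamma(a+1-v) + a(\gamma-1) > 0$, and this choice symmetrizes the convection matrix of \eqref{eigen1}. Multiply the first equation by $\mu(\hat v)\bar v$ and the second by $\bar u$, add, and integrate. Integration by parts on the derivative terms, together with exponential decay at $\pm\infty$ (Proposition \ref{profdecay}), leaves only contributions at $x=0$: these vanish outright in the \emph{inflow} case by the homogeneous Dirichlet data, while in the \emph{outflow} case they reduce via \eqref{outflowBC} to boundary quantities proportional to $\alpha(\lambda)|v(0)|^2$. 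The cross terms $\int \mu\, u_x\bar v$ and $\int \mu\, v_x\bar u$ reassemble after taking real parts into a single $\R\int \mu(u\bar v)_x$, which yields one more boundary piece plus an interior term $-\int \mu'(\hat v)\hat v'\,\R(u\bar v)$.

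Second, splitting the resulting identity into real and imaginary parts produces
\[
(\R\lambda)\,\mathcal{E}(v,u) + \|u_x\|_{1/\hat v}^2 \le |J_r| + |B_r|, \qquad |\I\lambda|\,\mathcal{E}(v,u) \le |J_i| + |B_i|,
\]
where $\mathcal{E}(v,u) := \|v\|_\mu^2 + \|u\|^2$, the bulk terms $J_r,J_i$ collect the $\mu'(\hat v)\hat v'$-weighted quadratics, and $B_r,B_i$ are the boundary contributions. The bulk terms are controlled by Cauchy--Schwarz and Young's inequality $|\int \mu\, v\bar u| \le \epsilon\|u\|^2 + C_\epsilon\|v\|^2_\mu$, with the constants depending only on uniform bounds of the logarithmic derivative $\mu'(\hat v)\hat v'/\mu$ along the profile, which remains bounded as $v_+\to 0$ because $\hat v'$ is exponentially localized in the transition layer. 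Adding the real and imaginary bounds and dividing by $\mathcal{E}(v,u) > 0$ yields $\R\lambda + |\I\lambda| \le$ the stated constants; the $\sqrt\gamma$ scaling in the inflow constant reflects the sound speed $\sqrt{|p'(\hat v)|} \sim \sqrt\gamma$, and the explicit value $\tfrac12(2\sqrt\gamma+1)^2$ arises from an optimal Young splitting between the $\mu\,v_x\bar u$ cross term and the dissipation.

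The main obstacle is the outflow case: the boundary condition \eqref{outflowBC} depends on $\lambda$, so the boundary term $B(\lambda) \sim \alpha(\lambda)|v(0)|^2$ is not a pure perturbation. For $|\lambda|$ large, $|\alpha(\lambda)|$ is small and $B$ is harmless; for intermediate $\lambda$ it must be absorbed via a trace inequality $|v(0)|^2 \le C(\|v\|\|v_x\| + \|v\|^2)$ combined with the first equation $v_x = u_x - \lambda v$ to trade the unavailable $v_x$-dissipation for $u_x$-dissipation at the cost of a $|\lambda|$-factor. This trade, and the fact that the symmetrized energy provides dissipation only in $u_x$ (not $v_x$), is precisely what forces the two-tier maximum $\max\{3\sqrt2/2,\, 3\gamma + 3/8\}$ in the outflow bound. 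Once these boundary and noncoercivity issues are handled, the remainder of the estimate is parallel to the inflow case.
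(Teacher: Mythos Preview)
Your plan diverges from the paper's proof in two essential respects, and as written it has a real gap.

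\textbf{What the paper actually does.} The paper does not estimate \eqref{eigen1} directly with a symmetrizer $\mu$. It first passes to the \emph{integrated} (``balanced flux'') variables $(\tilde u,\tilde v,\tilde v')^T=(w,u-v,\lambda v)^T$, obtaining exactly the integrated shock system of \cite{BHRZ}. Two separate identities are then derived: (i) multiplying the momentum equation by $\hat v\,\bar u$ gives $(\R\lambda+|\I\lambda|)\int\hat v|u|^2+\int|u'|^2\le \sqrt2\int(h/\hat v^\gamma)|v'||u|+\sqrt2\int\hat v|u'||u|$; and (ii) a \emph{Kawashima-type} identity, obtained by multiplying the momentum equation by $\bar v'$, gives $\tfrac12\int[h\hat v^{-\gamma-1}+a\gamma\hat v^{-\gamma-1}]|v'|^2\le\int|u'|^2$ for $\R\lambda\ge0$. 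The second identity is what allows the dangerous $|v'||u|$ cross term in (i) to be absorbed by the dissipation; after a two-parameter Young split ($\theta,\epsilon$) and the pointwise bound $\sup|h/\hat v^\gamma|\le\gamma$, optimizing $\epsilon=1/(2\sqrt\gamma+1)$ produces the constant $\tfrac12(2\sqrt\gamma+1)^2$ exactly. The outflow argument is parallel, with explicit boundary terms handled via $|\alpha|\le(4|\lambda|)^{-1}$ and the choices $\theta=1/6$, $\epsilon=2/3$, yielding the stated two-branch maximum.

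\textbf{Where your sketch falls short.} You never introduce the Kawashima estimate (testing against $\bar v'$), and this is the crux: your symmetrized energy supplies dissipation only in $u_x$, so the interior term you identify, $\tfrac12\int\mu'(\hat v)\hat v_x\,|v|^2$, sits with the \emph{wrong sign} (since $\mu'<0$ and $\hat v_x<0$) and must be bounded by $\sup_x(\mu_x/\mu)\cdot\|v\|_\mu^2$. You assert this supremum is uniformly bounded as $v_+\to0$ ``because $\hat v'$ is exponentially localized,'' but you do not verify it, and even if true it would not recover the specific constants $\tfrac12(2\sqrt\gamma+1)^2$ or $3\gamma+\tfrac38$ that the proposition claims---those constants come from the precise interplay of the Kawashima bound, the identity $\sup|h/\hat v^\gamma|=\gamma$, and an optimized Young inequality, none of which appear in your outline. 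Your reorganization of the cross terms is also miswritten: after integration by parts one obtains $-\int\mu u_x\bar v+\int\mu v\bar u_x$, whose real part \emph{cancels} (they are complex conjugates with opposite sign), rather than reassembling into $\R\int\mu(u\bar v)_x$. Finally, the outflow boundary analysis needs the explicit estimate $|\alpha(\lambda)|\le|\hat v'(0)|/|\lambda|\le 1/(4|\lambda|)$ and a careful choice of Young parameters, not a generic trace inequality; the trade you propose (converting $v_x$ to $u_x-\lambda v$) reintroduces a $|\lambda|$ factor that would defeat the bound.
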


\subsection{Evans function formulation}\label{evanssec}
Setting $w:=\frac{u'}{\hat v} +\frac{h(\hat v)}{\hat v^{\gamma+1}}v - u$,
we may express \eqref{eigen1} as a first-order system
\begin{equation}\label{firstorder}
W' = A(x,\lambda) W,
\end{equation}
where
\begin{equation}
\label{evans_ode}
A(x,\lambda) = \begin{pmatrix}
0 & \lambda & \lambda\\
0 & 0 & \lambda\\
\hat v & \hat v & f(\hat v)- \lambda\\ \end{pmatrix},
\quad W = \begin{pmatrix}
w\\u-v\\v\end{pmatrix},\quad \prime = \frac{d}{dx},
\end{equation}
where
\begin{equation}\label{feq}
f(\bv) = \bv- \bv^{-\gamma} h(\bv)
= 2\bv - a(\gamma-1)\bv^{-\gamma} - (a+1),
\end{equation}
with $h$ as in \eqref{f} and $a$ as in \eqref{RH},
or, equivalently,
\begin{equation}\label{feq2}
f(\bv) = 2\bv - (\gamma-1)
\Big(\frac{1-v_+}{1-v_+^\gamma}\Big) \Big(\frac{v_+}{\bv}\Big)^{\gamma}
- \Big(\frac{1-v_+}{1-v_+^\gamma}\Big) v_+^{\gamma} -1.
\end{equation}

\begin{remark}\label{shockrel}
The coefficient matrix $A$ may be recognized as a rescaled version
of the coefficient matrix 
$\mathcal{A}$ appearing in the shock case \cite{BHRZ,HLZ},
with
$$
\mathcal{A}=
\begin{pmatrix}1&0&0\\0&1&0\\0&0&\lambda\end{pmatrix}
A\begin{pmatrix}1&0&0\\0&1&0\\0&0&1/\lambda\end{pmatrix}.
$$
The choice of variables $(w,u-v,v)^T$ may be recognized as the modified flux
form of \cite{PZ}, adapted to the hyperbolic--parabolic case.
\end{remark}

Eigenvalues of \eqref{eigen1} correspond to nontrivial solutions $W$
for which the boundary conditions $W(\pm\infty)=0$ are satisfied.
Because $A(x,\lambda)$ as a function of $\bv$ is asymptotically constant
in $x$, the behavior near $x=\pm \infty$ of solutions of
\eqref{evans_ode} is governed by the limiting constant-coefficient
systems
\begin{equation}
\label{apm}
W' = A_\pm(\lambda) W, \qquad
A_\pm(\lambda):=A(\pm \infty,\lambda),
\end{equation}
from which we readily find on the (nonstable) domain $\Re \lambda \ge 0$,
$\lambda\ne 0$ of
interest that there is a one-dimensional
unstable manifold $W_1^-(x)$ of solutions decaying at $x=-\infty$ and
a two-dimensional stable manifold $W_2^+(x) \wedge W_3^+(x)$ of
solutions decaying at $x=+\infty$, analytic in $\lambda$, with
asymptotic behavior
\begin{equation}\label{asymptotics}
W_j^\pm(x,\lambda) \sim e^{\mu_\pm(\lambda) x} V_j^\pm(\lambda)
\end{equation}
as $x\to \pm \infty$, where $\mu_\pm(\lambda)$ and $V_j^\pm(\lambda)$
are eigenvalues and associated analytically chosen
eigenvectors of the limiting coefficient matrices $A_\pm (\lambda)$.
A standard choice of eigenvectors $V_j^\pm$ \cite{GZ,BrZ,BDG,HSZ},
uniquely specifying $W_j^\pm$ (up to constant factor) is obtained by
Kato's ODE \cite{Kato}, a linear, analytic ODE whose solution
can be alternatively characterized by the property
that there exist corresponding left eigenvectors $\tilde V_j^\pm$ such that
\begin{equation}\label{katoprop}
(\tilde V_j\cdot V_j)^\pm \equiv  {\rm constant}, \quad
(\tilde V_j \cdot \dot V_j)^\pm \equiv 0,
\end{equation}
where ``$\, \, \dot{    }\, \,$'' denotes $d/d\lambda$;
for further discussion, see \cite{Kato,GZ,HSZ}.

\subsubsection{Inflow case}\label{inev}
In the inflow case, $0\le x\le +\infty$, we define
the {\it Evans function} $D$
as the analytic function
\begin{equation}\label{evansdef}
D_{\rm in}(\lambda): = \det(W_1^0, W_2^+, W_3^+)_{\mid x=0},
\end{equation}
where $W^+_j$ are as defined above, and $W^0_1$ is a solution
satisfying the boundary conditions $(v,u)=(0,0)$ at $x=0$,
specifically,
\begin{equation}\label{w01}
W^0_1|_{x=0}=(1,0,0)^T.
\end{equation}
With this definition, eigenvalues of $\mathcal{L}$ 
correspond to zeroes of $D$ both
in location and multiplicity; moreover, the Evans function
extends analytically to $\lambda=0$, i.e., to all of $\Re \lambda \ge 0$.
See \cite{AGJ,GZ,MZ.3,Z.3} for further details.

Equivalently,
following \cite{PW,BHRZ}, we may express the Evans function as
\begin{equation}\label{adjevans}
D_{\rm in}(\lambda)=
\big( W_1^0 \cdot \widetilde{W}_1^+ \big)_{\mid x=0},
\end{equation}
where $\widetilde{W}_1^+(x)$ spans the one-dimensional
unstable manifold of solutions decaying at $x=+\infty$
(necessarily orthogonal to the span of
$W_2^+(x)$ and  $W_3^+(x)$)
of the adjoint eigenvalue ODE
\begin{equation}\label{adjode}
\widetilde{W}' =
- A(x,\lambda)^*  \widetilde{W}.
\end{equation}
The simpler representation \eqref{adjevans}
is the one that we shall use here.

\subsubsection{Outflow case}\label{outev}
In the outflow case, $-\infty \le x\le 0$, we define
the {\it Evans function} as
\begin{equation}\label{evansdef2}
D_{\rm out}(\lambda): = \det(W_1^-, W_2^0, W_3^0)_{\mid x=0},
\end{equation}
where $W^-_1$ is as defined above, and $W^0_j$ are a basis
of solutions of \eqref{firstorder}
satisfying the boundary conditions \eqref{outflowBC},
specifically,
\begin{equation}\label{w023}
W^0_2|_{x=0}=(1,0,0)^T,
\qquad
W^0_3|_{x=0}=\Big(0, -\frac{\lambda}{\lambda - \bv '(0)}, 1\Big)^T,
\end{equation}
or, equivalently, as
\begin{equation}\label{adjevans2}
D_{\rm out}(\lambda)=\big(W_1^-\cdot \widetilde{W}_1^0 \big)_{\mid x=0},
\end{equation}
where
\begin{equation}\label{tildew1}
\widetilde{W}^0_1= \Big(0, -1,
-\frac{\bar \lambda}{\bar \lambda - \bv '(0)}\Big)^T
\end{equation}
is the solution of the adjoint eigenvalue ODE dual to $W^0_2$ and $W^0_3$.

\begin{remark}\label{outevansrmk}
As discussed in Remark \ref{outrmk},
$D_{\rm out}$ has a spurious zero at $\lambda=0$
introduced by the coordinate change to ``good unknown''.
\end{remark}

\section{Main results}\label{description}

We can now state precisely our main results.

\subsection{The strong layer limit}\label{strong}
Taking a formal limit as $v_+\to 0$ of the rescaled equations
\eqref{rescaled} and recalling that $a\sim v_+^\gamma$, we obtain
a {limiting evolution equation}
\begin{equation}
\begin{split}
\label{pressureless}
v_t + v_x - u_x &= 0,\\
u_t + u_x  &= \left(\frac{u_x}{v}\right)_x
\end{split}
\end{equation}
corresponding to a {\it pressureless gas}, or $\gamma=0$.

The associated {limiting profile equation} $v' =v(v-1)$
has explicit solution
\begin{equation}\label{v^0}
\hat v^0(x)= \frac{1-\tanh \big(\frac{x-\delta}{2}\big)}{2},
\end{equation}
$\hat v^0(0)= \frac{1-\tanh (-\delta/2)}{2}= v_0$;
the limiting eigenvalue system is
$
W' = A^0(x,\lambda) W,
$
\begin{equation}
\label{limevans_ode}
A^0(x,\lambda) = \begin{pmatrix}0 & \lambda & \lambda\\0 & 0 & \lambda\\
\bv^0& \bv^0 &f^0(\bv^0)-\lambda \end{pmatrix},
\end{equation}
where
$f^0(\bv^0) = 2\bv^0 - 1 = -\tanh\big(\frac{x+\delta}{2}\big).$

Convergence of the profile and eigenvalue equations is {\it uniform} on any
interval $\hat v^0\ge \epsilon >0$, or, equivalently,
$x-\delta \le L$, for $L$ any positive constant, where
the sequence of coefficient matrices is
therefore a {\it regular perturbation} of its limit.
Following \cite{HLZ}, we call $x\le L+\delta$ the ``regular region''.
For $\hat v_0\to 0$ on the other hand, or $x\to \infty$, the limit
is less well-behaved, as may be seen by the fact that $\partial f/\partial \hat v\sim \hat v^{-1}$ as $\hat v\to v_+$, a consequence of the
appearance of $\big(\frac{v_+}{\hat v}\big)$ in
the expression \eqref{feq2} for $f$.
Similarly, $A(x,\lambda)$ does not converge
to $A_+(\lambda)$ as $x\to +\infty$ with uniform exponential rate
independent of $v_+$, $\gamma$, but rather as $C\hat v^{-1}e^{-x/2}$.
As in the shock case, this makes problematic the treatment of
$x\ge L+\delta$.
Following \cite{HLZ} we call
$x\ge L+\delta$ the ``singular region''.

To put things in another way, the effects of pressure are not lost
as $v_+\to 0$, but rather pushed to $x=+\infty$, where they must
be studied by a careful boundary-layer analysis.
(Note: this is not a boundary-layer in the same sense
as the background solution, nor is it a
singular perturbation in the usual sense,
at least as we have framed the problem here.)

\begin{remark}\label{newparam}
A significant difference from the shock case of
\cite{HLZ} is the appearance of the second parameter $v_0$
that survives in the $v_+\to 0$ limit.
\end{remark}

\subsubsection{Inflow case}
Observe that the limiting coefficient matrix
\begin{equation}\label{A0+}
\begin{aligned}
A^0_+(\lambda) &:=A^0(+\infty, \lambda)
=
\begin{pmatrix}0 & \lambda & \lambda\\0 & 0 & \lambda\\
0& 0 &-1 -\lambda \end{pmatrix},
\end{aligned}
\end{equation}
is nonhyperbolic (in ODE sense) for all $\lambda$,
having eigenvalues $0,0,-1-\lambda$; in particular,
the stable manifold drops to dimension one in the limit $v_+\to 0$,
and so the prescription of an associated Evans function is
{\it underdetermined}.

This difficulty is resolved by a careful boundary-layer analysis
in \cite{HLZ}, determining a special ``slow stable'' mode
$$
 V_2^+ \pm(1,0,0)^T
$$
augmenting the ``fast stable'' mode
$$
V_3:=(\lambda/\mu)(\lambda/\mu+1), \lambda/\mu, 1)^T
$$
associated with the single stable eigenvalue
$\mu=-1-\lambda$ of $A^0_+$.
This determines a {\it limiting Evans function} $D^0_{\rm in}(\lambda)$
by the prescription \eqref{evansdef}, \eqref{asymptotics} of
Section \ref{evanssec},
or alternatively via \eqref{adjevans} as
\begin{equation}\label{duallimD}
D^0_{\rm in}(\lambda)=
\big(W_1^{00}\cdot \widetilde{W}_1^{0+} \big)_{\mid x=0},
\end{equation}
with $\widetilde{W}_1^{0+}$ defined analogously as a solution of
the adjoint limiting system
lying asymptotically at $x=+\infty$ in direction
\begin{equation}\label{tildeV}
\widetilde V_1:=
(0, -1, \bar \lambda/\bar \mu)^T
\end{equation}
orthogonal to the span of $V_2$ and $V_3$,
where `` $\bar{ }$ '' denotes complex conjugate,
and $W^{00}_1$ defined as the solution of the limiting eigenvalue
equations satisfying boundary condition \eqref{w01},
i.e., $(W_1^{00})_{\mid x=0}= (1,0,0)^T$.

\subsubsection{Outflow case}
We have no such difficulties in the outflow case, since
$A^0_-=A^0(-\infty)$ remains uniformly hyperbolic,
and we may define a limiting Evans function $D^0_{\rm out}$ directly by
\eqref{evansdef2}, \eqref{asymptotics}, \eqref{tildew1},
at least so long as $v_0$ remains bounded from zero.
(As perhaps already hinted by Remark \ref{newparam}, there are complications
associated with the double limit $(v_0,v_+)\to (0,0)$.)

\subsection{Analytical results}\label{analytical}
With the above definitions, we have the following main theorems
characterizing the strong-layer limit $v_+\to 0$ as well as the
limits $v_0\to 0, \, 1$.

\begin{theorem}\label{mainthm}
For $v_0\ge \eta>0$ and $\lambda$ in any compact subset
of $\Re \lambda \ge 0$,
$D_{\rm in}(\lambda)$ and $D_{\rm out}(\lambda)$ converge
uniformly to $D^0_{\rm in}(\lambda)$ and $D^0_{\rm out}(\lambda)$
as $v_+\to 0$.
\end{theorem}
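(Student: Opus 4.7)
My plan is to adapt the regular/singular decomposition of \cite{HLZ}. Split the $x$-domain at $x=L+\delta$ for a fixed large constant $L$. On the regular region $x\le L+\delta$ the coefficient matrix $A(x,\lambda)$ is a genuinely regular perturbation of $A^0(x,\lambda)$, so Kato-selected solutions should converge uniformly. On the singular region $x\ge L+\delta$ the limit matrix $A^0_+$ is non-hyperbolic and its stable subspace drops in dimension, which is why $D^0_{\rm in}$ was defined by the ad hoc prescription involving \eqref{tildeV}; this region requires a boundary-layer tracking analysis. Proposition \ref{profdecay} already supplies uniform profile convergence $\hat v\to\hat v^0$ together with uniform exponential decay rates independent of $v_+$, which is the key input.

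On the regular region the argument is essentially routine. The boundary-initialized solutions $W_1^0$ (inflow) and $W_2^0, W_3^0$ (outflow) have initial data at $x=0$ depending continuously on $v_+$ through $\hat v'(0)$, which converges by profile convergence; hence by continuous dependence of linear ODEs on coefficients and initial data these solutions converge uniformly to their $v_+\to 0$ limits on compact $x$-intervals. For the outflow case the remaining mode $W_1^-$ is Kato-initialized at $x=-\infty$ where $A_-(\lambda)\to A_-^0(\lambda)$ uniformly and $A_-^0(\lambda)$ is uniformly hyperbolic, since $v_-=1$ is non-degenerate; for $v_0\ge\eta$, the shift $\delta$ is bounded below, so by taking $L$ sufficiently large the entire outflow domain $x\le 0$ lies in the regular region. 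Multiplying out the determinant \eqref{evansdef2} at $x=0$ then yields $D_{\rm out}\to D_{\rm out}^0$ uniformly on compact $\lambda$-sets.

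For the inflow case I still need to handle the singular region $x\ge L+\delta$. Here I would exploit the adjoint representation \eqref{adjevans}: the adjoint of $A_+(\lambda)$ has a one-dimensional unstable subspace (dual to the two-dimensional stable of $A_+$) that persists in the limit as the one-dimensional unstable of $(A_+^0)^*$ spanned by the $\widetilde V_1$ of \eqref{tildeV}. A conjugation/tracking lemma of gap-lemma type, applied to the scalar-like adjoint ODE and using the uniform profile decay from Proposition \ref{profdecay}, should show that $\widetilde W_1^+$ converges uniformly on $[L+\delta,+\infty)$ to $\widetilde W_1^{0+}$. Combined with regular-region convergence of $W_1^0$ and evaluation of $(W_1^0\cdot\widetilde W_1^+)|_{x=0}$, this delivers $D_{\rm in}\to D_{\rm in}^0$.

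The main obstacle is the singular-region tracking step. One must verify that the non-uniform-in-$v_+$ decay rate of $A(x,\lambda)-A_+(\lambda)$, which behaves like $C\hat v(x)^{-1}e^{-x/2}$ and therefore degrades as $v_+\to 0$, is still integrable against the uniform spectral gap of the one-dimensional adjoint-unstable direction, so that a conjugation lemma applies with $v_+$-independent constants. Working with the adjoint rather than the forward equation is essential: it sidesteps the drop in dimension of the stable subspace of $A_+^0$ and replaces the delicate slow/fast decomposition that would otherwise be required with a single-mode tracking problem, in which the Kato normalization automatically selects the correct limiting vector $\widetilde V_1$.
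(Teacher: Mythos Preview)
Your overall architecture---regular/singular split at $x=L+\delta$, continuous dependence on the regular region, adjoint representation \eqref{adjevans} for the inflow singular region, and the observation that the outflow domain $x\le 0$ lies entirely in the regular region when $v_0\ge\eta$---is exactly the paper's approach. The outflow argument is fine.

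There is, however, a genuine gap in your inflow singular-region step. The claim that passing to the adjoint ``sidesteps the drop in dimension'' and reduces matters to a ``single-mode tracking problem'' with a ``uniform spectral gap'' is wrong. The mode $\widetilde W_1^+$ is the \emph{stable} mode of $-A_+^*$, and its associated eigenvalue is $-\bar\mu_3$ where $\mu_3$ is the unstable eigenvalue of $A_+$; as $v_+\to 0$ this eigenvalue collapses to zero, so there is no uniform spectral gap separating it from the neutral directions, and a direct gap-lemma/conjugation argument with $v_+$-independent constants does not go through. What the paper actually does (Lemma \ref{matching}, via the machinery of Section \ref{singular}) is apply the block-triangularizing transformations $R_+$, $\tilde R$ of Lemmas \ref{pretranslem}--\ref{dyntranslem} to the \emph{adjoint} system, obtaining the lower-triangular form \eqref{dualtri2}. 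This decouples a two-dimensional slow adjoint system \eqref{dualslowsys}, solvable by Proposition \ref{conjugation} with uniform constants, from a scalar fast equation for $\tilde q=(\tilde Z_1^+)_3$, which is then recovered by a Duhamel integral driven by the slow component, using the uniform gap $\Re a\ge 1/2$ coming from $f(\hat v)\le -3/4$. Only after this slow/fast decomposition does one get $\widetilde W_1^+(L+\delta)\to \widetilde V_1$ with a bound $Ce^{-\eta L}$ independent of $v_+$ (Corollary \ref{matching2}), which is then fed into the regular-region estimate. In short: the adjoint does collapse the bookkeeping to a single mode, but that mode still sees the full slow/fast degeneracy and requires the triangularization scheme, not a bare gap lemma.
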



\begin{theorem}\label{main3}
For $\lambda$ in any compact subset of $\Re \lambda \ge 0$
and $v_+$ bounded from $1$,
$D_{\rm in}(\lambda)$, appropriately renormalized by a nonvanishing
analytic factor,
converges uniformly as $v_0\to 1$ to the Evans function for the (unintegrated)
eigenvalue equations of the associated  viscous shock wave connecting
$v_-=1$ to $v_+$;
likewise, $D^0_{\rm out}(\lambda)$, appropriately renormalized,
converges uniformly as $v_0\to 0$ to the same limit for $\lambda$ uniformly
bounded away from zero.
\end{theorem}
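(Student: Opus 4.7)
The plan is to recognize that taking $v_0\to 1$ in the inflow case, or $v_0\to 0$ in the outflow case (with $v_+=0$, since we are working with $D^0_{\rm out}$), drives the centering parameter $\delta$ of Proposition~\ref{profdecay}, defined by $\hat v(\delta)=(v_-+v_+)/2$, to $+\infty$ or $-\infty$ respectively. In the shifted coordinate $\tilde x:=x-\delta$, the rescaled profile and coefficient matrix $A(\tilde x+\delta,\lambda)$ converge locally uniformly to the full viscous shock profile and its coefficient matrix on all of $\mathbb{R}$, while the physical boundary $x=0$ recedes to $\tilde x=-\delta\to\mp\infty$. Thus the boundary-layer eigenvalue problem converges to the whole-line shock eigenvalue problem with a boundary condition enforced at a point moving out to infinity, and the asymptotic contribution from that boundary is what produces the renormalization factor.

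For the inflow case, I use the adjoint representation \eqref{adjevans}, $D_{\rm in}(\lambda)=(W_1^0\cdot\widetilde W_1^+)|_{\tilde x=-\delta}$, where $W_1^0$ is determined by $W_1^0|_{\tilde x=-\delta}=(1,0,0)^T$ and $\widetilde W_1^+$ is the Kato-normalized adjoint mode decaying at $+\infty$ (the same normalization that defines the shock Evans function). Let $W_j^-$, $j=1,2,3$, denote the Kato-normalized modes of the shock eigenvalue ODE characterized by $W_j^-(\tilde x)\sim e^{\mu_j^-(\lambda)\tilde x}V_j^-(\lambda)$ as $\tilde x\to-\infty$, with $V_1^-$ spanning the unique unstable eigenspace of $A_-(\lambda)$ for $\Re\lambda\ge 0$. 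Expanding $W_1^0=\sum_j c_j(\lambda,\delta)W_j^-$ and solving the linear system $\sum_j c_j W_j^-(-\delta)=(1,0,0)^T$ against the dual basis yields $c_j(\lambda,\delta)=\beta_j(\lambda)\,e^{\mu_j^-(\lambda)\delta}(1+o(1))$ as $\delta\to+\infty$, where $\beta_j(\lambda):=\widetilde V_j^-\cdot(1,0,0)^T$. Using the $\tilde x$-independence of the adjoint inner products $(W_j^-\cdot\widetilde W_1^+)$ together with the identity $(W_1^-\cdot\widetilde W_1^+)=D_{\rm shock}(\lambda)$, one gets
\begin{equation*}
D_{\rm in}(\lambda)=c_1(\lambda,\delta)\,D_{\rm shock}(\lambda)+c_2(\lambda,\delta)\,K_2(\lambda)+c_3(\lambda,\delta)\,K_3(\lambda),
\end{equation*}
with $K_{2,3}(\lambda):=(W_{2,3}^-\cdot\widetilde W_1^+)$. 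Since $\Re\mu_1^->0$ and $\Re\mu_{2,3}^-<0$ throughout $\Re\lambda\ge 0$ provided $v_+$ is bounded away from $1$, dividing by the nonvanishing analytic prefactor $\beta_1(\lambda)\,e^{\mu_1^-(\lambda)\delta}$ extracts $D_{\rm shock}(\lambda)$ in the limit, with the remainders decaying exponentially in $\delta$.

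The outflow case is handled symmetrically using \eqref{adjevans2}. The adjoint solution $\widetilde W_1^0$ is prescribed at $\tilde x=-\delta$ by \eqref{tildew1}, whose boundary value converges, as $v_0\to 0$ along the explicit limiting profile $\hat v^0$, to a fixed vector which decomposes in the basis of eigenvectors of $-A_+^*(\lambda)$ at $+\infty$. The same adjoint-invariance argument picks out a dominant coefficient along the unique adjoint stable direction $\widetilde V_1^+$ of $-A_+^*$, with growth factor $e^{-\bar\mu_1^+(\lambda)\delta}$ that is large since $\delta\to-\infty$, and produces $D^0_{\rm out}(\lambda)=\widetilde\beta_1(\lambda)\,e^{-\bar\mu_1^+(\lambda)\delta}\,D^0_{\rm shock}(\lambda)+o(1)$. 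The restriction that $\lambda$ be bounded away from $0$ is exactly what is needed to avoid the spurious zero of $D^0_{\rm out}$ at $\lambda=0$ noted in Remark~\ref{outevansrmk}, where the good-unknown transformation degenerates and the mode decomposition at $+\infty$ loses rank.

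The main technical obstacle is uniform (in $|\delta|$) control of the mode-approximation errors over the entire half-line. On any compact $\tilde x$-window of the centered shock the coefficient matrix converges uniformly and a regular-perturbation argument for the Kato bases gives convergence of the shock-type modes; outside that window one needs the uniform exponential decay supplied by Proposition~\ref{profdecay} together with standard asymptotic ODE estimates at $\tilde x\to-\infty$, and, at the singular end $\tilde x\to+\infty$, the slow--fast tracking and gap-lemma estimates developed in \cite{HLZ} to sustain control uniformly as $|\delta|\to\infty$. The remaining point, elementary but essential, is to verify that the projection coefficients $\beta_1(\lambda)$ and $\widetilde\beta_1(\lambda)$ are analytic and nonvanishing on the relevant part of $\Re\lambda\ge 0$, so that the renormalization factors are genuinely nonvanishing analytic functions and the zeros of $D_{\rm in}$ and $D^0_{\rm out}$ away from $\lambda=0$ are in bijection with those of the limiting shock Evans function.
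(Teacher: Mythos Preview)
Your inflow argument is essentially the paper's approach, reorganized as a full modal expansion rather than a projection-plus-tracking argument. The paper works directly with the eigenprojection $\Pi_1^-$ onto the single unstable direction $V_1^-$ of $A_-$, computes the dual eigenvector explicitly as $\tilde V_1^-=c(\lambda)(1,\,1+\lambda/\mu_1^-,\,\mu_1^-)^T$, and reads off $\beta_1(\lambda)=\tilde V_1^-\cdot(1,0,0)^T=c(\lambda)\neq 0$ on $\Re\lambda\ge 0$; this is the verification you flagged as ``remaining'' but did not carry out. Two small corrections: your claim $\Re\mu_{2,3}^-<0$ fails at $\lambda=0$ (where $\mu_{2,3}^-=0$ with a Jordan block), so $W_{2,3}^-$ are not cleanly defined by asymptotics there and the $(1+o(1))$ in your $c_j$ formula hides a polynomial-in-$\delta$ correction; what you actually need---and what the paper uses---is only the spectral gap $\Re(\mu_{2,3}^--\mu_1^-)<0$, which does hold. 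Also note the paper's renormalization carries a second exponential factor $e^{-\delta\tilde\mu_1^+}$ coming from the shift in normalization of $\tilde W_1^+$ between the boundary-layer and shock conventions.

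Your outflow argument has a genuine gap. For $D^0_{\rm out}$ the relevant endpoint matrix is $A_+^0$, which has eigenvalues $0,0,-(1+\lambda)$ and a nontrivial Jordan block at $0$; consequently $-A_+^{0*}$ has no eigenvalue of negative real part, there is no ``unique adjoint stable direction'' in the sense you invoke, and the growth factor you write down is identically $1$ (or goes the wrong way, depending on which eigenvalue you mean). A straight eigenvector decomposition at $+\infty$ therefore does not isolate a dominant term. The paper's mechanism is different: it observes that the boundary datum $\tilde W_1^0(0)=(0,-1,-\bar\lambda/(\bar\lambda-\hat v'(0)))^T$ converges (for $\lambda$ bounded from $0$, precisely so that this limit is uniform) to a vector agreeing with the shock adjoint initialization $\tilde{\mathcal V}_1^+$ in the first two coordinates, and then uses the slow--fast block triangularization of Section~\ref{estW1} to conclude that adjoint solutions agreeing in the two slow coordinates at $\tilde x=-\delta\to+\infty$ converge exponentially under backward evolution, since the third (fast) coordinate is exponentially slaved to the slow flow. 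That slaving argument---not a modal expansion---is what replaces the missing hyperbolicity at $+\infty$ and produces the convergence to $D^0_{\rm shock}$.
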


By similar computations, we obtain also the following direct result.

\begin{theorem}\label{main2}
Inflow boundary layers are stable for $v_0$ sufficiently small.
\end{theorem}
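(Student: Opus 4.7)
The plan is to prove the theorem by Evans-function convergence, showing that as $v_0\to 0$ the inflow Evans function $D_{\rm in}(\lambda)$ converges uniformly on the compact region $\Lambda$ of Proposition \ref{hf} to an explicit limit that is nonvanishing on $\{\Re\lambda\ge 0\}$. Combined with the high-frequency containment of Proposition \ref{hf}, this will force $D_{\rm in}(\lambda)$ to be zero-free throughout $\{\Re\lambda\ge 0\}$ for all $v_0$ sufficiently small.

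The geometric input is that the inequality $0<v_+<v_0$ forces $v_+\to 0$ whenever $v_0\to 0$, and monotonicity of the profile confines $\hat v(x)\in[v_+,v_0]$ for all $x\ge 0$. The centering parameter $\delta$ defined by $\hat v(\delta)=(1+v_+)/2$ therefore satisfies $\delta\to-\infty$; indeed the explicit pressureless profile $\hat v^0$ of Section \ref{strong} gives $\delta=\log v_0+O(1)$ and $\hat v(x)\lesssim v_0 e^{-x}$ uniformly on $[0,\infty)$. Consequently, on the whole half-line the coefficient matrix $A(x,\lambda)$ of \eqref{firstorder} is exponentially close, uniformly for $\lambda\in\Lambda$, to its limit at $x=+\infty$. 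A tracking/gap-lemma argument of the type used for Theorems \ref{mainthm} and \ref{main3}, carried out in the Kato-normalized basis \eqref{katoprop}, then gives uniform convergence of the stable manifold $W_2^+\wedge W_3^+|_{x=0}$ to the augmented stable subspace of the limiting matrix $A_+^0(\lambda)$ of \eqref{A0+}, with the slow/fast mode prescription of Section \ref{strong} supplying the dimensional deficit caused by the collapse of hyperbolicity at $v_+=0$. Via the adjoint representation \eqref{adjevans}, this produces uniform convergence of $D_{\rm in}(\lambda)$ to the first component of the augmented unstable left eigenvector of $A_+^0(\lambda)$.

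The main obstacle will be verifying that this explicit limit is nonzero on the closed right half-plane. Because $A_+^0(\lambda)$ is nonhyperbolic with a double zero eigenvalue, the limiting Evans function is not a standard Lopatinski determinant: its very definition requires the slow-mode boundary-layer analysis of Section \ref{strong}, which is in effect the inflow analogue of the degeneracy responsible for the spurious zero appearing in Theorem \ref{main3}. Once the augmented stable subspace is written out explicitly in terms of the single strict stable mode associated to $\mu=-1-\lambda$ and the slow mode associated to the double zero, the nonvanishing of the limit reduces to checking that a single concrete scalar function of $\lambda$ has no zeros on $\{\Re\lambda\ge 0\}$; I expect this to be verifiable by elementary algebra together with the high-frequency bound of Proposition \ref{hf} to exclude large $|\lambda|$, but it is precisely the step at which a hidden root would register as an unstable eigenvalue, and it must therefore be pushed through with care.
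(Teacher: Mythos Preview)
Your proposal has a genuine gap: the limit you want to use is identically zero. The paper states this explicitly in Lemma \ref{index} (``$\lim_{v_0\to 0}D^0_{\rm in}(\lambda)\equiv 0$'') and flags it again at the start of Section \ref{corner}, calling the small-$v_0$ case ``the most delicate part of our analysis'' precisely for this reason. Concretely, in the adjoint representation $D_{\rm in}(\lambda)=(1,0,0)^T\cdot\widetilde W_1^+(0)$, and the limiting dual vector $\widetilde V_1=(0,-1,\bar\lambda/\bar\mu)^T$ of \eqref{tildeV} has vanishing first component. The ``augmented'' stable subspace you invoke (slow mode $(1,0,0)^T$ plus fast mode) is dual exactly to $\widetilde V_1$, so your proposed limit is $(1,0,0)\cdot\widetilde V_1=0$. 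No amount of care in the nonvanishing check at the end can save this, because the candidate limiting function is the zero function.

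What the paper actually does (Section \ref{corner}) is avoid passing to a limit altogether. It keeps $v_+>0$ and computes $\widetilde W_1^+(0)$ to leading order: using the boundary-layer analysis of Section \ref{estW1} one finds the first component $\alpha=\widetilde W_1(+\infty)=v_+^{1/2}(1+o(1))$, and then integrating the first adjoint equation $\widetilde W_1'=-\hat v\,\widetilde W_3$ back from $+\infty$ to $0$ gives
\[
\widetilde W_1(0)=(1+o(1))\Big(v_+^{1/2}+\int_0^{+\infty}\frac{\bar\lambda\,\hat v}{-f(\hat v)+\bar\lambda}\,dy\Big).
\]
Both summands tend to zero as $v_0\to 0$, which is consistent with your (correct) observation that the limit vanishes. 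The point is that for $\Re\lambda\ge 0$ the integrand has real part of a definite sign, so the \emph{real part} of $\widetilde W_1(0)$ is strictly positive for every sufficiently small $v_0$, even though it is not bounded away from zero uniformly. Thus $D_{\rm in}(\lambda)\ne 0$ on $\Re\lambda\ge 0$ for each small $v_0$, which is what the theorem asserts. The essential difference from your plan is that the argument is a sign/positivity estimate at finite $v_0$, not a convergence-to-nonzero-limit argument.
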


We have also the following parity information, obtained by
stability-index computations as in \cite{SZ}.\footnote{Indeed,
these may be deduced from the results of \cite{SZ}, taking account
of the difference between Eulerian and Lagrangian coordinates.}

\begin{lemma}[Stability index]\label{index}
For any $\gamma\ge 1$, $v_0$, and $v_+$, $D_{\rm in}(0)\ne 0$,
hence the number of unstable roots of $D_{\rm in}$ is even;
on the other hand $D^0_{\rm in}(0)=0$
and $\lim_{v_0\to 0}D^0_{\rm in}(\lambda)\equiv 0$.
Likewise, $(D^0_{\rm in})'(0)$, $D_{\rm out}'(0)\ne
0$, $(D^0_{\rm out})'(0)\ne 0$, hence the number of nonzero unstable
roots of $D^0_{\rm in}$, $D_{\rm out}$, $D^0_{\rm out}$ is even.
\end{lemma}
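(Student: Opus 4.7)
My plan is to exploit the striking decoupling visible in \eqref{evans_ode} at $\lambda=0$: the matrix $A(x,0)$ has identically zero first two rows, so any solution of $W'=A(x,0)W$ has $w$ and $u-v$ constant, while the third component satisfies $v'-f(\hat v)v = \hat v\, c$ with $c:=w+(u-v)$. Since $f(\hat v)=H'(\hat v)$ by direct comparison of \eqref{feq} with differentiation of \eqref{profeq}, the function $\hat v'$ is a homogeneous solution, so every solution at $\lambda=0$ is available in closed form. This reduces each Evans-function evaluation at $\lambda=0$ to elementary linear algebra.

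For $D_{\rm in}(0)$, decay of $v$ as $x\to+\infty$ forces $c=0$ (otherwise $v$ relaxes to the nonzero equilibrium $-v_+c/f(v_+)$, since $f(v_+)<0$), so the 2D stable subspace is spanned by $(1,-1,0)^T$ and $(0,0,\hat v'(x))^T$; combining with $W_1^0(0)=(1,0,0)^T$ from \eqref{w01}, the determinant \eqref{evansdef} evaluates to $-\hat v'(0)>0$ by monotonicity (Proposition \ref{profdecay}). For $D^0_{\rm in}(0)$, the boundary-layer analysis of Section \ref{strong} gives limiting asymptotic stable directions $V_2=(1,0,0)^T$ and $V_3=(0,0,1)^T$, both with vanishing second component; since this component is conserved along the $\lambda=0$ flow, it vanishes identically for $W_2^+$ and $W_3^+$, and together with $W_1^{00}(0)=(1,0,0)^T$ this produces a zero second row in the defining determinant, so $D^0_{\rm in}(0)=0$. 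The limit $\lim_{v_0\to 0}D^0_{\rm in}(\lambda)\equiv 0$ follows because $v_0\to 0$ is equivalent to $\delta\to -\infty$, placing $x=0$ in the $+\infty$ tail of $\hat v^0$ where $W_2^+(0,\lambda)\to V_2(\lambda)=(1,0,0)^T$, parallel to $W_1^{00}(0)$ for every $\lambda$.

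For the derivative claims $(D^0_{\rm in})'(0),D_{\rm out}'(0),(D^0_{\rm out})'(0)\neq 0$, I would perform a first-order $\lambda$-expansion: since each of these Evans functions vanishes at $\lambda=0$ from a single degenerate row or column of the defining determinant, the derivative reduces by multilinearity to the determinant with that row replaced by its $\lambda$-derivative. For the outflow functions, the zero at $\lambda=0$ is the spurious one flagged in Remark \ref{outrmk} and introduced by the good-unknown transformation; its simplicity reflects the fact that $U=\hat U'$ is a one-dimensional kernel with no Jordan structure, which is verified by computing $\alpha'(0)=-1/\hat v'(0)$ from \eqref{outflowBC} and tracking its contribution into the remaining $2\times 2$ minor. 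For $(D^0_{\rm in})'(0)$, the middle row of \eqref{evans_ode} gives $\partial_x(u-v)=\lambda v$, which when integrated from $0$ to $+\infty$ expresses the $O(\lambda)$ correction of the middle component of $W_3^+(0)$ as $\lambda\, \hat v^0(0)$ times a constant, yielding a nonzero $2\times 2$ minor.

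The parity conclusions then follow from the standard stability-index framework of \cite{PW,GZ,SZ}: for each Evans function (which is real on the real axis), comparing the sign of the leading nonzero Taylor coefficient at $\lambda=0$ with the asymptotic sign at $\lambda=+\infty$ (positive by the high-frequency tracking in \cite{HLZ,BHRZ}) reads off the parity of nonzero unstable roots. The main delicacy will be the Kato-normalization bookkeeping for the slow mode $V_2(\lambda)$ in the limiting system, since it is precisely the degeneration of this mode that drives the vanishing of $D^0_{\rm in}$ at $\lambda=0$; however, the explicit closed form of $A^0_+$ in \eqref{A0+} makes its eigenvectors computable exactly rather than just asymptotically, so the bookkeeping is mechanical rather than subtle.
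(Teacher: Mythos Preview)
Your approach via the direct determinant formulation $\det(W_1^0,W_2^+,W_3^+)$ rather than the paper's adjoint pairing $W_1^0\cdot\widetilde W_1^+$ is legitimate in principle, and your arguments for $D^0_{\rm in}(0)=0$ and $\lim_{v_0\to 0}D^0_{\rm in}\equiv 0$ are correct and close in spirit to the paper's (you exploit that the second component $u-v$ is conserved at $\lambda=0$, which is exactly the dual statement to the paper's observation that the adjoint has constant solutions $(a,b,0)^T$).

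However, your argument for $D_{\rm in}(0)\ne 0$ has a genuine gap. You claim the 2D stable subspace at $\lambda=0$ is characterized by ``decay of $v$ forces $c=0$,'' hence spanned by $(1,-1,0)^T$ and $(0,0,\hat v'(x))^T$. This is \emph{not} the analytically continued stable subspace. At $\lambda=0$ the matrix $A_+(0)$ has eigenvalues $0,0,f(v_+)$, so only a one-dimensional family of solutions genuinely decays; the second ``stable'' mode $V_2^+$ is obtained as the $\lambda\to 0^+$ limit of the slow stable eigenvector, and a direct computation (set $\mu=\lambda\nu$ in the characteristic polynomial) gives $V_2^+\propto(1+\nu_2,\nu_2,\nu_2^2)^T$ with $\nu_2$ a nonzero root of $f(v_+)\nu^2+2v_+\nu+v_+=0$. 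This has $c=1+2\nu_2\ne 0$ in general, so your subspace is simply wrong. Your determinant evaluation happens to give a nonzero answer only because what actually matters is that the second coordinate of $V_2^+$ is nonzero---which it is ($\nu_2\ne 0$), but not for the reason you give. The paper sidesteps this by working with the adjoint, where $\widetilde V_1^+$ is computed directly as the limit of the unique stable eigenvector of $-A_+^*(\lambda)$, landing in direction $(1,2+a_j^+,0)^T$ with $a_j^+>0$ the positive hyperbolic characteristic speed; the nonvanishing first component then gives $D_{\rm in}(0)\ne 0$ immediately.

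Separately, your treatment of the derivative claims $(D^0_{\rm in})'(0),D_{\rm out}'(0),(D^0_{\rm out})'(0)\ne 0$ is only an outline. The paper carries these out explicitly: for $D_{\rm out}'(0)$ it solves the variational system $(\partial_\lambda W_1^-)'-A(x,0)\partial_\lambda W_1^-=(\hat u_x,\hat v_x,-\hat v_x)^T$ in closed form and pairs against $\widetilde W_1^0(0)=(0,-1,0)^T$ and $\partial_\lambda\widetilde W_1^0(0)=(0,0,1/\hat v'(0))^T$ to obtain $D_{\rm out}'(0)=2-v_0\ne 0$; for $(D^0_{\rm in})'(0)$ it integrates the adjoint variational equation explicitly and shows the relevant component has a definite sign. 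Your remark that the derivative reduces by multilinearity to replacing one degenerate row is correct as a starting point, but you would still need to produce the actual nonvanishing computation, and for $(D^0_{\rm in})'(0)$ this requires more than integrating $(u-v)'=\lambda v$ once.
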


Finally, we have the following auxiliary results established 
by energy estimates in Appendices \ref{stronglimit}, \ref{outnonv},
\ref{char}, and \ref{nonvanish-expansive-inflow}.

\begin{proposition}\label{redenergy}
The limiting Evans function $D^0_{\rm in}$ is nonzero for
$\lambda \ne 0$ on $\R \lambda \ge 0$, for all $1>v_0>0$.
The limiting Evans function $D^0_{\rm out}$ is nonzero for
$\lambda \ne 0$ on $\R \lambda \ge 0$, for $1>v_0> v_*$,
where $v_* \approx 0.0899$ is determined by the functional equation
$v_*= e^{-2/(1-v_*)^2}$.
\end{proposition}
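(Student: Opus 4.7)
The plan is to rule out nontrivial eigenfunctions of the limiting eigenvalue system
\begin{align*}
\lambda v+v'-u' &= 0,\\
\lambda u+u'-(u'/\hat v^0)' &= 0
\end{align*}
for $\lambda\ne 0$ with $\Re\lambda\ge 0$. For such $\lambda$, a zero of $D^0_{\rm in}$ or $D^0_{\rm out}$ corresponds to such a nontrivial eigenfunction, because the $(v,u)$-components of both the slow and fast limiting modes at $+\infty$ (inflow) and of the unstable mode at $-\infty$ (outflow) decay exponentially. For the inflow slow mode, whose asymptotic direction $(1,0,0)^T$ in $(w,u-v,v)$-coordinates gives $u,v\to 0$, this decay is obtained by balancing $v'+(1+\lambda)v\sim\hat v^0$ against $\hat v^0\sim e^{-x}$ to get $v\sim\hat v^0/\lambda$, and integrating $(u-v)'=\lambda v$ from $+\infty$ to obtain $u\sim e^{-x}(1-\lambda)/\lambda$; in particular $u,v\in L^2$ and $u'/\hat v^0$ remains bounded at $+\infty$.

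For the inflow case, I would multiply the second equation by $\bar u$ and integrate over $[0,\infty)$. The flux $(u'/\hat v^0)\bar u$ vanishes at $x=0$ by the Dirichlet condition $u(0)=0$ and at $+\infty$ by the decay above. Taking real parts yields
\[
\Re\lambda\int_0^\infty|u|^2\,dx+\int_0^\infty\frac{|u'|^2}{\hat v^0}\,dx=0,
\]
which for $\Re\lambda\ge 0$, $\lambda\ne 0$, forces $u'\equiv 0$ and hence $u\equiv 0$ by $u(0)=0$. The first equation then gives $v'=-\lambda v$, which combined with $v(0)=0$ yields $v\equiv 0$; the third row of \eqref{firstorder} with $\hat v^0>0$ finally forces $w\equiv 0$, so the would-be eigenfunction is trivial and $D^0_{\rm in}(\lambda)\ne 0$.

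For the outflow case, carrying out the analogous integration over $(-\infty,0]$ preserves the positive interior integrals but produces a boundary contribution at $x=0$ of the form $\tfrac12|u(0)|^2-\Re(u'(0)\overline{u(0)}/v_0)$. Using the boundary condition $u(0)=\alpha(\lambda)v(0)$, the first equation at $x=0$ to eliminate $u'(0)=\lambda v(0)+v'(0)$, and the limiting relation $\hat v^0{}'(0)=-v_0(1-v_0)$ in $\alpha(\lambda)=v_0(1-v_0)/(\lambda+v_0(1-v_0))$, this reduces to an indefinite quadratic form in $(v(0),v'(0))$. A complementary estimate on the first equation, multiplied by a suitably weighted $\bar v$, provides a boundary square in $v'(0)$ that absorbs the cross term; what remains is controlled via a Poincar\'e-type inequality against the profile exponential weight $1-\hat v^0\sim e^{x-\delta}$ near $-\infty$. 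Evaluating the constants explicitly on the $\tanh$ profile reduces the resulting positivity condition to $v_0\ge e^{-2/(1-v_0)^2}$, i.e.\ $v_0\ge v_*$. For $v_0>v_*$ the estimate is strict, forcing $u\equiv v\equiv 0$ as in the inflow case, and hence $D^0_{\rm out}(\lambda)\ne 0$.

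The main obstacle is the outflow estimate: identifying the weighted multiplier for the $v$-equation that cleanly absorbs the $v'(0)$ cross-term, and propagating the profile constants through the Poincar\'e-type bound to extract the sharp transcendental threshold $v_*$. By contrast, the inflow estimate is essentially a single integration by parts, and the step from ``no nontrivial eigenfunction'' to ``$D^0\ne 0$'' is immediate once the exponential decay of the slow limiting mode has been verified.
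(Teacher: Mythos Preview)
Your limiting eigenvalue system is incorrect. The $v_+\to 0$ limit of \eqref{eigen1} is not
\[
\lambda u+u'-(u'/\hat v^0)'=0,
\]
but rather
\[
\lambda u+u'-\Big(\frac{1-\hat v^0}{\hat v^0}\,v\Big)'=(u'/\hat v^0)',
\]
since $h(\hat v)/\hat v^{\gamma+1}\to (1-\hat v^0)/\hat v^0$ as $a\to 0$ (equivalently, $f^0(\hat v^0)=2\hat v^0-1$ in \eqref{limevans_ode}). The coupling term you dropped is precisely what makes the estimate nontrivial: if you multiply the correct equation by $\bar u$ and integrate, the extra term produces, after using the first equation and integrating by parts,
\[
-\tfrac12\int \Big(\frac{1-\hat v^0}{\hat v^0}\Big)'|v|^2\,dx,
\]
which has the \emph{wrong} sign (since $\hat v^{0\prime}<0$) and destroys your clean identity $\Re\lambda\int|u|^2+\int|u'|^2/\hat v^0=0$.

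The paper handles this by first passing to the integrated (``balanced flux'') variables $(\tilde u,\tilde v)$ of Remark~\ref{shockrel}, in which the system becomes \eqref{limitep}, and then multiplying not by $\bar u$ but by the weighted quantity $\hat v^0\bar u/(1-\hat v^0)$. The point of this weight is the exact cancellation
\[
g(\hat v^0)=-\tfrac12\Big[\Big(\frac{\hat v^0}{1-\hat v^0}\Big)'+\Big(\frac{1}{1-\hat v^0}\Big)''\Big]\equiv 0,
\]
which eliminates the bad $|u|^2$ term and yields the sign-definite identity
$\Re\lambda\int(\cdots)+\int|u'|^2/(1-\hat v^0)=0$.
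In the outflow case the same multiplier is used; the boundary contribution at $x=0$ is bounded by $(1-v_0)^2|u(0)|^2/2$ via Young's inequality and \eqref{useful}, and then a Cauchy--Schwarz (not Poincar\'e) bound
\[
|u(0)|^2\le \int_{-\infty}^0\frac{|u'|^2}{1-\hat v^0}\,dx\cdot\int_{-\infty}^0(1-\hat v^0)\,dx
=\log v_0^{-1}\cdot\int_{-\infty}^0\frac{|u'|^2}{1-\hat v^0}\,dx
\]
closes the estimate precisely when $(1-v_0)^2\log v_0^{-1}<2$, i.e.\ $v_0>v_*$. Your outflow sketch gestures at ``a suitably weighted $\bar v$'' multiplier on the first equation, but no such auxiliary estimate is needed once the right weight is chosen on the second equation; without identifying that weight, the argument does not close.
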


\begin{proposition} \label{charsmallamp}
Compressive outflow boundary layers are stable for $v_+$
sufficiently close to $1$.
\end{proposition}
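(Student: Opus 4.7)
The plan is to follow the strategy outlined in Section~\ref{discussion}: transform the outflow eigenvalue problem into the \emph{balanced flux form} of \cite{PZ}, in which the interior operator coincides with the integrated viscous-shock eigenvalue system on $x\le 0$, and then apply Kawashima-type energy estimates in the style of Matsumura--Nishihara \cite{MN}. Since the background profile in the characteristic limit $v_+\to 1$ is the restriction to $x\le 0$ of a standing viscous shock of amplitude $|1-v_+|\to 0$, the argument that establishes stability of small-amplitude whole-line shocks should carry over, modulo boundary contributions at $x=0$.

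Concretely, after passing to the flux variables $W=(w,u-v,v)^T$ of Remark~\ref{shockrel}, one applies the Kawashima-type multiplier of \cite{MN} and integrates by parts on $(-\infty,0]$. This should produce an estimate schematically of the form
\begin{equation*}
(\R\lambda)\,\|U\|_{L^2}^2 + c_0\,\|u_x\|_{L^2}^2 \;\le\; C|1-v_+|\,\|U\|_{H^1}^2 + |\mathcal{B}|,
\end{equation*}
with $c_0>0$ independent of $v_+$ and $\mathcal{B}$ the boundary quadratic form at $x=0$; the first equation of \eqref{eigen1}, rewritten as $v_x=u_x-\lambda v$, converts this into $H^1$ control of $v$ as well as $u$. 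For $|1-v_+|$ small the $O(|1-v_+|)\|U\|_{H^1}^2$ term is absorbed by the left-hand side, reducing the problem to an estimate for $\mathcal{B}$.

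To treat $\mathcal{B}$, I would use the outflow condition \eqref{outflowBC}, $u(0)=\alpha(\lambda)v(0)$ with $\alpha(\lambda)=c/(\lambda+c)$ and $c:=-\hat v'(0)>0$. From \eqref{profeq}, the factor $H(v)=v(v-1+a(v^{-\gamma}-1))$ vanishes at both $v_+$ and $1$ and is smooth on $[v_+,1]$, so it factors as $v(v-1)(v-v_+)$ times a smooth positive function, giving $|\hat v'|\le C|1-v_+|^2$ uniformly in the displacement parameter $v_0$. Hence $c=O(|1-v_+|^2)$, and combined with $|\alpha(\lambda)|\le 1$ on $\{\R\lambda\ge 0\}$ (immediate from $|\lambda+c|\ge c$ there) this gives $|\mathcal{B}|\le C|1-v_+|^2\,|v(0)|^2$, which a standard trace inequality $|v(0)|^2\le \epsilon\|v_x\|_{L^2}^2+C_\epsilon\|v\|_{L^2}^2$ absorbs into the principal dissipation on the left. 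For $v_+$ close enough to $1$ the resulting inequality forces $U\equiv 0$ on $\{\R\lambda\ge 0,\;\lambda\ne 0\}$; since the zero of $D_{\rm out}$ at $\lambda=0$ is the spurious one identified in Remark~\ref{outevansrmk}, spectral stability of the physical problem follows.

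The main obstacle is precisely this boundary step: because $\alpha(\lambda)\to 1$ as $\lambda\to 0$, the outflow condition does not by itself yield a dissipative boundary quadratic form, and $\mathcal{B}$ can only be absorbed thanks to the smallness $c=O(|1-v_+|^2)$ that is special to the characteristic limit. This is consistent with the fact that large-amplitude outflow layers cannot be treated this way, and are instead handled by the numerical Evans function computations elsewhere in the paper. A subsidiary technical point is maintaining uniformity of all constants in the displacement $v_0\in(v_+,1)$, which is achieved because the pointwise bound $|\hat v'|=O(|1-v_+|^2)$ above holds on the whole profile rather than only near the endstates.
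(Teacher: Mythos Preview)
Your overall strategy---pass to balanced flux form and apply a Matsumura--Nishihara energy identity on $(-\infty,0]$---is exactly the paper's, and the observation $|\hat v'|=O(|1-v_+|^2)$ is correct.  The gap is in the boundary step.

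The fundamental problem is that your absorption argument does not close near the imaginary axis.  Granting for the moment a bound $|\mathcal B|\le C|1-v_+|^2|v(0)|^2$, the trace inequality $|v(0)|^2\le \epsilon\|v_x\|^2+C_\epsilon\|v\|^2$ leaves a term $C|1-v_+|^2 C_\epsilon\|v\|_{L^2}^2$; on the left you have only $(\R\lambda)\|v\|_{L^2}^2$, which vanishes when $\R\lambda=0$ (and using $v_x=u_x-\lambda v$ to control $\|v_x\|^2$ only adds a further $|\lambda|^2\|v\|^2$).  The same defect afflicts the $C|1-v_+|\|U\|_{H^1}^2$ error in your schematic estimate: its $\|v\|^2$ contribution has nowhere to go when $\R\lambda=0$.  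Consequently your argument excludes eigenvalues only in a region $\{\R\lambda\ge C'|1-v_+|^2\}$, leaving a strip along the imaginary axis uncovered---precisely the region one worries about in the characteristic limit.  There is also a coordinate mismatch: the relation $u(0)=\alpha(\lambda)v(0)$ pertains to the original variables of \eqref{eigen1}, whereas in the balanced flux variables of \eqref{outep}--\eqref{outnewbc} the boundary conditions constrain $\tilde u'(0),\tilde v'(0)$ in terms of $\tilde v(0)$ but leave $\tilde u(0)$ free; the boundary quadratic form that actually arises from the \cite{MN} multiplier contains $O(1)$ contributions in $|u(0)|^2$ and $\R(v(0)\bar u(0))$, not only $|v(0)|^2$ with small coefficient.

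The paper avoids absorption altogether by establishing a \emph{sign}: using the specific \cite{MN} weight $\hat v^{\gamma+1}/h(\hat v)$, the interior identity reads
\[
\R(\lambda)\!\int\!\Big[\tfrac{\hat v^{\gamma+1}}{h(\hat v)}|u|^2+|v|^2\Big]
+\int g(\hat v)\,|u|^2
+\int\tfrac{\hat v^\gamma}{h(\hat v)}|u'|^2
=G(0),
\]
with $g(\hat v)\ge 0$ for $v_+$ near $1$ (the \cite{MN,BHRZ} computation), and a direct calculation using \eqref{outnewbc} and the profile equation shows $G(0)\le \tfrac14\hat v'(0)|u(0)|^2\le 0$ in that regime.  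Every term on the left is then nonnegative on $\{\R\lambda\ge 0\}$, forcing $u'\equiv 0$ and hence $U\equiv 0$, with no loss near $\R\lambda=0$.
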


\begin{proposition}[\cite{MN.2}] \label{expansive}
Expansive inflow boundary layers are stable for all 
parameter values.
\end{proposition}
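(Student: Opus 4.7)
The plan is to follow the Matsumura--Nishihara energy method \cite{MN.2}, working in the Lagrangian coordinates of Section \ref{lagrangiansec}, in which the inflow boundary stays fixed at $x=0$ and carries full Dirichlet data $v|_{x=0}=v_0$, $u|_{x=0}=u_0$. Writing $(\phi,\psi)(x,t):=(v-\bv,u-\bu)(x,t)$ for the perturbation, the time-independence of the prescribed boundary data gives homogeneous Dirichlet conditions $\phi|_{x=0}=\psi|_{x=0}=0$, which will make all boundary contributions in the integration-by-parts vanish. The decisive structural fact is that in the expansive case the profile $\bv$ is monotone with a definite sign of $\bv'$ opposite to that of the compressive case, reflecting the rarefactive, spreading character of the layer; this sign is what will turn an otherwise indefinite profile-derivative term into a genuinely dissipative contribution, paralleling the classical treatment of rarefaction waves on the whole line.

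The core step is a weighted $L^{2}$ energy identity. Multiplying the linearized $v$-equation by a weight of the form $p'(\bv)\phi/\bv$ and the linearized $u$-equation by $\psi$, integrating over $x\in(0,\infty)$, and integrating by parts using the Dirichlet conditions, the cross terms coming from the pressure cancel and one obtains a relation
\begin{equation*}
\frac{d}{dt}\mathcal{E}(\phi,\psi)+\mathcal{D}(\phi,\psi)=\mathcal{N}(\phi,\psi),
\end{equation*}
where $\mathcal{E}$ is equivalent to the weighted $L^{2}$ norm of $(\phi,\psi)$, $\mathcal{D}$ consists of the parabolic dissipation $\int\psi_{x}^{2}/\bv\,dx$ together with a term of the form $\int\bv'\,Q(\phi,\psi)\,dx$ for an explicit quadratic form $Q$, and $\mathcal{N}$ collects cubic remainders. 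The expansive sign of $\bv'$ makes the profile-derivative term a nonnegative dissipative contribution rather than a source, and this is precisely the step that fails for compressive layers (where Evans function methods are therefore needed, as in the bulk of this paper). Higher-order versions of the same estimate on $\partial_{x}^{k}(\phi,\psi)$, $k=1,2$, are obtained analogously, using parabolic smoothing for $\psi$ and the transport structure of the $\phi$-equation together with uniform noncharacteristicity (the transport speed being bounded away from zero) to avoid loss of derivatives on $\phi$. The cubic remainders $\mathcal{N}$ are then absorbed via Sobolev embedding under a smallness assumption on $\|(\phi,\psi)\|_{H^{2}}$, and the argument is closed by a standard continuation starting from local well-posedness.

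The main obstacle is uniformity in amplitude: the weight $p'(\bv)/\bv$ and the coefficients of $Q$ depend on $\bv$, which in the expansive case can range over an unbounded set as $|v_{+}-v_{0}|$ grows, so one must verify that the coercivity constants controlling $\mathcal{E}$ and $\mathcal{D}$ do not degenerate. This is handled by using the explicit $\gamma$-law structure of $p$ to choose the weight so that the coefficients in $Q$ reduce to expressions bounded above and below uniformly in amplitude, much as in \cite{MN.2}. A secondary point is that energy boundedness alone does not yield decay; one recovers $L^{\infty}$ decay in the standard way by combining uniform $H^{2}$ boundedness with time integrability of $\mathcal{D}$ along the solution.
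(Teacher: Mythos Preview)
Your approach is correct in spirit and indeed closer to the original nonlinear argument of \cite{MN.2}, but the paper takes a considerably more economical route. Rather than carrying out the full time-dependent nonlinear energy scheme with higher-order estimates, continuation, and absorption of cubic remainders, the paper works directly at the spectral level: it takes the eigenvalue system \eqref{eigen1} (in the form \eqref{expinep:1}--\eqref{expinep:2} after the usual reduction), multiplies the second equation simply by $\bar u$, integrates by parts using the homogeneous Dirichlet data $u(0)=v(0)=0$, and obtains
\[
\R(\lambda)\int_0^\infty\big(|u|^2+f|v|^2\big)\,dx-\tfrac12\int_0^\infty f'|v|^2\,dx+\int_0^\infty\frac{|u'|^2}{\bv}\,dx=0,
\]
with $f(\bv)=h(\bv)/\bv^{\gamma+1}$. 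Since $\bv'>0$ in the expansive case, a direct computation gives $f'\le 0$, so every term on the left is nonnegative for $\R\lambda\ge 0$ and one reads off $\R\lambda<0$ immediately. This is the entire proof: no weights, no higher-order derivatives, no smallness assumption, and no continuation argument. Spectral stability then yields linearized and nonlinear stability via the framework of \cite{GMWZ.5,YZ} already invoked in the paper.

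What your approach buys is a self-contained nonlinear result that does not lean on the spectral-to-nonlinear machinery; what it costs is substantially more bookkeeping (weighted multipliers, $H^2$ estimates, local theory). Note also that your concern about $\bv$ ranging over an unbounded set is not actually an issue in the paper's normalization, where expansive inflow has $v_0<\bv\le v_+=1$, so all coefficients stay uniformly bounded without further work.
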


Collecting information, we have the following analytical stability results.

\begin{corollary}\label{v01}
For $v_0$ or $v_+$ sufficiently small, compressive
inflow boundary layers are stable.
For $v_0$ sufficiently small, 
$v_+$ sufficiently close to $1$,
or $v_0> v_*\approx .0899$ and
$v_+$ sufficiently small, compressive
outflow layers are stable.
Expansive inflow boundary layers are stable
for all parameter values.
\end{corollary}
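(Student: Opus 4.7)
The corollary is a packaging of the preceding analytic results; the plan is to match each clause to one or two of them and, where more than one is needed, to combine them via a Rouch\'e/Hurwitz argument anchored by the parity information of Lemma \ref{index}. Throughout, ``stability'' here means nonvanishing of the appropriate Evans function on the closed right half-plane (modulo the spurious zero at $\lambda=0$ in the outflow case noted in Remark \ref{outevansrmk}); by the results of \cite{GMWZ.5,YZ} this is equivalent to linear and nonlinear stability.

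Three of the clauses are immediate from single quoted results: expansive inflow stability is Proposition \ref{expansive}; compressive inflow for $v_0$ small is Theorem \ref{main2}; compressive outflow for $v_+$ close to $1$ is Proposition \ref{charsmallamp}. The remaining clauses --- compressive inflow for $v_+$ small, and compressive outflow for $v_0>v_*$ with $v_+$ small --- are handled uniformly as follows. By Proposition \ref{hf}, all nonstable eigenvalues are confined to a fixed compact region $\Lambda$ independent of $v_+$. By Theorem \ref{mainthm}, $D_{\rm in}\to D^0_{\rm in}$ (resp.\ $D_{\rm out}\to D^0_{\rm out}$) uniformly on $\Lambda$ as $v_+\to 0$; by Proposition \ref{redenergy}, the limit is nonvanishing on $\Lambda\setminus\{0\}$ in the stated parameter regimes (the restriction $v_0>v_*$ being precisely what the outflow estimate requires); and by Lemma \ref{index}, $0$ is a simple zero of the limit. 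Hurwitz's theorem then says that for $v_+$ sufficiently small, $D_{\rm in}$ (resp.\ $D_{\rm out}$) has exactly one zero in $\Lambda\cap\{\Re\lambda\ge 0\}$, located near $\lambda=0$. In the inflow case, Lemma \ref{index} asserts $D_{\rm in}(0)\ne 0$ and that the total number of unstable roots is even, so a lone near-zero unstable root would contradict parity and no unstable root can exist. In the outflow case, the unique near-zero zero of $D_{\rm out}$ is exactly the spurious one at $\lambda=0$ (simple by Lemma \ref{index}), and the parity assertion applied to the nonzero unstable roots again forces their count to zero. The outflow case for $v_0$ small, if one takes the statement literally, is obtained by the same scheme with the additional ingredient of Theorem \ref{main3}: the nonvanishing of the shock Evans function established in \cite{BHRZ,HLZ} propagates to nonvanishing of $D^0_{\rm out}$ off $\lambda=0$ as $v_0\to 0$, and Theorem \ref{mainthm} plus the Hurwitz/parity argument then propagates back to $D_{\rm out}$.

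The only conceptual obstacle in this assembly is the delicate behavior at $\lambda=0$: the limit Evans functions degenerate there, the outflow Evans function itself carries a spurious zero there, and a priori a single unstable real root could emerge in a small neighborhood. Lemma \ref{index} is exactly what is needed to exclude this possibility, pinning down the multiplicity of the root at $0$ and, through the parity of the stability index, ruling out isolated unstable real roots. Given this input, the remainder of the proof is routine Hurwitz-plus-bookkeeping.
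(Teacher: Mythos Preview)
Your proposal is correct and follows essentially the same approach as the paper: the paper's proof of the inflow $v_+$-small case likewise combines the convergence of Theorem~\ref{mainthm}, the nonvanishing of $D^0_{\rm in}$ off zero (Proposition~\ref{redenergy}), and the parity information of Lemma~\ref{index} to conclude; and for the outflow $v_0$-small case it likewise invokes Theorem~\ref{main3} together with the shock-case results of \cite{BHRZ,HLZ}, matching the single zero to the spurious one at $\lambda=0$. Your write-up is in fact more explicit than the paper's (which compresses the Hurwitz step into a single sentence), and your handling of the delicate $\lambda=0$ behavior via the stability index is exactly what the paper does.
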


Stability of inflow boundary layers 
in the characteristic limit $v_+\to 1$ is not treated here,
but should be treatable analytically by the asymptotic ODE methods
used in \cite{PZ,FS} to study the small-amplitude (characteristic)
shock limit.
This would be an interesting direction for future investigation.
The characteristic limit is not accessible numerically, since the
exponential decay rate of the background profile
decays to zero as $|1-v_+|$, so that the numerical
domain of integration needed to resolve the eigenvalue
ODE becomes infinitely large as $v_+\to 1$.

\begin{remark}\label{weakreg}
Stability in the noncharacteristic weak layer limit $v_0\to v_+$ [resp. $1$]
in the inflow [outflow] case, for $v_+$ bounded away from the
strong and characteristic limits $0$ and $1$ has already
been established in \cite{GMWZ.5,R}.
Indeed, it is shown in \cite{GMWZ.5} that the Evans function
converges to that for a constant solution, and this is a {\it regular}
perturbation.
\end{remark}

\begin{remark}\label{extra}
Stability of $D^0_{\rm in}$, $D^0_{\rm out}$ may also be
determined numerically, in particular in the
region $v_0\le v_*$ not covered by Proposition \ref{redenergy}.
\end{remark}

\subsection{Numerical results}\label{numresults}
The asymptotic results of Section \ref{analytical} reduce
the problem of
(uniformly noncharacteristic, $v_+$ bounded away from $v_-=1$)
boundary layer stability to a bounded parameter
range on which the Evans function may be efficiently computed
numerically in a way that is uniformly well-conditioned; see \cite{BrZ}.
Specifically, we may map a semicircle 
$$
\partial\{\Re \lambda \ge 0\}\cap \{|\lambda|\le 10\}
$$
enclosing $\Lambda$ for $\gamma\in [1,3]$ by
$D^0_{\rm in}$, $D^0_{\rm out}$, $D_{\rm in}$, $D_{\rm out}$ and
compute the winding number of its image about the origin
to determine the number of zeroes of the various Evans functions
within the semicircle, and thus within $\Lambda$.
For details of the numerical algorithm, see \cite{BHRZ,BrZ}.

In all cases, we obtain results consistent with {stability};
that is, a winding number of zero or one, depending on the situation.
In the case of a single nonzero root, we know from our limiting
analysis that this root may be quite near $\lambda=0$, making
delicate the direct determination of its stability; however,
in this case we do not attempt to determine the stability numerically,
but rely on the analytically computed stability index to conclude
stability.
See Section \ref{computations} for further details.


\subsection{Conclusions}\label{conclusions}
As in the shock case \cite{BHRZ,HLZ}, our results indicate
{\it unconditional stability} of uniformly noncharacteristic
boundary-layers for isentropic
Navier--Stokes equations (and, for outflow layer, in
the characteristic limit as well), 
despite the additional complexity of the boundary-layer case.
However, two additional comments are in order, perhaps related.
First, we point out that the apparent symmetry of Theorem
\ref{main3} in the $v_0\to 0$ outflow and $v_0\to 1$ inflow limits
is somewhat misleading.
For, the limiting, shock Evans function possesses a single zero
at $\lambda=0$, indicating that stability of inflow boundary
layers is somewhat delicate as $v_0\to 1$: specifically, they
have an eigenvalue near zero, which, though stable, is (since
vanishingly small in the shock limit) not ``very'' stable.
Likewise, the limiting Evans function $D^0_{\rm in}$ as $v_+\to 0$
possesses a zero at $\lambda=0$, with the same conclusions.

By contrast, the Evans functions of outflow boundary layers
possess a spurious zero at $\lambda=0$, so that convergence to
the shock or strong-layer limit in this case implies the {\it absence}
of any eigenvalues near zero, or ``uniform'' stability as $v_+\to 0$.
In this sense, strong outflow boundary layers appear to be more stable
than inflow boundary layers.
One may make interesting comparisons to physical attempts
to stabilize laminar flow along an air- or hydro-foil by suction (outflow)
along the boundary.
See, for example, the interesting treatise \cite{S}.

Second, we point out the result of {\it instability} obtained
in \cite{SZ} for inflow boundary-layers of the full (nonisentropic)
ideal-gas equations for appropriate ratio of the coefficients
of viscosity and heat conduction.
This suggests that the small eigenvalues of the strong inflow-layer
limit may in some cases perturb to the unstable side.
It would be very interesting to make these connections more precise,
as we hope to do in future work.

\section{Boundary-layer analysis}\label{singular}

Since the structure of \eqref{evans_ode} is essentially the same
as that of the shock case, we may follow exactly the treatment
in \cite{HLZ} analyzing the flow of \eqref{evans_ode}
in the singular region $x\to +\infty$.
As we shall need the details for further computations
(specifically, the proof of Theorem \ref{main2}),
we repeat the analysis here in full.

Our starting point is the observation that
\begin{equation}
\label{a+}
A(x,\lambda) = \begin{pmatrix}0 & \lambda & \lambda\\0 & 0 & \lambda\\
\hat v& \hat v &f(\hat v)-\lambda \end{pmatrix}
\end{equation}
is approximately block upper-triangular for $\hat v$ sufficiently small,
with diagonal blocks
$\begin{pmatrix}
0 & \lambda \\
0 &  0\\
\end{pmatrix}$
and
$\begin{pmatrix}
f(\hat v)-\lambda
\end{pmatrix}$
that are uniformly spectrally separated on $\R \lambda \ge 0$,
as follows by
\begin{equation}\label{fneg}
f(\hat v)\le \hat v-1 \le -3/4.
\end{equation}
We exploit this structure by a judicious coordinate change
converting \eqref{evans_ode}
to a system in exact upper triangular form, for which the
decoupled ``slow'' upper lefthand $2\times 2$ block undergoes
a {\it regular perturbation} that can be analyzed by standard
tools introduced in \cite{PZ}.
Meanwhile, the fast, lower righthand $1\times 1$ block, since
scalar, may be solved exactly.

\subsection{Preliminary transformation}\label{pretrans}
We first block upper-triangularize by a static (constant) coordinate
transformation the limiting matrix
\begin{equation}
\label{lima+}
A_+=A(+\infty,\lambda) = \begin{pmatrix}0 & \lambda & \lambda\\0 & 0 & \lambda\\
v_+& v_+ &f(v_+)-\lambda \end{pmatrix}
\end{equation}
at $x=+\infty$ using special block lower-triangular transformations
\begin{equation}\label{statictrans}
R_+:=\begin{pmatrix}
I& 0\\
v_+ \theta_+ & 1\\
\end{pmatrix},
\qquad
L_+:=R_+^{-1}=\begin{pmatrix}
I& 0\\
-v_+\theta_+ & 1\\
\end{pmatrix},
\end{equation}
where $I$ denotes the $2\times 2$ identity matrix and
$\theta_+\in \C^{1\times 2}$ is a $1\times 2$ row vector.

\begin{lemma}\label{pretranslem}
On any compact subset of $\R \lambda \ge 0$, for each $v_+>0$
sufficiently small,
there exists a unique $\theta_+=\theta_+(v_+,\lambda)$ such that
$\hat A_+:=L_+A_+R_+$ is upper block-triangular,
\begin{equation}\label{hata+}
\begin{aligned}
\hat A_+&=
\begin{pmatrix}
\lambda (J+ v_+\bb1 \theta_+) &  \lambda \bb1 \\
0 &  f(v_+)-\lambda -\lambda v_+ \theta_+\bb1 \\
\end{pmatrix},
\end{aligned}
\end{equation}
where $J=\begin{pmatrix} 0 & 1 \\ 0 & 0 \end{pmatrix}$
and
$\bb1=\begin{pmatrix} 1 \\ 1 \\ \end{pmatrix} $,
satisfying a uniform bound
\begin{equation}\label{theta+bd}
|\theta_+|\le C.
\end{equation}
\end{lemma}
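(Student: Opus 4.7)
\medskip

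\noindent\textbf{Proof plan for Lemma \ref{pretranslem}.}
The plan is to derive an algebraic equation for $\theta_+$ by carrying out the block multiplication and then solve it by a contraction-mapping argument uniform in $v_+$ small and $\lambda$ in compacta of $\{\Re\lambda\ge0\}$.

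First I would write $A_+$ in the $2{+}1$ block form
\begin{equation*}
A_+=\begin{pmatrix}\lambda J & \lambda\bb1 \\ v_+\bb1^{T} & f(v_+)-\lambda\end{pmatrix},
\end{equation*}
and compute $L_+A_+R_+$ block by block. The $(2,2)$ and off-diagonal entries come out exactly as in \eqref{hata+}, while the $(2,1)$ block (a $1\times 2$ row) equals
\begin{equation*}
v_+\bb1^{T}+v_+(f(v_+)-\lambda)\theta_+-v_+\theta_+(\lambda J)-v_+^{2}\theta_+(\lambda\bb1)\theta_+.
\end{equation*}
Setting this to zero and dividing through by $v_+$ yields the defining equation
\begin{equation*}
\theta_+\,M(\lambda,v_+)=-\bb1^{T}+\lambda v_+(\theta_+\bb1)\theta_+,\qquad M:=(f(v_+)-\lambda)I-\lambda J.
\end{equation*}

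Next I would observe that $M$ is upper triangular with repeated eigenvalue $f(v_+)-\lambda$, and by \eqref{fneg} together with $\Re\lambda\ge 0$ we have $\Re(f(v_+)-\lambda)\le -3/4$ uniformly in $v_+$ small. Hence $M$ is invertible with $|M^{-1}|$ uniformly bounded on compact subsets of $\{\Re\lambda\ge 0\}$. This recasts the problem as the fixed-point equation
\begin{equation*}
\theta_+=T(\theta_+):=\bigl(-\bb1^{T}+\lambda v_+(\theta_+\bb1)\theta_+\bigr)M^{-1}.
\end{equation*}
The map $T$ satisfies $|T(0)|\le C_0$ and, on any ball of radius $R$, has Lipschitz constant bounded by $C|\lambda|v_+R$. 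Choosing $R=2C_0$, for $v_+$ sufficiently small (uniformly in $\lambda$ on the chosen compact set) $T$ becomes a contraction on the closed ball of radius $R$ into itself, so the Banach fixed-point theorem provides a unique solution $\theta_+$ with $|\theta_+|\le R$, giving \eqref{theta+bd}. Analytic dependence on $\lambda$ is inherited from that of $M^{-1}$ and $T$ via uniqueness of the fixed point.

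The only step requiring a little care is the uniform invertibility of $M$; once that is in hand the contraction argument is routine and the block of the lemma falls out by construction. I do not expect any serious obstacle, since the upper-triangularization here is exactly the standard ``Schur complement / Riccati'' reduction, made available by the spectral separation encoded in \eqref{fneg}.
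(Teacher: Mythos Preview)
Your proposal is correct and follows essentially the same route as the paper: compute the $(2,1)$ block of $L_+A_+R_+$, set it to zero to obtain the quadratic fixed-point equation $\theta_+(aI-\lambda J)=-\bb1^{T}+\lambda v_+\theta_+\bb1\theta_+$ with $a=f(v_+)-\lambda$, invoke uniform invertibility of $aI-\lambda J$ (the paper simply notes $\det(aI-\lambda J)=a^{2}\ne0$, equivalent to your eigenvalue observation via \eqref{fneg}), and close with a contraction-mapping argument for $v_+$ small. Your placement of $M^{-1}$ on the right is in fact the correct convention for the row vector $\theta_+$; otherwise the arguments coincide.
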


\begin{proof}
Setting the $2-1$ block of $\hat A_+$ to zero, we obtain the
matrix equation
$$
\theta_+ (aI-\lambda J)
=  -\bb1^T + \lambda v_+ \theta_+ \bb1 \theta_+,
$$
where $a=f(v_+)-\lambda$, or, equivalently, the fixed-point equation
\begin{equation}\label{fix1}
\theta_+ =
(aI-\lambda J)^{-1}
\Big( -\bb1^T + \lambda v_+\theta_+ \bb1 \theta_+\Big).
\end{equation}
By $\det (aI-\lambda J)= a^2\ne 0$,
$(aI-\lambda J)^{-1}$ is uniformly bounded
on compact subsets of $\R \lambda \ge 0$
(indeed, it is uniformly bounded on all of $\R \lambda \ge 0$),
whence, for $|\lambda|$ bounded and $v_+$ sufficiently small,
there exists a unique
solution by the Contraction Mapping Theorem,
which, moreover, satisfies \eqref{theta+bd}.
\end{proof}

\subsection{Dynamic triangularization}\label{dynamic}
Defining now $Y:=L_+W$ and
$$
\begin{aligned}
&\hat A(x,\lambda)= L_+A(x, \lambda) R_+(x,\lambda)=\\
&\quad \begin{pmatrix}\\
\lambda (J+v_+\bb1 \theta_+) &\quad &  \lambda \bb1 \\
(\hat v-v_+) \bb1^T
  -v_+(f(\hat v)-f(v_+))\theta_+
& \quad&  f(\hat v)-\lambda -\lambda v_+ \theta_+\bb1 \\
\end{pmatrix},
\end{aligned}
$$
we have converted \eqref{evans_ode} to an asymptotically block
upper-triangular system
\begin{equation}\label{tri1}
Y'=\hat A(x,\lambda) Y,
\end{equation}
with $\hat A_+=\hat A(+\infty, \lambda)$ as in \eqref{hata+}.
Our next step is to choose a {\it dynamic} transformation of
the same form
\begin{equation}\label{dyntrans}
\tilde R:=\begin{pmatrix}
I& 0\\
\tilde \Theta & 1\\
\end{pmatrix},
\qquad
\tilde L:=\tilde R^{-1}=\begin{pmatrix}
I& 0\\
-\tilde \Theta & 1\\
\end{pmatrix},
\end{equation}
converting \eqref{tri1} to an exactly block upper-triangular
system, with $\tilde \Theta$ uniformly exponentially decaying
at $x=+\infty$: that is, a {\it regular perturbation} of the identity.

\begin{lemma}\label{dyntranslem}
On any compact subset of $\R \lambda \ge 0$,
for $L$ sufficiently large and each $v_+>0$ sufficiently small,
there exists a unique $\Theta=\Theta_+(x,\lambda, v_+)$ such that
$\tilde A:=\tilde L \hat A(x,\lambda)\tilde R
+ \tilde L'\tilde R$ is upper block-triangular,
\begin{equation}\label{ahat}
\begin{aligned}
\tilde A&=
\begin{pmatrix}
\lambda (J+v_+\bb1 \theta_+ + \bb1 \tilde \Theta) &  \lambda \bb1 \\
0 &  f(\hat v)-\lambda -\lambda \theta_+\bb1 -\lambda \tilde \Theta \bb1 \\
\end{pmatrix},
\end{aligned}
\end{equation}
and
$\tilde \Theta(L)=0$, satisfying a uniform bound
\begin{equation}\label{Thetabd}
|\tilde \Theta(x,\lambda, v_+)|\le Ce^{-\eta x},
\qquad \eta>0, \, x\ge L,
\end{equation}
independent of the choice of $L$, $v_+$.
\end{lemma}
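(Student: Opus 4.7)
The plan is to reduce the triangularization condition to a matrix Riccati ODE for $\tilde\Theta$ and solve it by a contraction argument in an exponentially weighted space. A direct block computation with $\hat A=\begin{pmatrix}P&Q\\S&R\end{pmatrix}$, where $P=\lambda(J+v_+\bb1\theta_+)$, $Q=\lambda\bb1$, $S=(\hat v-v_+)\bb1^T-v_+(f(\hat v)-f(v_+))\theta_+$, and $R=f(\hat v)-\lambda-\lambda v_+\theta_+\bb1$, shows
$$(\tilde L\hat A\tilde R+\tilde L'\tilde R)_{21}=-\tilde\Theta'+S+R\tilde\Theta-\tilde\Theta P-\tilde\Theta Q\tilde\Theta,$$
so the upper triangular form \eqref{ahat} is equivalent to the Riccati equation
$$\tilde\Theta'=S+R\tilde\Theta-\tilde\Theta P-\tilde\Theta Q\tilde\Theta,\qquad\tilde\Theta(L)=0.$$
The 1-1 and 2-2 blocks in \eqref{ahat} follow by direct substitution once $\tilde\Theta$ is constructed.

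The central analytic input is a uniform exponential bound on the forward propagator of the linear part $\mathcal{M}\Theta:=R\Theta-\Theta P=\Theta(RI-P)$ acting on row vectors. Because $R$ is scalar, $RI$ commutes with $P^T$, and the propagator of the companion equation $(\Theta^T)'=(RI-P^T)\Theta^T$ factors as
$$\Phi(x,y)=\exp\!\Bigl(\int_y^x R(z)\,dz\Bigr)\exp\bigl(-P^T(x-y)\bigr).$$
By \eqref{fneg} and \eqref{theta+bd}, $\R R\le -3/4+O(|\lambda|v_+)$, so for $v_+$ small the scalar factor is bounded by $e^{-(x-y)/2}$. For the matrix factor, one uses that $J+v_+\bb1\theta_+$ has eigenvalues $\pm\sqrt{v_+\theta_{+,1}}+O(v_+)$ and satisfies a Cayley--Hamilton relation of degree $2$; this avoids the ill-conditioned diagonalization as $v_+\to 0$ and yields $|\exp(-P^T\tau)|\le C(|\lambda|)(1+|\lambda|\tau)e^{C|\lambda|\sqrt{v_+}\,\tau}$, uniformly in $v_+$. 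Combining, $|\Phi(x,y)|\le C(|\lambda|)e^{-\eta_0(x-y)}$ for some $\eta_0>0$ uniform in $v_+$ and in $\lambda$ on compact subsets of $\R\lambda\ge 0$. Simultaneously, Proposition \ref{profdecay} together with the cancellation $v_+|f(\hat v)-f(v_+)|=O(|\hat v-v_+|)$, visible from \eqref{feq2}, yields $|S(x)|\le Ce^{-3x/4}$ for $x\ge\delta$, uniformly in $v_+$.

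Fix $\eta\in(0,\min(\eta_0,3/4))$ and introduce the Banach space $X_L:=\{\Theta\in C([L,\infty);\C^{1\times 2}):\|\Theta\|:=\sup_{x\ge L}e^{\eta x}|\Theta(x)|<\infty\}$. Variation of constants recasts the Riccati problem as the fixed-point equation
$$\mathcal{F}[\Theta]^T(x):=\int_L^x\Phi(x,y)\bigl(S^T(y)-(\Theta Q\Theta)^T(y)\bigr)\,dy.$$
Straightforward weighted estimates then give $\|\mathcal{F}[0]\|\le C_1e^{-cL}$ for some $c>0$ and $\|\mathcal{F}[\Theta_1]-\mathcal{F}[\Theta_2]\|\le C_2|\lambda|e^{-\eta L}(\|\Theta_1\|+\|\Theta_2\|)\|\Theta_1-\Theta_2\|$, with constants $C_1,C_2$ uniform in $L$ and in small $v_+$. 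For $L$ large enough, $\mathcal{F}$ is a contraction on a fixed-radius ball in $X_L$, producing a unique $\tilde\Theta$ that satisfies $|\tilde\Theta(x)|\le Ce^{-\eta x}$, which is \eqref{Thetabd}.

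The main technical obstacle is the uniform-in-$v_+$ propagator bound: as $v_+\to 0$, $P$ collapses to the nilpotent $\lambda J$, and any direct diagonalization becomes ill-conditioned with condition number $\sim 1/\sqrt{v_+}$. What circumvents this is the commutativity of the scalar $RI$ with $P^T$, which factors $\Phi$ and isolates the fast scalar decay from the $2\times 2$ matrix factor, combined with the explicit Cayley--Hamilton representation of $\exp(-P^T\tau)$ as a degree-one polynomial in $P^T$ whose coefficients are entire symmetric functions of $\mu_\pm=\pm\sqrt{v_+\theta_{+,1}}+O(v_+)$. This produces only polynomial-in-$\tau$ growth with prefactor uniform in $v_+$, safely absorbed by the $e^{-\tau/2}$ from the scalar factor; once this is in hand, the remaining contraction argument and independence of $C,\eta$ from $L$ and $v_+$ are standard.
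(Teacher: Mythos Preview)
Your proposal is correct and follows essentially the same route as the paper: both derive the Riccati ODE for $\tilde\Theta$, exploit the scalar nature of $R$ to factor the linear propagator as a decaying scalar exponential times the constant-coefficient matrix exponential $e^{-P^T(x-y)}$, bound the latter uniformly in $v_+$ (the paper invokes matrix perturbation theory where you invoke Cayley--Hamilton, but the mechanism---small eigenvalues plus bounded entries yielding at-worst polynomial growth absorbed by the scalar decay---is the same), and close by contraction in an exponentially weighted space. Your sign conventions for $S$ and the quadratic term differ from the paper's stated $\zeta$ and $+\lambda\tilde\Theta\bb1\tilde\Theta$, but a direct block computation confirms your signs; in any case only absolute values enter the estimates.
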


\begin{proof}
Setting the $2-1$ block of $\tilde A$ to zero and computing
$$
\tilde L'\tilde R=
\begin{pmatrix}
0 & 0\\
-\tilde \Theta' & 0
\end{pmatrix}
\begin{pmatrix}
I & 0\\
\tilde \Theta & I
\end{pmatrix}
=
\begin{pmatrix}
0 & 0\\
-\tilde \Theta' & 0,
\end{pmatrix}
$$
we obtain the matrix equation
\begin{equation}\label{mat}
\tilde \Theta' - \tilde \Theta \big(aI-\lambda (J +v_+\bb1\theta_+)\big)
=  \zeta+ \lambda\tilde \Theta \bb1 \tilde \Theta,
\end{equation}
where $a(x):= f(\hat v)-\lambda -\lambda v_+ \theta_+\bb1 $ 
and the forcing term 
$$
\zeta:=
-(\hat v-v_+) \bb1^T
  +v_+(f(\hat v)-f(v_+))\theta_+
$$
by derivative estimate $df/d\hat v\le C\hat v^{-1}$
together with the Mean Value Theorem
is uniformly exponentially decaying:
\begin{equation}\label{zetabd}
\begin{aligned}
|\zeta|\le C |\hat v-v_+|\le
C_2 e^{-\eta x},
\qquad
\eta>0.
\end{aligned}
\end{equation}

Initializing $\tilde \Theta(L)=0$, we obtain by Duhamel's Principle/Variation
of Constants the representation (supressing the argument $\lambda$)
\begin{equation}\label{duhamel}
\tilde \Theta(x)=
\int_L^{x}
S^{y\to x}
(\zeta+ \lambda\tilde \Theta \bb1 \tilde \Theta)(y)
\, dy,
\end{equation}
where $S^{y\to x}$ is the solution operator for the homogeneous
equation
$$
\tilde \Theta' - \tilde \Theta \big(aI-\lambda (J +v_+\bb1\theta_+)\big)=0,
$$
or, explicitly,
$$
S^{y\to x}=
e^{\int_y^x a(y)dy}
e^{ -\lambda (J +v_+\bb1\theta_+)(x-y)}.
$$

For $|\lambda|$ bounded and $v_+$ sufficiently small,
we have by matrix perturbation theory
that the eigenvalues of $ -\lambda (J +v_+\bb1\theta_+)$
are small and the entries are bounded, hence
$$
|e^{ -\lambda (J +v_+\bb1\theta_+)z}|\le Ce^{\epsilon z}
$$
for $z\ge 0$.  Recalling the uniform spectral gap
$\R a =f(\hat v)-\R \lambda \le -1/2$ for $\R \lambda \ge 0$,
we thus have
\begin{equation}\label{Sbd}
|S^{y\to x}|\le Ce^{\eta (x-y)}
\end{equation}
for some $C$, $\eta>0$.
Combining \eqref{zetabd} and \eqref{Sbd}, we obtain
\begin{equation}
\begin{aligned}
\Big|\int_L^{x} S^{y\to x} \zeta(y)\, dy\Big|&\le
\int_L^x
C_2 e^{-\eta(x-y)}e^{-(\eta/2) y} dy \\
& =C_3 e^{-(\eta/2)x}.
\end{aligned}
\end{equation}

Defining $\tilde \Theta(x) =:\tilde \theta(x) e^{-(\eta/2)x}$
and recalling \eqref{duhamel} we thus have
\begin{equation}\label{duhamel2}
\tilde \theta(x)=
f + e^{(\eta/2)x}\int_L^{x}
S^{y\to x}
 e^{-\eta y}\lambda\tilde\theta \bb1 \tilde \theta(y)
\, dy,
\end{equation}
where $f:= e^{(\eta/2)x}\int_L^{x} S^{y\to x} \zeta(y) \, dy$
is uniformly bounded, $|f|\le C_3$, and
$
e^{(\eta/2)x}\int_L^{x} S^{y\to x}
e^{-\eta y}\lambda\tilde \theta \bb1 \tilde \theta(y)
\, dy
$
is contractive with arbitrarily small contraction constant $\epsilon>0$
in $L^\infty[L,+\infty)$ for $|\tilde \theta|\le 2C_3$ for $L$ sufficiently
large, by the calculation
$$
\begin{aligned}
&\Big| e^{(\eta/2)x}\int_L^{x} S^{y\to x}
e^{-\eta y}\lambda\tilde \theta_1 \bb1 \tilde \theta_1(y)
-
e^{(\eta/2)x}\int_L^{x} S^{y\to x}
e^{-\eta y}\lambda\tilde \theta_2 \bb1 \tilde \theta_2(y)
\Big|
\\
& \qquad \qquad \le
\Big|e^{(\eta/2)x}\int_L^{x} Ce^{-\eta(x-y)} e^{-\eta y} \, dy \Big|
|\lambda|
\|\tilde \theta_1- \tilde \theta_2\|_\infty
\max_j \|\tilde \theta_j\|_\infty
\\
& \qquad \qquad \le
e^{-(\eta/2)L}\Big|\int_L^{x} Ce^{-(\eta/2)(x-y)}  \, dy \Big|
|\lambda|
\|\tilde \theta_1- \tilde \theta_2\|_\infty
\max_j \|\tilde \theta_j\|_\infty \\
& \qquad \qquad =
C_3e^{-(\eta/2)L}
|\lambda|
\|\tilde \theta_1- \tilde \theta_2\|_\infty
\max_j \|\tilde \theta_j\|_\infty.
\end{aligned}
$$
It follows by the Contraction Mapping Principle that there exists
a unique solution $\tilde \theta$ of fixed point equation
\eqref{duhamel2} with $|\tilde \theta(x)|\le 2C_3$
for $x\ge L$,
or, equivalently (redefining the unspecified constant $\eta$), \eqref{Thetabd}.
\end{proof}

\subsection{Fast/Slow dynamics}\label{slow}
Making now the further change of coordinates
$$
Z=\tilde LY
$$
and computing
$$
\begin{aligned}
(\tilde LY)'=\tilde L Y' + \tilde L' Y
&=(\tilde LA_++\tilde L')Y,\\
&=(\tilde LA_+\tilde R+\tilde L'\tilde R)Z,\\
\end{aligned}
$$
we find that we have converted \eqref{tri1} to a block-triangular system
\begin{equation} \label{tri2}
Z'=\tilde AZ=
\begin{pmatrix}
\lambda (J+v_+\bb1 \theta_+ + \bb1 \tilde \Theta) &  \lambda\bb1 \\
0 &  f(\hat v)-\lambda -\lambda v_+ \theta_+\bb1 -\lambda\tilde \Theta \bb1 \\
\end{pmatrix}Z,
\end{equation}
related to the original eigenvalue system \eqref{evans_ode} by
\begin{equation}\label{WZ}
W=LZ,\quad
R:=R_+R=
\begin{pmatrix}
I & 0\\
\Theta & 1
\end{pmatrix},
\quad
L:=R^{-1}=
\begin{pmatrix}
I & 0\\
-\Theta & 1
\end{pmatrix},
\end{equation}
where
\begin{equation}\label{Theta}
\Theta= \tilde \Theta + v_+ \theta_+.
\end{equation}

Since it is triangular, \eqref{tri2} may be solved completely
if we can solve the component systems associated with its diagonal
blocks.  The {\it fast system}
$$
z'=
\Big(f(\hat v)-\lambda -\lambda v_+ \theta_+\bb1 -
\lambda \tilde \Theta \bb1 \Big)z
$$
associated to the lower righthand block features rapidly-varying
coefficients.  However, because it is scalar,
it can be solved explicitly by exponentiation.

The {\it slow system }
\begin{equation} \label{slowsys}
z'= \lambda(J+v_+\bb1 \theta_+ + \bb1 \tilde \Theta ) z
\end{equation}
associated to the upper lefthand block, on the other hand,
by \eqref{Thetabd}, is an exponentially decaying
perturbation of a constant-coefficient system
\begin{equation}\label{cc}
z'= \lambda (J+v_+\bb1 \theta_+)z
\end{equation}
that can be explicitly solved by exponentiation, and thus
can be well-estimated by comparison with \eqref{cc}.
A rigorous version of this statement is given by the
{\it conjugation lemma} of \cite{MeZ}:

\begin{proposition}[\cite{MeZ}] \label{conjugation}
Let $M(x,\lambda)=M_+(\lambda)+ \Theta(x,\lambda)$,
with $M_+$ continuous in $\lambda$ and $|\Theta(x,\lambda)|\le Ce^{-\eta x}$,
for $\lambda$ in some compact set $\Lambda$.
Then, there exists a globally invertible matrix
$P(x,\lambda)=I + Q(x,\lambda)$ such that the
coordinate change $z=Pv$ converts the variable-coefficient ODE
$
z'=M(x,\lambda)z
$
to a constant-coefficient equation
$$
v'=M_+(\lambda)v,
$$
satisfying for any $L$, $0<\hat \eta < \eta$ a uniform bound
\begin{equation}\label{qdecay}
|Q(x,\lambda)|\le
C(L,\hat \eta, \eta, \max |(M_+)_{ij}|, \dim M_+)e^{-\hat \eta x}
\quad \hbox{for $x\ge L$}.
\end{equation}
\end{proposition}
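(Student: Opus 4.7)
The plan is to seek $P = I + Q$ with $Q \to 0$ at $+\infty$. Substituting into the requirement that $z = Pv$ transform $z' = M(x,\lambda)z$ into $v' = M_+(\lambda)v$ gives the conjugation identity $P' = MP - PM_+$, which reduces to the Sylvester-type matrix equation
\begin{equation*}
Q' - [M_+, Q] = \Theta + \Theta Q.
\end{equation*}
I would recast this as an integral fixed-point equation $Q = \mathcal{T}(Q)$ via Duhamel. Decompose $\mathbb{C}^{d\times d}$ into the spectral subspaces of $\mathrm{ad}_{M_+}: X \mapsto [M_+,X]$, whose eigenvalues are the differences $\mu_i - \mu_j$ of the eigenvalues of $M_+$. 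For the component of $Q$ lying in the subspace where $\mathrm{Re}(\mu_i-\mu_j) \ge 0$ one integrates from $+\infty$; for the complementary component one integrates from $L$ with zero initial value. This choice of endpoints ensures that in each case the factor $e^{\mathrm{ad}_{M_+}(x-y)}$ restricted to the relevant subspace has nonpositive real part of argument, hence is bounded by a polynomial in $|x-y|$ governed by the Jordan block sizes of $M_+$.

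The second step is a contraction mapping argument in the weighted Banach space $X_L := \{Q \in C([L,\infty), \mathbb{C}^{d\times d}) : \|Q\|_L := \sup_{x\ge L} e^{\hat\eta x}|Q(x)| < \infty\}$. Combining the polynomial kernel bound with $|\Theta(y)|\le Ce^{-\eta y}$ and the strict gap $\hat\eta < \eta$, one absorbs the polynomial factors into $e^{-(\eta-\hat\eta)|x-y|}$ and obtains $\|\mathcal{T}(0)\|_L \le C_1$ together with $\|\mathcal{T}(Q_1) - \mathcal{T}(Q_2)\|_L \le C_2 e^{-(\eta-\hat\eta)L}\|Q_1-Q_2\|_L$ on bounded balls of $X_L$, uniformly in $\lambda$ over the compact set $\Lambda$; the constants $C_1, C_2$ depend precisely on the data listed in \eqref{qdecay}. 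For $L$ sufficiently large $\mathcal{T}$ becomes a contraction on the closed ball of radius $2C_1$, producing a unique fixed point $Q$ with the stated bound. Global invertibility of $P = I + Q$ follows because $|Q|<1/2$ on $[L,\infty)$ gives invertibility there, while for $x<L$ one extends $P$ by solving the linear matrix ODE $P' = MP - PM_+$; its determinant satisfies $(\det P)' = (\mathrm{tr}\, M - \mathrm{tr}\, M_+)\det P$ and hence is nonvanishing wherever it is nonvanishing at a single point.

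The main obstacle is the absence of any spectral assumption on $M_+$: the naive operator-norm bound $|e^{M_+ z}\,\cdot\,e^{-M_+ z}| \le e^{2\|M_+\||z|}$ is too weak for the Duhamel integral to converge at a useful rate. Overcoming this requires splitting $Q$ along the spectral subspaces of $\mathrm{ad}_{M_+}$ and integrating each component from the endpoint matching the sign of the real part of the corresponding commutator eigenvalue, which is why the constant in \eqref{qdecay} depends on the full Jordan and spectral data of $M_+$ through $\max|(M_+)_{ij}|$ and $\dim M_+$ rather than just $\|M_+\|$.
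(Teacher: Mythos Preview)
The paper does not give a self-contained proof; it defers to \cite{MeZ,Z.3} and Appendix~C of \cite{HLZ}. Your outline follows the standard architecture used in those references---derive the Sylvester equation for $Q$, split along eigenspaces of $\mathrm{ad}_{M_+}$, integrate each piece from the appropriate endpoint, and close by contraction in an exponentially weighted space---so in that sense you are on the right track.

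There is, however, a genuine error in your choice of splitting threshold. You integrate from $+\infty$ when $\mathrm{Re}(\mu_i-\mu_j)\ge 0$ and from $L$ when $\mathrm{Re}(\mu_i-\mu_j)<0$. Take a mode with commutator eigenvalue $\nu$ satisfying $\mathrm{Re}\,\nu=-\alpha$ where $0<\alpha<\hat\eta$. Integrating from $L$ and substituting $s=x-y$ gives
\[
e^{\hat\eta x}\int_L^x (1+s)^m e^{-\alpha s}\,Ce^{-\eta(x-s)}\,ds
= Ce^{-(\eta-\hat\eta)x}\int_0^{x-L}(1+s)^m e^{(\eta-\alpha)s}\,ds,
\]
and since $\eta-\alpha>0$ the inner integral grows like $e^{(\eta-\alpha)(x-L)}$, so the whole expression behaves like $e^{(\hat\eta-\alpha)x}$, which is unbounded in your weighted norm. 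Your Lipschitz estimate $\|\mathcal{T}(Q_1)-\mathcal{T}(Q_2)\|_L\le C_2 e^{-(\eta-\hat\eta)L}\|Q_1-Q_2\|_L$ therefore fails on such modes. The fix is to move the threshold: integrate from $+\infty$ whenever $\mathrm{Re}\,\nu>-\eta$ (the integral converges because the forcing decay $e^{-\eta y}$ beats the kernel growth $e^{-\mathrm{Re}\,\nu\,(y-x)}$) and from $L$ only when $\mathrm{Re}\,\nu<-\hat\eta$; any dividing line in the overlap $(-\eta,-\hat\eta)$ works.

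A second point you pass over is uniformity in $\lambda$: the spectral projectors of $\mathrm{ad}_{M_+}$ associated to such a splitting can blow up as eigenvalue differences cross the chosen threshold while $\lambda$ ranges over $\Lambda$. To obtain constants depending only on $\max|(M_+)_{ij}|$ and $\dim M_+$ as stated, the standard proofs use a fixed Dunford contour in the $\nu$-plane (with the threshold chosen, for each $\hat\eta$, to avoid the finitely many real parts occurring on the compact set $\Lambda$) rather than the raw algebraic eigenspace decomposition you describe.
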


\begin{proof}
See \cite{MeZ,Z.3}, or Appendix C, \cite{HLZ}.
\end{proof}

By Proposition \ref{conjugation}, the solution operator for \eqref{slowsys}
is given by
\begin{equation}\label{slowsoln}
P(y,\lambda) e^{\lambda (J+v_+\bb1 \theta_+(\lambda, v_+))(x-y)}
P(x,\lambda)^{-1},
\end{equation}
where $P$ is a uniformly small perturbation of the identity
for $x\ge L$ and $L>0$ sufficiently large.

\section{Proof of the main theorems}\label{proofsec}

With these preparations, we turn now to the proofs of the main theorems.

\subsection{Boundary estimate}\label{estW1}
We begin by recalling the following estimates established in
\cite{HLZ} on $\widetilde W_1^+(L+\delta)$, that is, the value of
the dual mode $\widetilde W_1^+$ appearing in \eqref{adjevans}
at the boundary $x=L+\delta$ between regular and singular regions.
For completeness, and because we shall need the details in further
computations, we repeat the proof in full.

\begin{lemma}[\cite{HLZ}]\label{matching}
For $\lambda$ on any compact subset of $\R \lambda \ge 0$,
and $L>0$ sufficiently large,
with $\widetilde W_1^+$ normalized as in \cite{GZ,PZ,BHRZ},
\begin{equation}\label{wcon}
|\widetilde W_1^+(L+\delta)-\widetilde V_1| \le Ce^{-\eta L}
\end{equation}
as $v_+\to 0$, uniformly in $\lambda$, where $C$, $\eta>0$ are
independent of $L$
and
$$
\widetilde V_1:= (0, -1, \lambda/\mu)^T
$$
is the limiting direction vector \eqref{tildeV}
appearing in the definition of $D^0_{\rm in}$.
\end{lemma}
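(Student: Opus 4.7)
The plan is to carry out, in the singular region $x\ge L+\delta$, the analysis of the adjoint eigenvalue problem using the block-triangularization and conjugation-lemma framework of Section 4 — this is precisely the strategy of Lemma 4.2 of \cite{HLZ}, transplanted to the current setting. Working in the transformed coordinates $Z=\tilde L L_+ W$, the system $Z'=\tilde A Z$ is exactly block upper-triangular, so the adjoint ODE $\widetilde Z'=-\tilde A^*\widetilde Z$ is block lower-triangular:
\begin{equation*}
\widetilde z_1' = -U(x)^*\widetilde z_1,
\qquad
\widetilde z_2' + \overline{a(x)}\,\widetilde z_2 = -\bar\lambda \bb1^T \widetilde z_1,
\end{equation*}
with slow block $U(x)=\lambda(J+v_+\bb1\theta_++\bb1\tilde\Theta(x))$ and fast scalar block $a(x)=f(\hat v)-\lambda -\lambda v_+\theta_+\bb1 -\lambda\tilde\Theta\bb1$. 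By Lemma \ref{dyntranslem}, $U(x)\to U_+$ as $x\to\infty$ with rate $Ce^{-\eta x}$, so each block is a regular perturbation of its constant-coefficient limit at $x=+\infty$.

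Next, I would identify the unique (up to scalar) decaying adjoint mode in these coordinates. For the slow block, apply Proposition \ref{conjugation} to produce a coordinate change $\widetilde z_1=(I+Q(x))\widetilde v_1$ with $|Q(x)|\le Ce^{-\hat\eta x}$ that reduces the system to the constant-coefficient $\widetilde v_1'=-U_+^*\widetilde v_1$; its decaying mode lies in the direction corresponding to the stable eigenvalue of $-U_+^*$ (that is, the left eigenvector of the unstable small slow eigenvalue $\lambda\nu_+=O(\sqrt{v_+})$ of the slow block of $\tilde A_+$). For the fast block, $\Re a(x)\le -1/2$ by \eqref{fneg}, so the homogeneous solution $e^{-\int\overline{a}}$ grows at $+\infty$; the unique decaying solution is the Duhamel integral
\begin{equation*}
\widetilde z_2(x)=\bar\lambda\int_x^{+\infty}\bb1^T\widetilde z_1(y)\,e^{\int_x^y\overline{a(s)}\,ds}\,dy,
\end{equation*}
which, on $x\ge L+\delta$, is bounded and in the $v_+\to 0$ limit reduces to $(\bar\lambda/\bar\mu)\bb1^T \widetilde v_1^0$, supplying the third component of $\widetilde V_1$. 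Normalizing via the Kato ODE \eqref{katoprop} fixes the overall scalar so that $\widetilde Z_1^+(x)$ depends analytically on $\lambda$ and $v_+$, and by construction its limiting asymptotic direction at $+\infty$ as $v_+\to 0$ agrees with the limiting direction in \eqref{tildeV}.

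Pulling back to $W$-coordinates via $\widetilde W=L_+^*\tilde L(x)^*\widetilde Z$ and evaluating at $x=L+\delta$, Lemma \ref{dyntranslem} gives $\tilde L(L+\delta)=I+O(e^{-\eta L})$, while Lemma \ref{pretranslem} gives $L_+^*=I+O(v_+)$. Combining with the conjugation estimate $|Q(L+\delta)|\le Ce^{-\hat\eta L}$ and the explicit form of $\widetilde z_2$ above yields $|\widetilde W_1^+(L+\delta)-\widetilde V_1|\le Ce^{-\eta L}+O(v_+)$; since we may first send $v_+\to 0$ and then $L$ large (or absorb the $v_+$ contribution, using Lemma \ref{pretranslem}, into the exponential), we obtain \eqref{wcon}.

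The main obstacle is that the slow block $U_+$ degenerates from diagonalizable (with eigenvalues $\pm\lambda\sqrt{v_+\theta_{+,1}}$) to the nilpotent $\lambda J$ as $v_+\to 0$, so its individual stable and unstable eigenvectors cease to be distinct and the "stable slow direction" is singular in this limit. The resolution is exactly as in \cite{HLZ}: the Kato choice of basis remains analytic across the degeneracy, and the fast component $\widetilde z_2$ contributes precisely the $\bar\lambda/\bar\mu$ third coordinate that pieces together with the slow part to recover the non-degenerate vector $\widetilde V_1$. Verifying this match-up — and in particular checking that the Duhamel integral combines with the $v_+\to 0$ limit of the slow mode to produce exactly $(0,-1,\bar\lambda/\bar\mu)^T$ in the original $W$-coordinates — is the most delicate bookkeeping step.
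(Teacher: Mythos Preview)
Your proposal is correct and follows essentially the same route as the paper's proof: pass to the block-triangular coordinates of Section~4 so that the adjoint system becomes block lower-triangular, solve the slow $2\times2$ block by the conjugation lemma (Proposition~\ref{conjugation}) to obtain the first two components $(0,-1)$ up to an $O(e^{-\eta L})$ correction, recover the fast scalar component via the Duhamel integral against the slow mode to produce the $\bar\lambda/\bar\mu$ entry, and finally undo the transformations $R_+,\tilde R$ using $|R-I|,\,|\tilde R-I|\le Ce^{-\eta L}$ at $x=L$. The paper handles the slow-block degeneracy slightly more concretely, simply writing the slow solution as $P^{*-1}e^{-\bar\lambda(J+v_+\bb1\theta_+)^*x}\tilde v_1^+$ and reading off its $v_+\to0$ limit directly rather than invoking the Kato ODE, but this is a presentational rather than a mathematical difference.
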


\begin{corollary}[\cite{HLZ}]\label{matching2}
Under the hypotheses of Lemma \ref{matching},
\begin{equation}\label{limcon}
|\tilde W_1^{0+}(L+\delta)-\widetilde V_1| \le Ce^{-\eta L}
\end{equation}
and
\begin{equation}\label{wcon2}
|\widetilde W^{+}_1(L+\delta) -\widetilde W^{0+}_1(L+\delta)|\le Ce^{-\eta L}
\end{equation}
as $v_+\to 0$, uniformly in $\lambda$, where $C$, $\eta>0$ are
independent of $L$ and $\widetilde W_1^{0+}$ is the solution
of the limiting adjoint eigenvalue system
appearing in definition \eqref{duallimD} of $D^0$.
\end{corollary}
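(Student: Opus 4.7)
The plan is to reduce both assertions of the corollary to Lemma \ref{matching} and a single application of the triangle inequality. The key observation is that the limiting profile $\hat v^0$ and the limiting eigenvalue system governed by $A^0(x,\lambda)$ are of exactly the same structural form as the $v_+$-dependent ones: $\hat v^0$ still decays exponentially to $0$ as $x\to +\infty$, and the limiting coefficient matrix $A^0$ is block upper-triangular in the same sense as $A$, with the same uniform spectral gap \eqref{fneg} since $f^0(\hat v^0)\le -3/4$ for $x$ large. Therefore the boundary-layer analysis of Section \ref{singular} (preliminary transformation, dynamic triangularization, and the conjugation lemma) applies verbatim to the limiting system, with constants independent of the (absent) parameter $v_+$.

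First I would establish \eqref{limcon} by rerunning the proof of Lemma \ref{matching} on the limiting system. Concretely, at $v_+=0$ the static transformation of Lemma \ref{pretranslem} degenerates to the identity (since every $\theta_+$ contribution in \eqref{hata+} carries a factor $v_+$), so the dynamic correction $\tilde \Theta$ supplied by Lemma \ref{dyntranslem} is precisely what conjugates the limiting slow block to its constant-coefficient version, and Proposition \ref{conjugation} provides the uniform exponential bound \eqref{qdecay}. Tracing through exactly as in the proof of Lemma \ref{matching} (initializing the Kato-normalized dual mode at $x=+\infty$ in the direction $\widetilde V_1$ and transporting back to $x=L+\delta$) yields \eqref{limcon} with constants independent of $L$.

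Given \eqref{wcon} and \eqref{limcon}, assertion \eqref{wcon2} is immediate by triangle inequality:
\begin{equation*}
|\widetilde W_1^+(L+\delta)-\widetilde W_1^{0+}(L+\delta)|
\le |\widetilde W_1^+(L+\delta)-\widetilde V_1|
+ |\widetilde V_1 - \widetilde W_1^{0+}(L+\delta)|
\le 2C e^{-\eta L},
\end{equation*}
uniformly in $\lambda$ on compact subsets of $\R\lambda\ge 0$ and in $v_+$ sufficiently small.

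The only point requiring genuine care is the claim that the constants in Lemmas \ref{pretranslem}, \ref{dyntranslem}, and Proposition \ref{conjugation} can be chosen uniformly in $v_+\in[0,v_*]$ for some fixed small $v_*>0$; this is the main (mild) obstacle. It follows by inspection of the fixed-point equations \eqref{fix1} and \eqref{duhamel2}: the contraction constants depend only on $|\lambda|$, on the spectral gap \eqref{fneg}, and on the exponential decay rate of $\hat v-v_+$, all of which are bounded and bounded away from zero uniformly for $v_+$ small (by Proposition \ref{profdecay}). Hence the same $C,\eta$ work throughout, and both estimates follow.
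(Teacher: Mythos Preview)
Your proposal is correct, and the overall strategy---first establish \eqref{limcon}, then deduce \eqref{wcon2} from \eqref{wcon} and \eqref{limcon} by the triangle inequality---matches the paper's exactly. The difference lies in how you obtain \eqref{limcon}. You propose to rerun the full machinery of Section~\ref{singular} (static triangularization, dynamic triangularization, then conjugation) on the limiting system, noting that at $v_+=0$ the static step degenerates to the identity. This works, but the paper takes a shorter route: since the limiting adjoint matrix $-(A^0_+)^*$ is already upper block-triangular (the $(3,1)$ and $(3,2)$ entries vanish with $\hat v^0$ as $x\to+\infty$), one may apply Proposition~\ref{conjugation} directly to the full $3\times 3$ limiting adjoint system without any preliminary triangularization. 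The key observation is that $\widetilde V_1$ is an exact \emph{constant} solution of the constant-coefficient adjoint system $\tilde W'=-(A^0_+)^*\tilde W$ (it lies in the kernel of $-(A^0_+)^*$), so the conjugation lemma gives $\tilde W_1^{0+}(x)=(Id+O(e^{-\eta x}))\widetilde V_1$ in one stroke. Your approach is more uniform in treating the $v_+>0$ and $v_+=0$ cases by the same machinery; the paper's buys brevity by exploiting the special algebraic structure at $v_+=0$.
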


\begin{proof}[Proof of Lemma \ref{matching}]
First, make the independent coordinate change $x\to x-\delta$
normalizing the background wave to match the shock-wave case.
Making the dependent coordinate-change
\begin{equation}\label{dualL}
\tilde Z:= R^* \tilde W,
\end{equation}
$R$ as in \eqref{WZ}, reduces the adjoint equation
$\tilde W'=-A^*\tilde W$ to block lower-triangular form,
\begin{equation} \label{dualtri2}
\begin{aligned}
&\tilde Z'=-\tilde A^* \tilde Z=\\
&\,
\begin{pmatrix}
-\bar \lambda( J^T +
v_+\bb1 \theta_+ + \bb1 \tilde \Theta)^* &  0 \\
-\bar\lambda \bb1^T &
-f(\hat v)+\bar \lambda +\bar \lambda v_+ (\theta_+\bb1
+ \tilde \Theta \bb1)^* \\
\end{pmatrix}Z,
\end{aligned}
\end{equation}
with `` $\bar{ }$ '' denoting complex conjugate.

Denoting by $\tilde V^+_1$ a suitably normalized
element of the one-dimensional (slow) stable subspace
of $-\tilde A^*$, we find readily
(see \cite{HLZ} for further discussion)
that, without loss of generality,
\begin{equation}\label{dualVlim}
\tilde V^+_1 \to (0, 1, \bar \lambda (\gamma+\bar \lambda)^{-1})^T
\end{equation}
as $v_+\to 0$, while the associated eigenvalue $\tilde \mu_1^+\to 0,$
uniformly for $\lambda$ on an compact subset of $\R \lambda\ge 0$.
The dual mode $\tilde Z_1^+=R^* \tilde W_1^+$ is uniquely determined
by the property that it is asymptotic as $x\to +\infty$
to the corresponding constant-coefficient solution
$e^{\tilde \mu_1^+}\tilde V_1^+$
(the standard normalization of \cite{GZ,PZ,BHRZ}).

By lower block-triangular form \eqref{dualtri2}, the equations
for the slow variable $\tilde z^T:=(\tilde Z_1, \tilde Z_2)$ decouples
as a slow system
\begin{equation} \label{dualslowsys}
\tilde z'= -\Big(\lambda (J+v_+\bb1 \theta_+ + \bb1 \tilde \Theta) \Big)^*
\tilde z
\end{equation}
dual to \eqref{slowsys}, with solution operator
\begin{equation}\label{dualslowsoln}
P^*(x,\lambda)^{-1} e^{-\bar \lambda (J+v_+\bb1 \theta_+)^*)(x-y)}
P(y,\lambda)^{*}
\end{equation}
dual to \eqref{slowsoln}, i.e. (fixing $y=L$, say), solutions of general form
\begin{equation}\label{genform}
\tilde z(\lambda,x)=
P^*(x,\lambda)^{-1} e^{-\bar \lambda (J+v_+\bb1 \theta_+)^*)(x-y)}
\tilde v,
\end{equation}
$\tilde v \in \C^2$ arbitrary.

Denoting by
$$
\tilde Z_1^+(L):=R^*\tilde W_1^+(L),
$$
therefore, the unique (up to constant factor) decaying solution
at $+\infty$, and
$\tilde v_1^+:=((\tilde V_1^+)_1 , (\tilde V_1^+)_2)^T$,
we thus have evidently
$$
\tilde z_1^+(x,\lambda)=
P^*(x,\lambda)^{-1} e^{-\bar \lambda (J+v_+\bb1 \theta_+)^*)x}
\tilde v_1^+,
$$
which, as $v_+\to 0$, is uniformly bounded by
\begin{equation}\label{weakexp}
|\tilde z_1^+(x,\lambda)|\le Ce^{\epsilon x}
\end{equation}
for arbitrarily small $\epsilon>0$
and, by \eqref{dualVlim}, converges for $x$ less than or equal to
$X-\delta$ for any fixed $X$ simply to
\begin{equation}\label{simplelim}
\lim_{v_+\to 0}\tilde z_1^+(x,\lambda)=
P^*(x,\lambda)^{-1} (0,1)^T.
\end{equation}

Defining by $\tilde q:=(\tilde Z_1^+)_3$
the fast coordinate of $\tilde Z_1^+$, we have, by \eqref{dualtri2},
$$
\tilde q'
+\Big(f(\hat v)-\bar \lambda -(\lambda v_+ \theta_+\bb1 + \lambda
\tilde \Theta \bb1)^* \Big)
\tilde q=
\bar \lambda \bb1^T \tilde z_1^+,
$$
whence, by Duhamel's principle, any decaying solution is given by
$$
\tilde q(x,\lambda)=
\int_x^{+\infty} e^{\int_y^x a(z,\lambda, v_+)dz}\bar \lambda \bb1^T z_1^+(y) \, dy,
$$
where
$$
a(y,\lambda,v_+):=
-\Big(f(\hat v)-\bar \lambda -(\lambda v_+ \theta_+\bb1 + \lambda
\tilde \Theta \bb1)^* \Big).
$$
Recalling, for $\R \lambda \ge 0$, that $\R a \ge 1/2$, combining
\eqref{weakexp} and \eqref{simplelim},
and noting that $a$ converges uniformly on $y\le Y$ as $v_+\to 0$ for
any $Y>0$ to
$$
\begin{aligned}
a_0(y, \lambda)&:=
-f_0(\hat v)+\bar \lambda
+(\lambda\tilde \Theta_0 \bb1)^* \\
&=  (1+\bar \lambda)
+O(e^{-\eta y})
\end{aligned}
$$
we obtain by the Lebesgue
Dominated Convergence Theorem that
$$
\begin{aligned}
\tilde q(L,\lambda)&\to
\int_L^{+\infty} e^{\int_y^L a_0(z,\lambda)dz}\bar \lambda \bb1^T (0,1)^T \, dy\\
&=
\bar \lambda\int_L^{+\infty}
e^{(1+\bar \lambda)(L-y)+ \int_y^L O(e^{-\eta z})dz }
\, dy\\
&=
\bar \lambda
(1+\bar \lambda)^{-1}(1 +O(e^{-\eta L})).
\end{aligned}
$$
Recalling, finally, \eqref{simplelim}, and the fact that
$$
|P-Id|(L,\lambda), \,  |R-Id|(L,\lambda) \le Ce^{-\eta L}
$$
for $v_+$ sufficiently small, we obtain \eqref{wcon} as claimed.
\end{proof}

\begin{proof}[Proof of Corollary \ref{matching2}]
Again, make the coordinate change $x\to x-\delta$ normalizing
the background wave to match the shock-wave case.
Applying Proposition \ref{conjugation} to the limiting adjoint system
$$
\tilde W'=-(A^0)^* \tilde W=
\begin{pmatrix}0 & 0 & 0\\-\bar \lambda & 0 & 0\\
-1& -1 & 1+\bar \lambda \end{pmatrix}
\tilde W + O(e^{-\eta x})\tilde W,
$$
we find that, up to an $Id +O(e^{-\eta x})$ coordinate change,
$\tilde W_1^{0+}(x)$ is given by the exact solution
$\tilde W\equiv \tilde V_1$ of the limiting, constant-coefficient
system
$$
\tilde W'=-(A^0)^* \tilde W=
\begin{pmatrix}0 & 0 & 0\\-\bar \lambda & 0 & 0\\
-1& -1 & 1+\bar \lambda \end{pmatrix}
\tilde W.
$$
This yields immediately \eqref{limcon},
which, together with \eqref{wcon}, yields \eqref{wcon2}.
\end{proof}

\subsection{Convergence to $D^0$}\label{convergence}

The rest of our analysis is standard.

\begin{lemma}\label{regconj}
On $x\le L-\delta$ for any fixed $L>0$, there exists a coordinate-change
$W=TZ$ conjugating \eqref{evans_ode} to the limiting equations
\eqref{limevans_ode}, $T=T(x,\lambda, v_+)$, satisfying a uniform bound
\begin{equation}\label{Tbd}
|T-Id|\le C(L)v_+
\end{equation}
for all $v_+> 0$ sufficiently small.
\end{lemma}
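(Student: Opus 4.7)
The plan is to treat this as a regular-perturbation statement on the regular region $x \le L-\delta$, where by Proposition \ref{profdecay} the profile $\hat v$ stays bounded above and uniformly bounded away from the singular value $0$ independently of $v_+$ sufficiently small. The strategy has two parts: first, verify that the coefficient matrices satisfy $|A(x,\lambda) - A^0(x,\lambda)| = O(v_+)$ uniformly on the region, and second, construct the conjugator by solving a standard matrix ODE and closing a Gronwall estimate.

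For the coefficient convergence, I would establish
\[
\sup_{x \le L-\delta}\,|A(x,\lambda) - A^0(x,\lambda)| \le C(L)\, v_+
\]
uniformly for $\lambda$ in compact subsets of $\R\lambda \ge 0$. This has two ingredients. First, the profile convergence: by \eqref{RH}, $a = O(v_+^\gamma) = O(v_+)$ (using $\gamma \ge 1$, $v_+ \le 1$), so the profile ODE \eqref{profeq} differs from $(\hat v^0)' = \hat v^0(\hat v^0 - 1)$ by the forcing term $a\,\hat v(\hat v^{-\gamma}-1) = O(v_+)$ on the region where $\hat v$ is bounded away from $0$; a Gronwall comparison then yields $|\hat v - \hat v^0| \le C(L) v_+$. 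Second, the convergence of the scalar function $f$: from \eqref{feq2}, $f(\hat v)$ differs from $f^0(\hat v) = 2\hat v - 1$ by terms involving $v_+^\gamma$ and $(v_+/\hat v)^\gamma$, both $O(v_+)$ thanks to the uniform lower bound on $\hat v$.

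Next, I would define $T = T(x,\lambda,v_+)$ as the unique solution of the matrix initial-value problem
\[
T' = A(x,\lambda)\, T - T\, A^0(x,\lambda), \qquad T(L - \delta) = I.
\]
A direct computation shows that if $Z$ solves $Z' = A^0 Z$, then $W := TZ$ solves $W' = AW$, so $T$ is the desired conjugator. Writing $T = I + \tilde T$, the perturbation satisfies
\[
\tilde T' = (A - A^0) + A\tilde T - \tilde T A^0, \qquad \tilde T(L-\delta) = 0,
\]
a linear inhomogeneous matrix ODE with coefficients bounded uniformly in $v_+$ and $\lambda$ on the region and with forcing of size $O(v_+)$ by the previous step. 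Gronwall's inequality, applied on the compact portion of the interval (and using the exponential decay of $A - A^0$ to $0$ as $x \to -\infty$, where both $\hat v$ and $\hat v^0$ tend to $v_- = 1$, to absorb any unbounded tail), yields $|\tilde T(x)| \le C(L)\, v_+$ on $x \le L - \delta$, which is the claimed bound.

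The only nontrivial step is the first one: one must verify that the $\hat v^{-\gamma}$ factors appearing in both \eqref{profeq} and \eqref{feq2} are tamed by the uniform lower bound on $\hat v$ on the regular region, and that the Gronwall constant for the profile comparison remains finite for fixed $L$. Both points follow from Proposition \ref{profdecay}, which guarantees that $\hat v$ is bounded below by a positive constant depending on $L$ but independent of $v_+$; after this, the construction of $T$ and the $O(v_+)$ estimate are routine.
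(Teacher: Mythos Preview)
Your approach matches the paper's on the bounded portion of the interval: both set up the homological equation $S' - (AS - SA^0) = A - A^0$ for $S := T - I$ and close by Gronwall, using $|A - A^0| \le C(L)v_+$ on the compact part.

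The gap is on the semi-infinite tail $x \to -\infty$. First, a small factual point: $A - A^0$ does \emph{not} decay to $0$ there. Both profiles tend to $v_- = 1$, so the $(3,1)$ and $(3,2)$ entries agree in the limit, but the $(3,3)$ entry carries $f(\hat v) - f^0(\hat v^0) \to f(1) - f^0(1) = -a\gamma = O(v_+)$, which is small but nonzero. More seriously, even granting an $O(v_+)$ (or exponentially decaying) forcing, Gronwall initialized at the \emph{right} endpoint $x = L - \delta$ gives a bound of the form $|\tilde T(x)| \lesssim e^{C_0(L-\delta - x)} v_+$, which blows up as $x \to -\infty$. The homogeneous operator $\tilde T \mapsto A\tilde T - \tilde T A^0$ has bounded but non-decaying coefficients on the tail, so there is no mechanism in your argument to prevent exponential growth in the backward direction.

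The paper handles this by reversing the logic: on $(-\infty,0]$ (in shifted coordinates) it invokes the \emph{Convergence Lemma} of \cite{PZ}, a variant of the conjugation lemma (Proposition~\ref{conjugation}), which directly produces a conjugator with $|T - I| \le Cv_+$ uniformly on the tail from the exponential convergence of $A$, $A^0$ to their respective (close) constant limits $A_-$, $A^0_-$. This furnishes $S(0) = O(v_+)$, and \emph{then} Gronwall is run forward on the compact interval $[0,L]$. To repair your argument you would need either to invoke this lemma or to reconstruct it---for instance by conjugating each system to its constant limit via Proposition~\ref{conjugation} and then finding a constant intertwiner $T_0 = I + O(v_+)$ between $A_-$ and $A^0_-$.
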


\begin{proof}
Make the coordinate change $x\to x-\delta$ normalizing the background profile.
For $x\in (-\infty, 0]$, this is a consequence of the
{\it Convergence Lemma} of \cite{PZ}, a variation on
Proposition \ref{conjugation}, together with uniform
convergence of the profile and eigenvalue equations.
For $x\in [0,L]$, it is essentially continuous dependence;
more precisely, observing that
$ |A-A^0|\le C_1(L)v_+$ for $x\in [0,L]$,
setting $S:=T-Id$, and writing the
homological equation expressing conjugacy of \eqref{evans_ode}
and \eqref{limevans_ode}, we obtain
$$
S'- (AS-SA^0)= (A-A^0),
$$
which, considered as an inhomogeneous linear matrix-valued equation, yields
an exponential growth bound
\[
S(x)\le e^{Cx}(S(0)+ C^{-1}C_1(L)v_+)\]
for some $C>0$, giving the result.
\end{proof}

\begin{proof}[Proof of Theorem \ref{mainthm}: inflow case]
Make the coordinate change $x\to x-\delta$ normalizing the background profile.
Lemma \ref{regconj}, together with convergence as $v_+\to 0$
of the unstable subspace of $A_-$ to the unstable subspace
of $A^0_-$ at the same rate $O(v_+)$ (as follows by spectral separation
of the unstable eigenvalue of $A^0$ and standard matrix
perturbation theory) yields
\begin{equation}\label{Wbd}
|W_1^0(0,\lambda)- W_1^{00}(0,\lambda)|\le C(L)v_+.
\end{equation}

Likewise, Lemma \ref{regconj} gives
\begin{equation}\label{tildeWbd}
\begin{aligned}
|\tilde W_1^+(0,\lambda)- \tilde W_1^{0+}(0,\lambda)|&\le
C(L)v_+
|\tilde W_1^+(0,\lambda)|\\
&\quad +
|S_0^{L \to 0}|
|\tilde W_1^+(L,\lambda)- \tilde W_1^{0+}(L,\lambda)|,
\end{aligned}
\end{equation}
where $S_0^{y\to x}$ denotes the solution operator of
the limiting adjoint eigenvalue equation $\tilde W'=-(A^0)^*\tilde W$.
Applying Proposition \ref{conjugation} to the limiting system, we obtain
$$
|S_0^{L\to 0}|\le C_2e^{-A^0_+ L}\le C_2L|\lambda|
$$
by direct computation of $e^{-A^0_+ L}$, where $C_2$ is independent of $L>0$.
Together with \eqref{wcon2} and \eqref{tildeWbd}, this gives
$$
|\tilde W_1^+(0,\lambda)- \tilde W_1^{0+}(0,\lambda)|\le
C(L)v_+
|\tilde W_1^+(0,\lambda)| + L|\lambda|C_2Ce^{-\eta L},
$$
hence, for $|\lambda|$ bounded,
\begin{equation}\label{lastbd}
\begin{aligned}
|\tilde W_1^+(0,\lambda)- \tilde W_1^{0+}(0,\lambda)|&\le
C_3(L)v_+ |\tilde W_1^{0+}(0,\lambda)| + LC_4e^{-\eta L}\\
&\le
C_5(L)v_+  + LC_4e^{-\eta L}.\\
\end{aligned}
\end{equation}
Taking first $L\to \infty$ and then $v_+\to 0$, we obtain
therefore convergence of $W^+_1(0,\lambda)$ and $\tilde W_1^+(0,\lambda)$ to
$W^{0+}_1(0,\lambda)$ and $\tilde W_1^{0+}(0,\lambda)$, yielding the result
by definitions \eqref{adjevans} and \eqref{duallimD}.
\end{proof}

\begin{proof}[Proof of Theorem \ref{mainthm}: outflow case]
Straightforward, following the previous argument in the regular
region only.
\end{proof}

\subsection{Convergence to the shock case}\label{shocklim}

\begin{proof}[Proof of Theorem \ref{main2}: inflow case]
First make the coordinate change $x\to x-\delta$ normalizing
the background profile location to that of the shock wave case,
where $\delta \to +\infty$ as $v_0\to 1$.
By standard duality properties,
$$
D_{\rm in}=
W^0_1 \cdot \tilde W_1^+|_{x=x_0}
$$
is independent of $x_0$, so we may evaluate at $x=0$ as in the
shock case.
Denote by ${\mathcal{W}}^-_1$, $\tilde {\mathcal{W}}_1^+$
the corresponding modes in the shock case, and
$$
\mathcal{D}=
\mathcal{W}^-_1\cdot \tilde {\mathcal{W}}_1^+|_{x=0}
$$
the resulting Evans function.

Noting that $\tilde {\mathcal{W}}^1_+$
and $\tilde W^1_+$ are asymptotic to the unique stable mode
at $+\infty$ of the (same) adjoint eigenvalue equation,
but with translated decay rates, we see immediately that
$ \tilde {\mathcal{W}}^+_1=\tilde W^1_+ e^{-\delta \tilde \mu_1^+}.  $
$W^0_1 $ on the other hand is initialized at $x=-\delta$
(in the new coordinates $\tilde x=x-\delta$)
as
$$
W^0_1(-\delta)=(1,0,0)^T,
$$
whereas ${\mathcal{W}}^-_1$ is the unique unstable mode at $-\infty$
decaying as $e^{\mu_1^-x}V_1^-$, where $ V_1^- $ is the unstable
right eigenvector of
$$
A_-=
\begin{pmatrix}
0 & \lambda & \lambda\\
0 & 0 & \lambda\\
1& 1 & f(1)-\lambda\\
\end{pmatrix}.
$$

Denote by $\tilde V^-_1$ the associated dual unstable left eigenvector and 
$$
\Pi^-_1:=V^-_1(\tilde V^-_1)^T
$$
the eigenprojection onto the stable vector $V^-_1$.
By direct computation,
$$
\tilde V^-_1=c(\lambda)(1, 1+\lambda/\mu^-_1,\mu^-_1)^T, 
\quad c(\lambda)\ne 0,
$$
yielding
\begin{equation}\label{goodproj}
\Pi^-_1 W^0_1=:\beta(\lambda)=c(\lambda)\ne 0
\end{equation}
for $\Re \lambda\ge 0$, on which $\Re \mu^-_1>0$.

Once we know \eqref{goodproj}, we may finish by a standard argument,
concluding by exponential attraction
in the positive $x$-direction of the unstable mode that other modes
decay exponentially as $x\to 0$, leaving the
contribution from $\beta(\lambda)V_1^-$ plus a negligible $O(e^{-\eta \delta})$
error, $\eta>0$, from which we may conclude that
${\mathcal{W}}^-_1|_{x=0}\sim \beta^{-1}e^{-\delta \mu_1^-} W^0_1|_{x=0}.$
Collecting information, we find that
$$
\mathcal{D}(\lambda)=
\beta(\lambda)^{-1}
e^{-\delta
(\bar \mu_1^-+ \tilde \mu_1^+)
(\lambda)} D_{\rm in}(\lambda) + O(e^{-\eta \delta}),
$$
$\eta>0$, yielding the claimed convergence as $v_0\to 1$, $\delta\to +\infty$.
\end{proof}

\begin{proof}[Proof of Theorem \ref{main2}: outflow case]
For $\lambda$ uniformly bounded from zero,
$\tilde W^0_1=(0,-1, -\bar \lambda /(\bar \lambda -\hat v'(0)))^T$
converges uniformly as $v_0\to 0$ to
$$
(0,-1, -1)^T,
$$
whereas the shock Evans function $\mathcal{D}$ is initiated by
$\tilde{\mathcal{ W}}^+_1$ proportional to
$$
\tilde{\mathcal{V}}^+_1= (0,-1, -1-\lambda)^T
$$
agreeing in the first two coordinates with $\tilde W^0_1$.
By the boundary-layer analysis of Section \ref{estW1}, the
backward (i.e., decreasing $x$) evolution of the adjoint
eigenvalue ODE reduces in the asymptotic
limit $v_+\to 0$ (forced by $v_0\to 0$) to a decoupled slow flow
$$
\tilde w'=\begin{pmatrix}0 & \bar \lambda\\ 0 & 0\end{pmatrix}w,
\qquad w\in \C^2
$$
in the first two coordinates, driving an exponentially slaved
fast flow in the third coordinate.
From this, we may conclude that solutions agreeing in the first
two coordinates converge exponentially as $x$ decreases.
Performing an appropriate normalization, as in the inflow case
just treated, we thus obtain the result.
We omit the details, which follow what has already been done
in previous cases.
\end{proof}

\subsection{The stability index}\label{stabsection}
Following \cite{SZ,GMWZ.5}, we note that $D_{\rm in}(\lambda)$ is
real for real $\lambda$, and nonvanishing for real $\lambda$
sufficiently large, hence $\sgn D_{\rm in}(+\infty)$ is well-defined
and constant on the entire (connected) parameter range. The number
of roots of $D_{\rm in}$ on $\Re \lambda \ge 0$ is therefore even or
odd depending on the {\it stability index}
$$
\sgn [D_{\rm in}(0)D_{\rm in}(+\infty)].
$$
Similarly, recalling that $D_{\rm out}(0)\equiv 0$, we find that the
number of roots of $D_{\rm out}$ on $\Re \lambda \ge 0$ is even or
odd depending on
$$
\sgn [D_{\rm out}'(0)D_{\rm out}(+\infty)].
$$

\begin{proof}[Proof of Lemma \ref{index}: inflow case]
Examining the adjoint equation at $\lambda=0$,
$$
\tilde W'=-A^*\tilde W,
\qquad
-A^*(x,0)=
\begin{pmatrix}
0 & 0 & -\hat v\\
0 & 0 & -\hat v\\
0 & 0 & -f(\hat v)\\
\end{pmatrix},
$$
$-f(v_+)>0$, we find by explicit computation that
the only solutions that are bounded as $x\to +\infty$ are
the {\it constant solutions} $\tilde W\equiv (a,b,0)^T$.
Taking the limit $V^+_1(0)$ as $\lambda \to 0^+$ along the real axis
of the unique stable eigenvector of $-A^*_+(\lambda)$,
we find (see, e.g., \cite{Z.3}) that it lies in the direction
$(1, 2+a_j^+,0)^T$, where $a_j^+>0$ is the positive characteristic
speed of the hyperbolic convection matrix
$\begin{pmatrix}
1 & - 1\\
-h(v_+)/v_+^{\gamma+1} & 1\\
\end{pmatrix}$, i.e.,
$V_1^-= c(v_0,v_+)(1, 2+a_j^+,0)^T$, $c(v_0,v_+)\ne 0$. Thus,
$D_{\rm in}(0)= V_1^-\cdot (1,0,0)^T=\bar c(v_0,v_+) \ne 0$ as
claimed. On the other hand, the same computation carried out for
$D^0_{\rm in}(0)$ yields $D^0_{\rm in}(0)\equiv 0$. (Note: $a_j\sim
v_+^{-1/2}\to +\infty$ as $v_+\to 0$.)
Similarly, as $v_0\to 0$,
$$
D^0_{\rm in} (\lambda)\to (1,0,0)^T\cdot
(0,1,*)^T\equiv 0.
$$ Finally, note $D_{\rm in}(0)\ne 0$ implies that the stability
index, since continuously varying so long as it doesn't vanish and
taking discrete values $\pm 1$, must be constant on the connected
set of parameter values.  Since inflow boundary layers are known to
be stable on some part of the parameter regime by energy estimates
(Theorem \ref{main2}), we may conclude that the stability index is
identically one and therefore there are an even number of unstable
roots for all $1>v_0\ge v_+>0$.

To establish that $(D^0_{\rm in})'(0)\ne 0$, we compute
$$
D^0_{\rm in}\; '(0) = (\partial_\lambda W_1^{00})\cdot
\widetilde{W}_1^{0+} + W_1^{00}\cdot (\partial_\lambda
\widetilde{W}_1^{0+}).
$$ Since $W_1^{00} \equiv (1,0,0)$ is independent of $\lambda$, we
need only show that the first component of $\partial_\lambda
\widetilde{W}_1^{0+}$ is nonzero. Note that $\partial_\lambda
W_1^{0+}$ solves the limiting adjoint variational equations

$$
(\partial_\lambda \widetilde{W}_1^{0+})'(0) + (A^0)^*(x,0)
\partial_\lambda \widetilde{W}_1^{0+} =  b(x)
$$ with
$$
(A^0)^*(x,0)\tilde =
\begin{pmatrix}
0 & 0 & \hat v^0\\
0 & 0 & \hat v^0\\
0 & 0 & f^0(\hat v^0)\\
\end{pmatrix}, \qquad b(x) = \begin{pmatrix}
0 \\
\hat v^0 + \hat u^0 - \frac{\hat v^0\, '}{\hat v^0} - 1 \\
3\hat v^0 - \frac{\hat v^0\, '}{\hat v^0} - 1 \end{pmatrix}.
$$ By \eqref{tildeV}, and the fact that 
$\partial_\lambda \tilde \mu_1^{0+}\equiv 0$,
 $\partial_\lambda \widetilde{W}_1^{0+}(x)$ is chosen so
that asymptotically at $x=+\infty$ it lies in the direction of
$\partial_\lambda \tilde V_1 = (0,0,-1)$. Set $\partial_\lambda
\widetilde{W}_1^{0+} = (\partial_\lambda \widetilde{W}_{1,\;1}^{0+},
\partial_\lambda \widetilde{W}_{1,\;2}^{0+}, \partial_\lambda
\widetilde{W}_{1,\;3}^{0+})^T$.  Then the third component solves
$$
(\partial_\lambda \widetilde{W}_{1,\;3}^{0+})' + \hat v ^0
\partial_\lambda \widetilde{W}_{1,\; 3}^{0+} =b_3 := 3\hat v^0 -
\frac{\hat v^0\, '}{\hat v^0} - 1
$$ which has solution
$$
\partial_\lambda \widetilde{W}_{1,\;3}^{0+}(x) = \partial_\lambda \widetilde{W}_{1,\;3}^{0+}(+\infty)\varphi(x) -
\varphi(x) \int_x^\infty \varphi^{-1}(y) b_3(y)dy
$$ where
$$
\varphi(x) = e^{\int_x^\infty \hat v^0(y) dy}.
$$  Integrating the equation for the first component of $\partial_\lambda \widetilde{W}_1^{0+}$ yields
\begin{align*}
\begin{split}
\partial_\lambda \widetilde{W}_{1,\;1}^{0+}(x) & = \partial_\lambda
\widetilde{W}_{1,\;1}^{0+}(+\infty) + \int_x^\infty \partial_\lambda
\widetilde{W}_{1,\;3}^{0+}(y) dy \\
&= \partial_\lambda \widetilde{W}_{1,\;1}^{0+}(+\infty) +
\partial_\lambda \widetilde{W}_{1,\;3}^{0+}(+\infty) \int_x^\infty
\hat v^0(y) \varphi(y) dy \\
& - \int_x^\infty \left( \varphi(y) \int_y^\infty \varphi^{-1}(z)
b_3(z) dz \right) dy.
\end{split}
\end{align*} Using the condition $\partial_\lambda \widetilde{W}_{1}^{0+}(+\infty) =
(0,0,-1)^T$ we have $\partial_\lambda
\widetilde{W}_{1,\;1}^{0+}(+\infty) = 0, \partial_\lambda
\widetilde{W}_{1,\;3}^{0+}(+\infty) = -1$ so that

$$
\partial_\lambda \widetilde{W}_{1,\;1}^{0+}|_{x=0} = - \int_0^\infty \hat v^0(y) \varphi(y) dy - \int_x^\infty \left(
\varphi(y) \int_y^\infty \varphi^{-1}(z) b_3 dz \right) dy.
$$  Finally, note that by using \eqref{v^0} we have $b_3=1-\tanh(\frac{x-\delta}{2})$ so that for all $x\geq 0$, $\varphi(x), b_3(x) \geq
0$ which implies
$$
D^0_{\rm in}\, '(0) = \partial_\lambda
\widetilde{W}_{1,\;1}^{0+}|_{x=0} \neq 0.
$$
\end{proof}

\begin{remark}\label{apparent}
The result $D_{\rm in}(0)\ne 0$ at first sight appears to contradict that
of Theorem \ref{main3}, since $\mathcal{D}(0)=0$ for
the shock wave case.  This apparent contradiction is explained
by the fact that the normalizing factor
$e^{-\delta (\bar \mu_1^- + \tilde \mu_1^+) }$
is exponentially decaying in $\delta$ for $\lambda=0$, since
$\tilde \mu_1^+(0)=0$, while $\Re\mu^-_1>0$.
Recalling that $\delta\to +\infty$ as $v_0\to 1$, we recover the result
of Theorem \ref{main3}.
\end{remark}

\begin{proof}[Proof of Lemma \ref{index}: outflow case]
Similarly, we compute
$$
D'_{\rm out}(0)=
\partial \lambda W^-_1 \cdot \tilde W^0_1,
$$
where $ \partial \lambda W^-_1|_{\lambda=0}$ satisfies the variational
equation $L\partial_\lambda U^-_1(0)=U^-_1=\hat U'$, or, written
as a first-order system,
$$
(\partial \lambda W^-_1)'- A(x,0) \partial \lambda W^-_1=
\begin{pmatrix}\hat u_x\\ \hat v_x\\ -\hat v_x \end{pmatrix},
\qquad
A(x,0)=
\begin{pmatrix} 0 & 0 & 0\\
0 & 0 & 0\\
\hat v & \hat v & f(\hat v)\\
\end{pmatrix},
$$
which may be solved exactly for the unique solution
decaying at $-\infty$ of
$$
W^-_1(0)=
\begin{pmatrix} 0 \\ 0\\ \hat v' \end{pmatrix},
\qquad
(\partial \lambda W^-_1)(0)=
\begin{pmatrix}\hat u- u_-\\ \hat v- v_-\\ * \end{pmatrix}.
$$
Recalling from \eqref{tildew1} that
$\widetilde{W}^0_1(\lambda)= (0, -1, -\lambda/(\lambda -\hat v'(0)))^T$,
hence
$$
\widetilde{W}^0_1(0)= (0, -1, 0)^T,
\qquad
\partial_\lambda \widetilde{W}^0_1(0)= (0, 0, 1/\hat v'(0))^T,
$$
we thus find that
$$
\begin{aligned}
D_{\rm out}'(0)&=\partial_\lambda W^-_1(0)\cdot \widetilde{W}^0_1(0)
+W^-_1(0)\cdot \partial_\lambda\widetilde{W}^0_1(0)
\\
&= -(\hat v(0)-1)+1 =2-v_0\ne 0
\end{aligned}
$$
as claimed.  The proof that $(D^0_{\rm out})'(0)\ne 0$ goes
similarly.

Finally, as in the proof of the inflow case, we note that nonvanishing
implies that the stability index is constant across the entire
(connected) parameter range, hence we may conclude that it is identically
one by existence of a stable case (Corollary \ref{v01}), and
therefore that the number of nonzero unstable roots is even, as claimed.
\end{proof}

\subsection{Stability in the shock limit}\label{smallv0}

\begin{proof}[Proof of Corollary \ref{v01}: inflow case]
By Proposition \ref{redenergy}
we find that $D_{\rm in}$
has at most a single zero in $\Re \lambda \ge 0$.  However,
by our stability index results, Theorem \ref{index}, the
number of eigenvalues in $\Re \lambda \ge 0$ is even.
Thus, it must be zero, giving the result.
\end{proof}

\begin{proof}[Proof of Corollary \ref{v01}: outflow case]
By Theorem \ref{main3}, $D_{\rm out}$, suitably renormalized,
converges as $v_0\to 0$ to the Evans function for the (unintegrated)
shock wave case.
But, the shock Evans function by the results of \cite{BHRZ,HLZ}
has just a single zero at $\lambda=0$ on $\Re \lambda\ge 0$,
already accounted for in $D_{\rm out}$
by the spurious root at $\lambda=0$ introduced by recoordinatization
to ``good unknown''.
\end{proof}

\subsection{Stability for small $v_0$}\label{corner}

Finally, we treat the remaining,
``corner case'' as $v_+$, $v_0$ simultaneously approach zero.  
The fact (Lemma \ref{index}) that
$$
\lim_{v_0\to 0}\lim_{v_+\to 0}D_{\rm in}(\lambda)\equiv 0
$$
shows that this limit is quite delicate; indeed, this
is the most delicate part of our analysis.

\begin{proof}[Proof of Theorem \ref{main2}: inflow case]
Consider again the adjoint system 
$$
\tilde W'=-A^*(x,\lambda)\tilde W,
\qquad
A^*(x,\lambda)\tilde =
\begin{pmatrix}
0 & 0 & \hat v\\
\bar \lambda & 0 & \hat v\\
\bar \lambda & \bar \lambda & f(\hat v) - \bar \lambda\\
\end{pmatrix}.
$$
By the boundary analysis of Section \ref{estW1},
$$
\tilde W=
\Big(\alpha, 1, \frac{\alpha \tilde \mu-\bar \lambda(\alpha +1)}
{-f(\hat v)+\bar \lambda}\Big)^T
+ O(e^{-\eta |x-\delta|}),
$$
where $\alpha:=\frac{\tilde \mu_+}{\tilde \mu_++ \bar \lambda}$,
and $\tilde \mu$ is the unique stable eigenvalue of $A^*_+$,
satisfying (by matrix perturbation calculation)
$$
\tilde \mu= \bar \lambda(v_+^{1/2} + O(v_+))
$$
and thus $\alpha =v_+^{1/2}+O(v_+)$
as $v_0\to 0$ (hence $v_+\to 0$) on bounded subsets of $\Re \lambda \ge 0$.
Combining these expansions, we have
$$
\tilde W_1(+\infty)=v_+^{1/2}(1+o(1)),
\qquad
\tilde W_3=
\frac{-\bar \lambda}
{-f(\hat v)+\bar \lambda}
(1 + o(1))
$$
for $v_0$ sufficiently small.

From the $\tilde W_1$ equation $\tilde W'= \hat v \tilde W_3$,
we thus obtain
$$
\begin{aligned}
\tilde W_1(0)&= 
\tilde W_1(+\infty)- \int_0^{+\infty}\hat v \tilde W_3(y)\, dy\\
&= (1+o(1))\times
\Big(
v_+^{1/2} + \int_0^{+\infty}
\frac{\bar \lambda \hat v}{-f(\hat v)+\bar \lambda}(y)\, dy \Big).
\end{aligned}
$$
Observing, finally, that, for $\Re \lambda \ge 0$, the
ratio of real to imaginary parts of 
$\frac{\bar \lambda \hat v}{-f(\hat v)+\bar \lambda}(y)$ is
uniformly positive, we find that $\Re \tilde W_1(0)\ne 0$ 
for $v_0$ sufficiently small, which yields nonvanishing of 
$D_{\rm in}(\lambda)$ on $\Re \lambda \ge 0$ as claimed.
\end{proof}

\section{Numerical computations}\label{computations}

In this section, we show, through a systematic numerical Evans function
study, that there are no unstable eigenvalues for
\[
(\gamma,v_+) \in[1,3]\times(0,1],
\]
in either inflow or outflow cases.  As defined in Section \ref{evanssec},
the Evans function is analytic in the right-half plane and reports a value
of zero precisely at the eigenvalues of the linearized operator
\eqref{linearized}.  Hence we can use the argument principle to determine
if there are any unstable eigenvalues for this system.  Our approach
closely follows that of \cite{BHRZ,HLZ} for the shock case with only two
major differences.  First, our shooting algorithm is only one sided as we
have the boundary conditions \eqref{w01} and \eqref{tildew1} for the
inflow and outflow cases, respectfully.  Second, we ``correct'' for the
displacement in the boundary layer when $v_0\approx 1$ in the inflow case
and $v_0\approx 0$ in the outflow case so that the Evans function
converges to the shock case as studied in \cite{BHRZ,HLZ} (see discussion
in Section \ref{sectionshocklimit}).

\begin{figure}[t]
\begin{center}
$\begin{array}{cc}
\includegraphics[width=6.4cm,height=4.5cm]{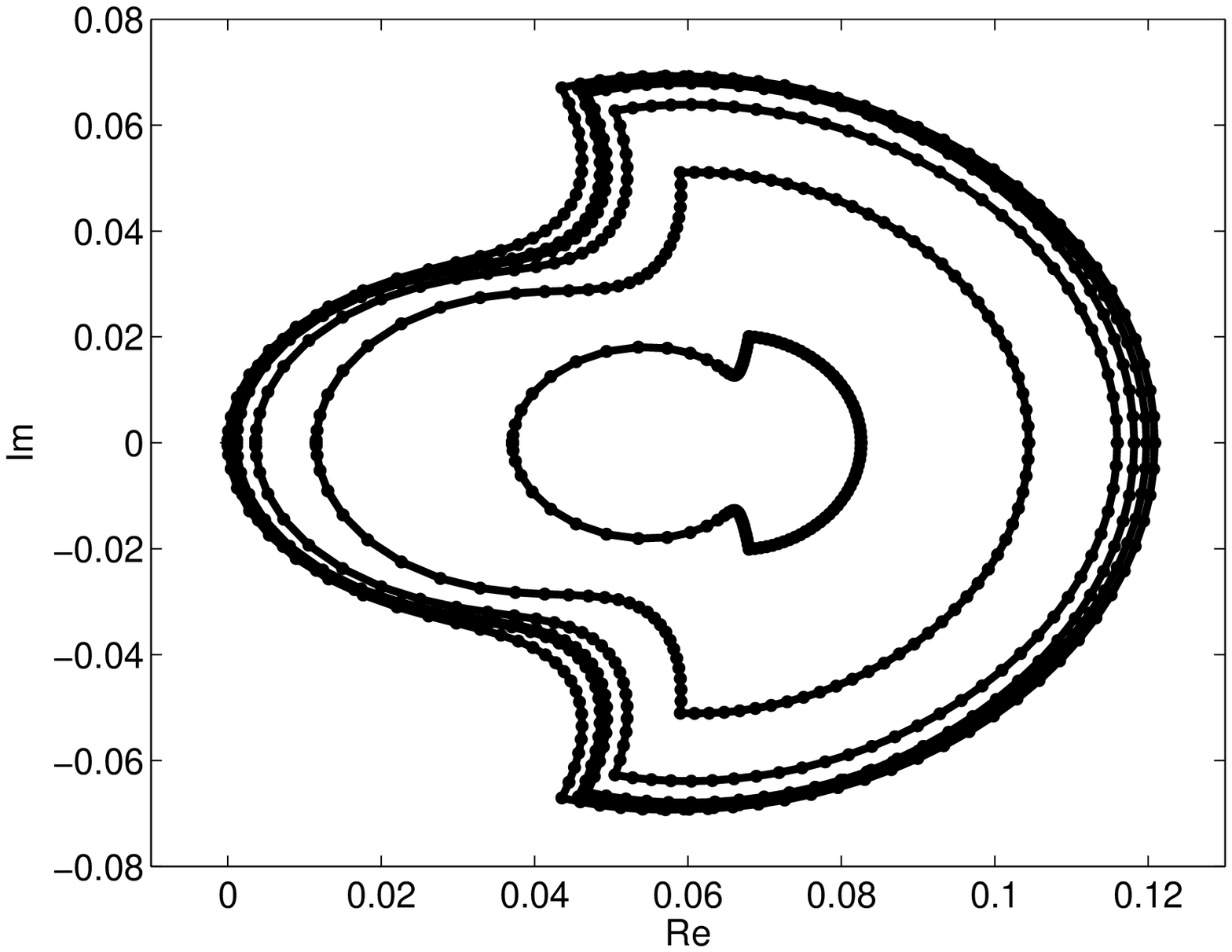} &
\includegraphics[width=6.4cm,height=4.5cm]{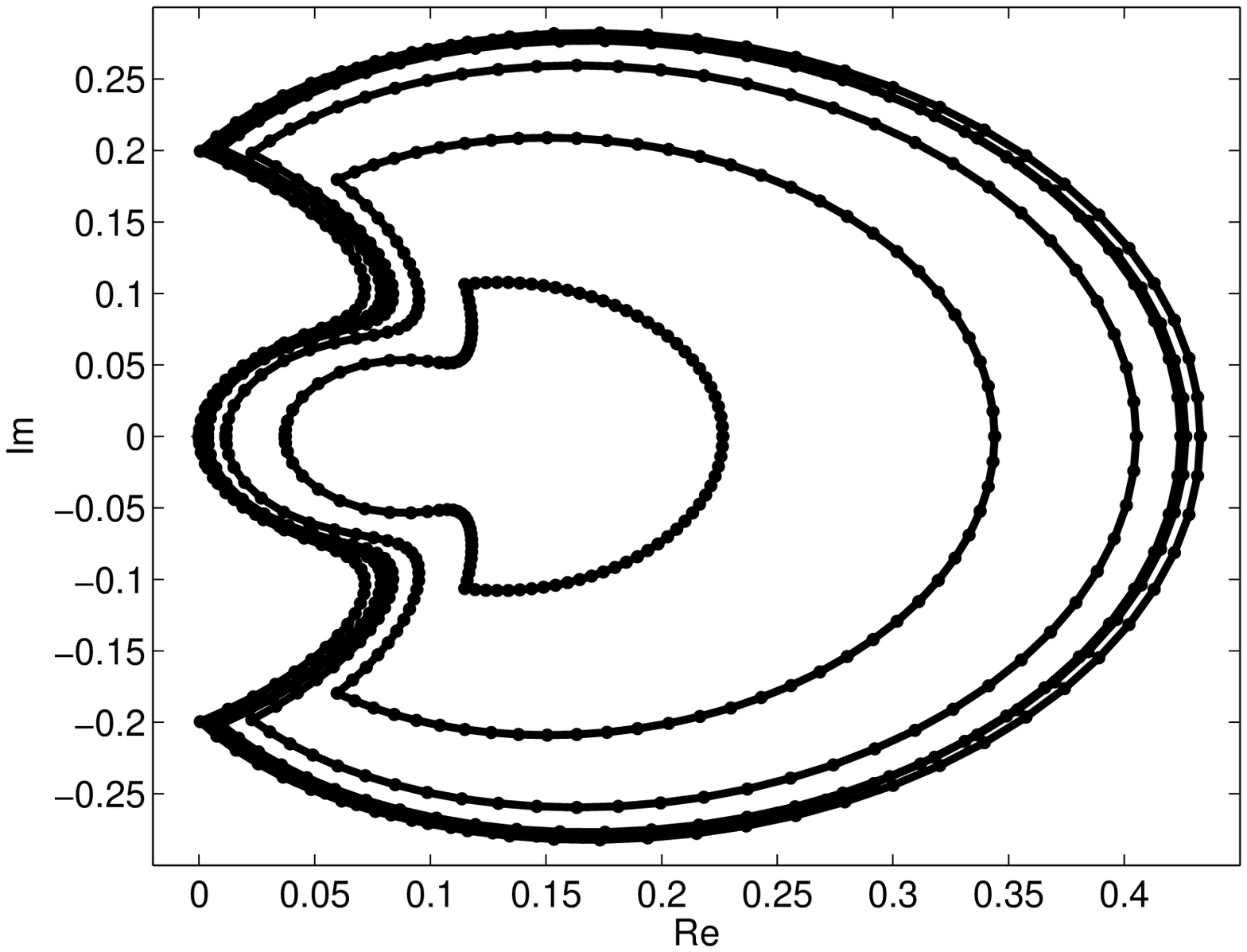} \\
\mbox{\bf (a)} & \mbox{\bf (b)}\\
\includegraphics[width=5.75cm]{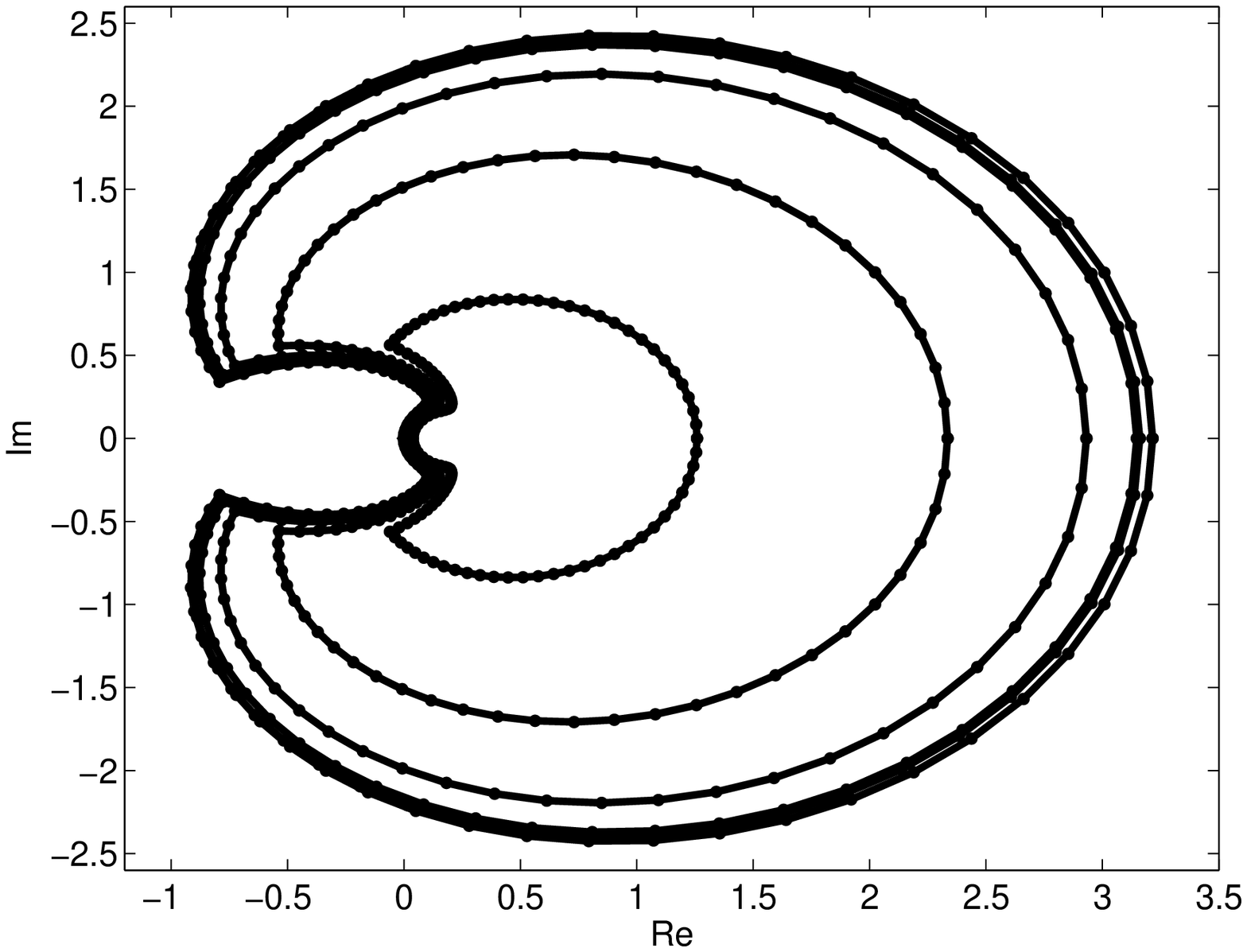} &
\includegraphics[width=5.6cm]{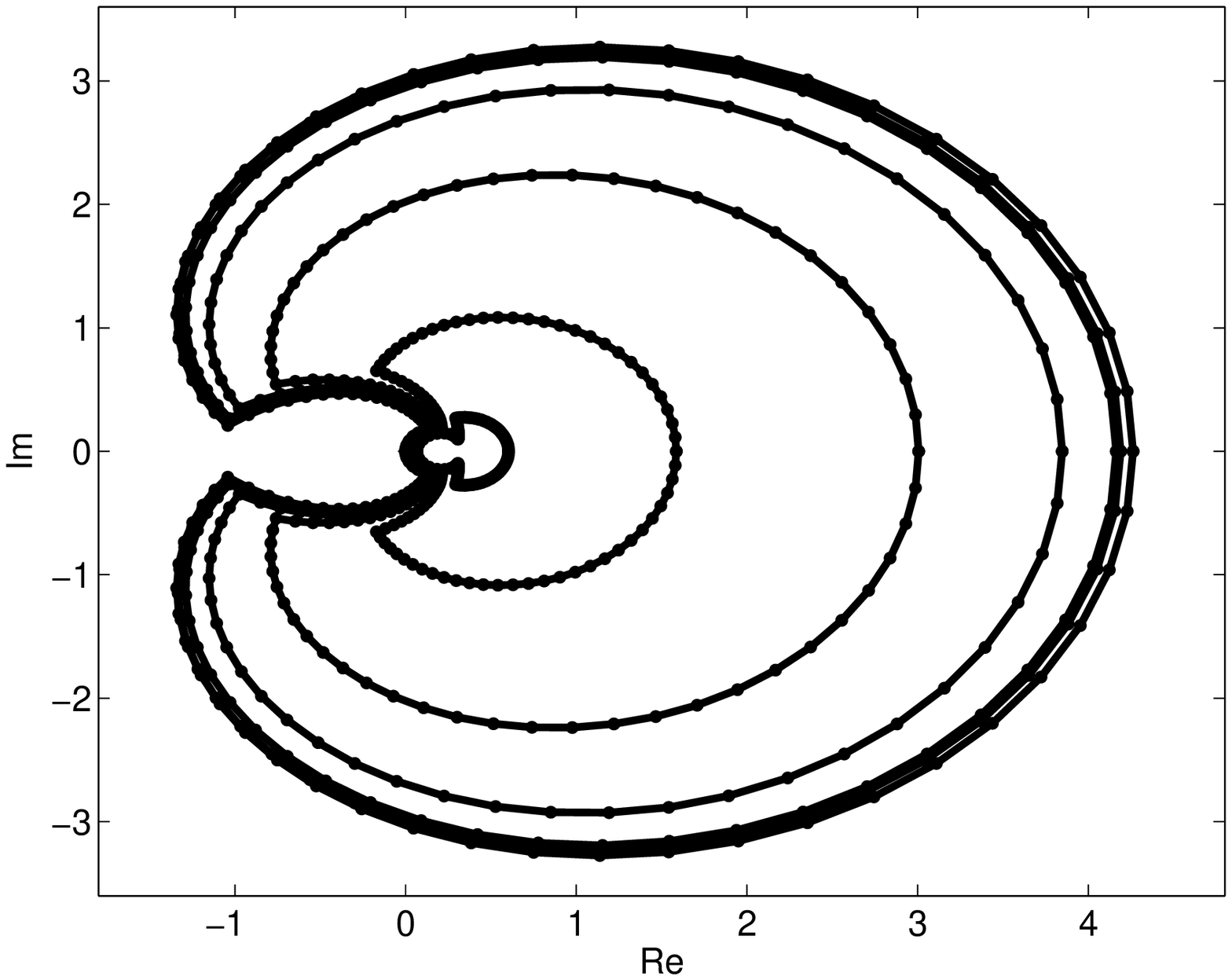} \\
\mbox{\bf (c)} & \mbox{\bf (d)}
\end{array}$
\end{center}
\caption{Typical examples of the inflow case, showing convergence to the
limiting Evans function as $v_+\to 0$ for a monatomic gas, $\gamma=5/3$,
with $(a)$ $v_0=0.1$, $(b)$ $v_0=0.2$, $(c)$ $v_0=0.4$, and $(d)$
$v_0=0.7$.  The contours depicted, going from inner to outer, are images
of the semicircle $\phi$ under $D$ for
$v_+=1e\!-\!2,1e\!-\!3,1e\!-\!4,1e\!-\!5,1e\!-\!6$, with the outer-most
contour given by the image of $\phi$ under $D^0$, that is, when $v_+= 0$. 
Each contour consists of $60$ points in $\lambda$.}
\label{first}
\end{figure}

The profiles were generated using Matlab's {\tt bvp4c} routine, which is
an adaptive Lobatto quadrature scheme.  The shooting portion of the Evans
function computation was performed using Matlab's {\tt ode45} package,
which is the standard 4th order adaptive Runge-Kutta-Fehlberg method
(RKF45).  The error tolerances for both the profiles and the shooting were
set to {\tt AbsTol=1e-6} and {\tt RelTol=1e-8}.  We remark that Kato's ODE
(see Section \ref{evanssec} and \cite{Kato,HSZ} for details) is used to
analytically choose the initial eigenbasis for the stable/unstable
manifolds at the numerical values of infinity at $L=\pm 18$.  Finally in Section 
\ref{errorstudy}, we carry out a numerical convergence study similar to that
in \cite{BHRZ}.

\begin{figure}[t]
\begin{center}
$\begin{array}{cc}
\includegraphics[width=5.75cm]{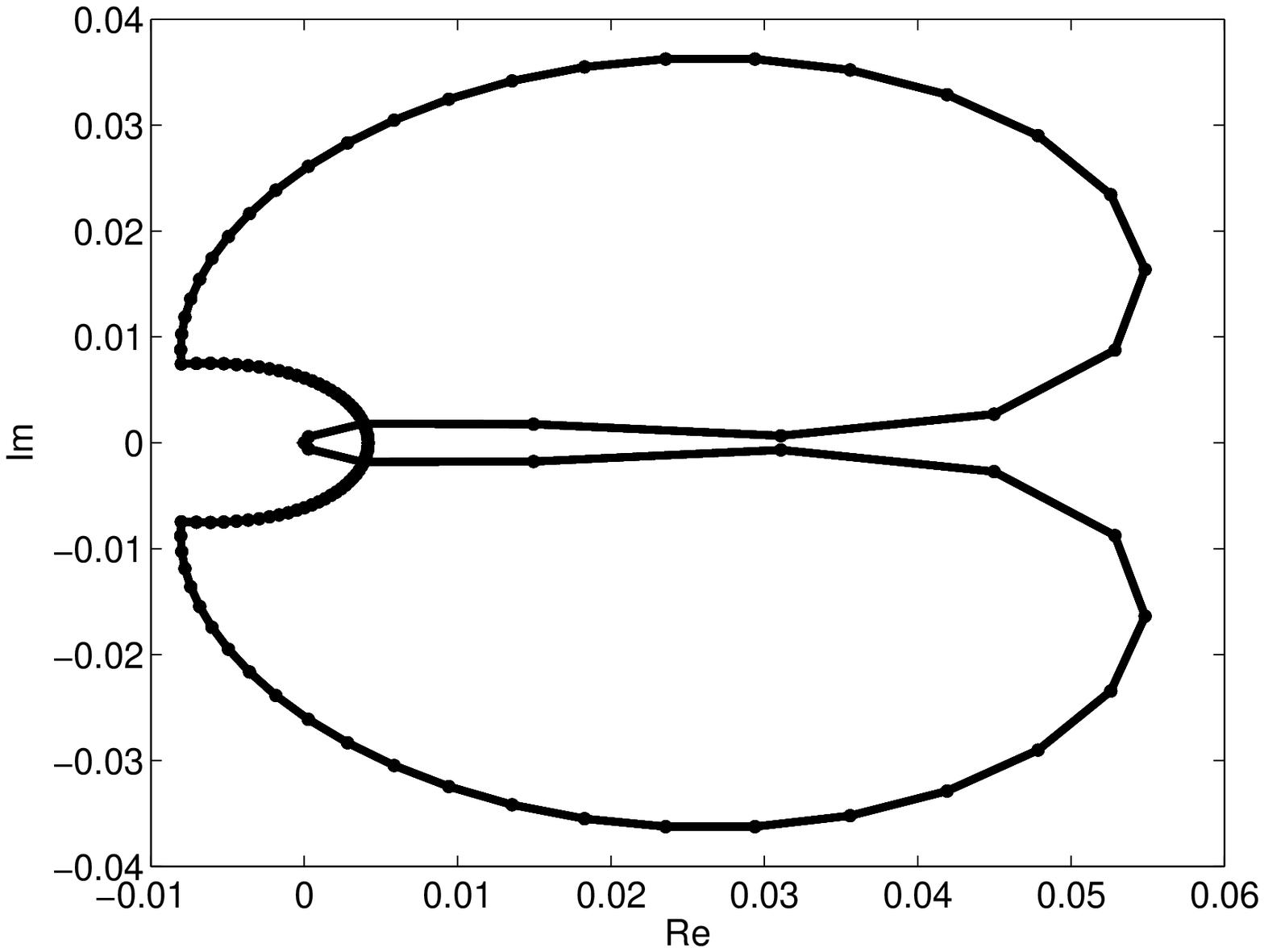} &
\includegraphics[width=5.75cm]{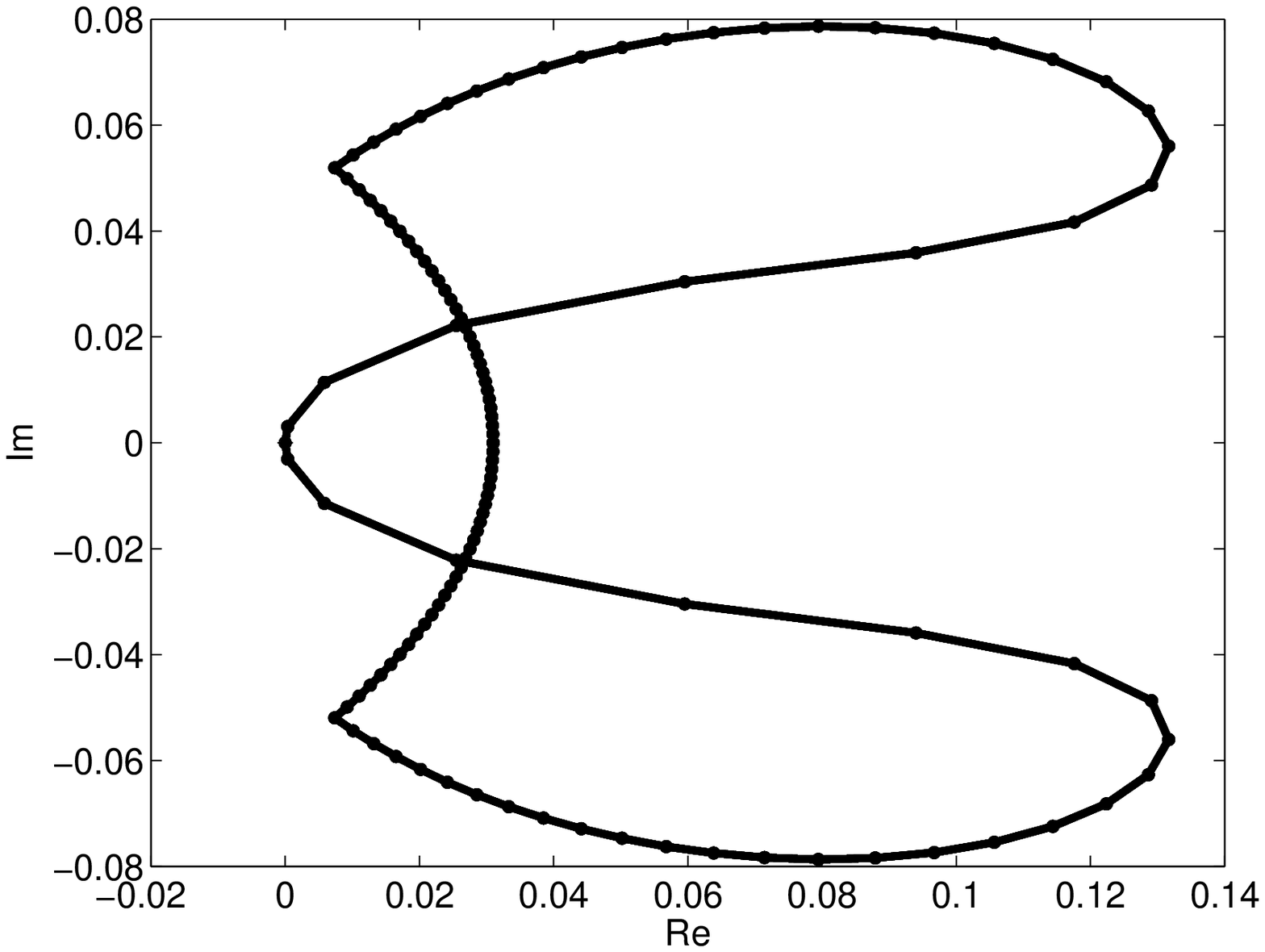} \\
\mbox{\bf (a)} & \mbox{\bf (b)}\\
\includegraphics[width=5.75cm]{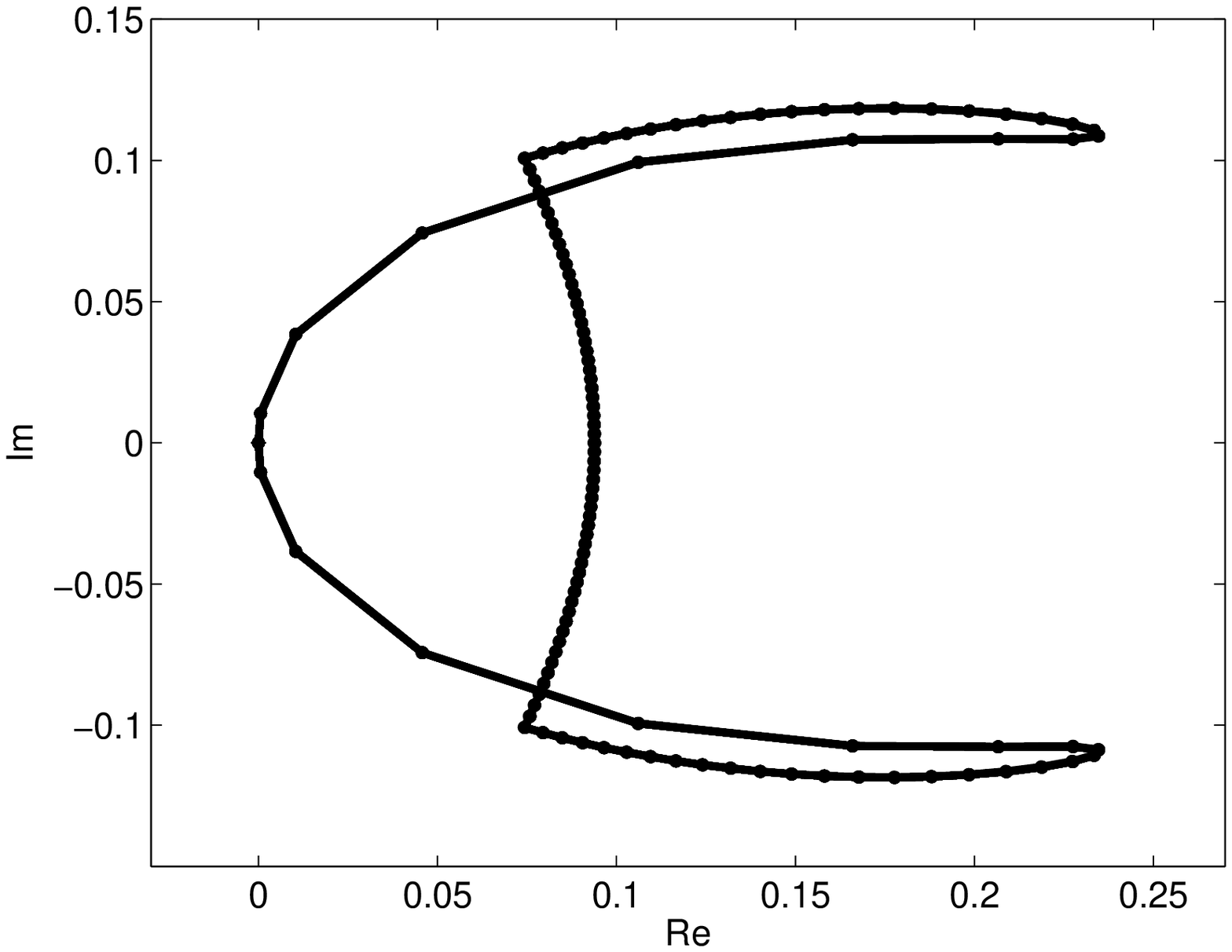} &
\includegraphics[width=5.3cm]{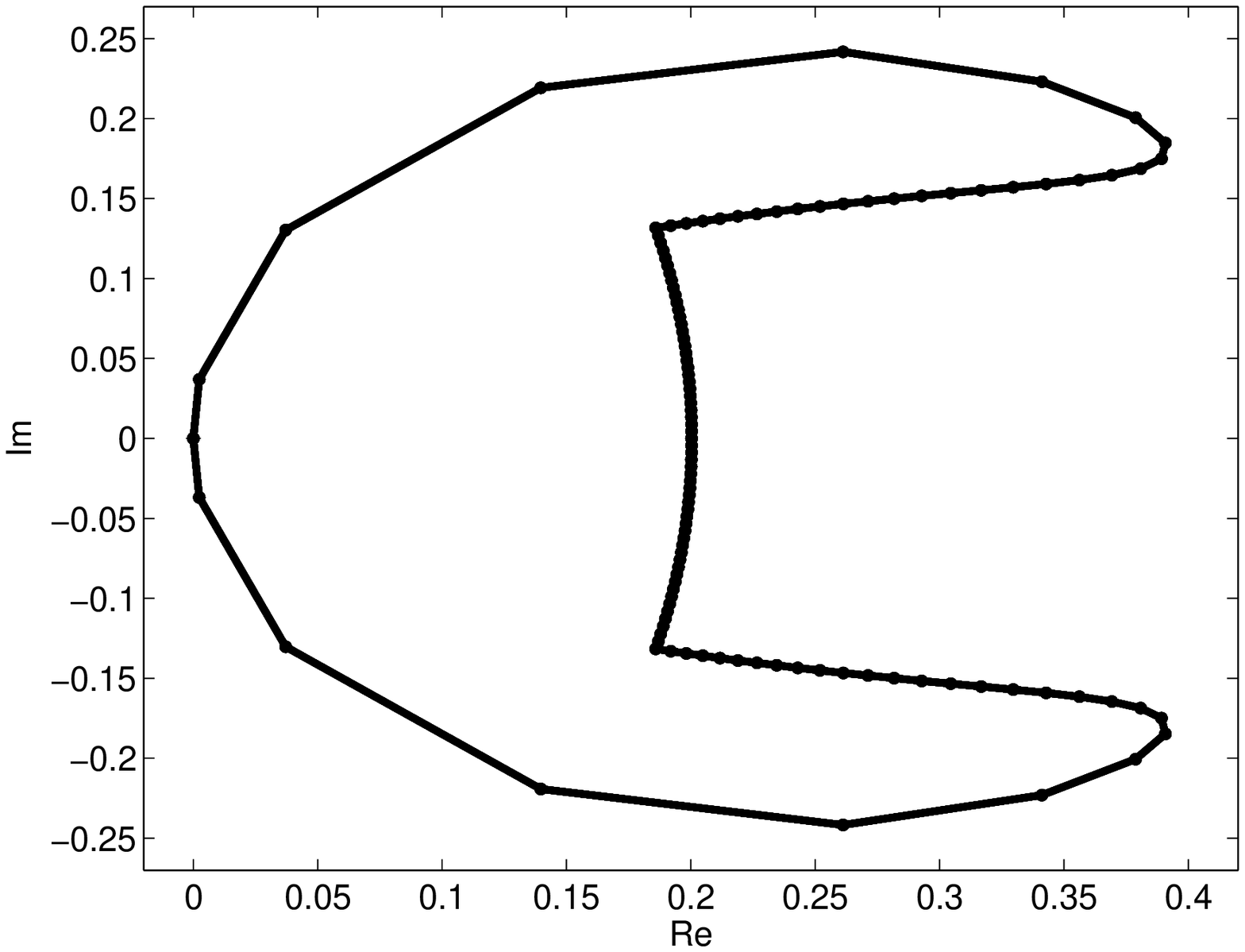} \\
\mbox{\bf (c)} & \mbox{\bf (d)}
\end{array}$
\end{center}
\caption{Typical examples of the outflow case, showing convergence to the
limiting Evans function as $v_+\to 0$ for a monatomic gas, $\gamma=5/3$,
with $(a)$ $v_0=0.2$, $(b)$ $v_0=0.4$, $(c)$ $v_0=0.6$, and $(d)$
$v_0=0.8$.  The contours depicted are images of the semicircle $\phi$
under $D$ for $v_+=1e\!-\!2,1e\!-\!3,1e\!-\!4,1e\!-\!5,1e\!-\!6$, and the
limiting case $v_+= 0$.  Interestingly the contours are essentially
(visually) indistinguishable in this parameter range. Each contour
consists of $60$ points in $\lambda$}
\label{second1}
\end{figure}

\subsection{Winding number computations}

The high-frequency estimates in Proposition \ref{hf} restrict the set of
admissible unstable eigenvalues to a fixed compact triangle $\Lambda$ in
the right-half plane (see \eqref{hfbounds1} and \eqref{hfbounds2} for the
inflow and outflow cases, respectively).  We reiterate the remarkable
property that $\Lambda$ does not depend on the choice of $v_+$ or $v_0$. 
Hence, to demonstrate stability for a given $\gamma$, $v_+$ and $v_0$, it
suffices to show that the winding number of the Evans function along a
contour containing $\Lambda$ is zero.  Note that in our region of
interest, $\gamma\in[1,3]$, the semi-circular contour given by
\[
\phi:=\partial(\{\lambda\mid \R\geq 0\}\cup\{\lambda\mid |\lambda|\leq 10\}),
\]
contains $\Lambda$ in both the inflow and outflow cases.  Hence, for
consistency we use this same semicircle for all of our winding number
computations.

A remarkable feature of the Evans function for this system, and one that
is shared with the shock case in \cite{BHRZ,HLZ}, is that the Evans function 
has limiting behavior as the amplitude increases, Section \ref{analytical}.  For
the inflow case, we see in Figure \ref{first}, the mapping of the contour $\phi$ 
for the monatomic case ($\gamma=5/3$), for several different choices of $v_0$, 
as $v_+\rightarrow 0$.  We remark that the winding numbers for $0\leq v_+\leq 1$ 
are all zero, and the limiting contour touches zero due to the emergence of a 
zero root in the limit.  Note that the limiting case contains the contours of all other 
amplitudes.  Hence, we have spectral stability for all amplitudes.

The outflow case likewise has a limiting behavior, however, all contours
cross through zero due to the eigenvalue at the origin.  Nonetheless,
since the contours only wind around once, we can likewise conclude that
these profiles are spectrally stable.  We remark that the outflow case
converges to the limiting case faster than the inflow case as is clear
from Figure \ref{second1}.
Indeed, $v_+=1e\!-\!2$ and the limiting case $v_+=0$, as well as all of
the values of $v_+$ in between, are virtually indistinguishable.

\begin{figure}[t]
\begin{center}
$\begin{array}{cc}
\includegraphics[width=5.75cm]{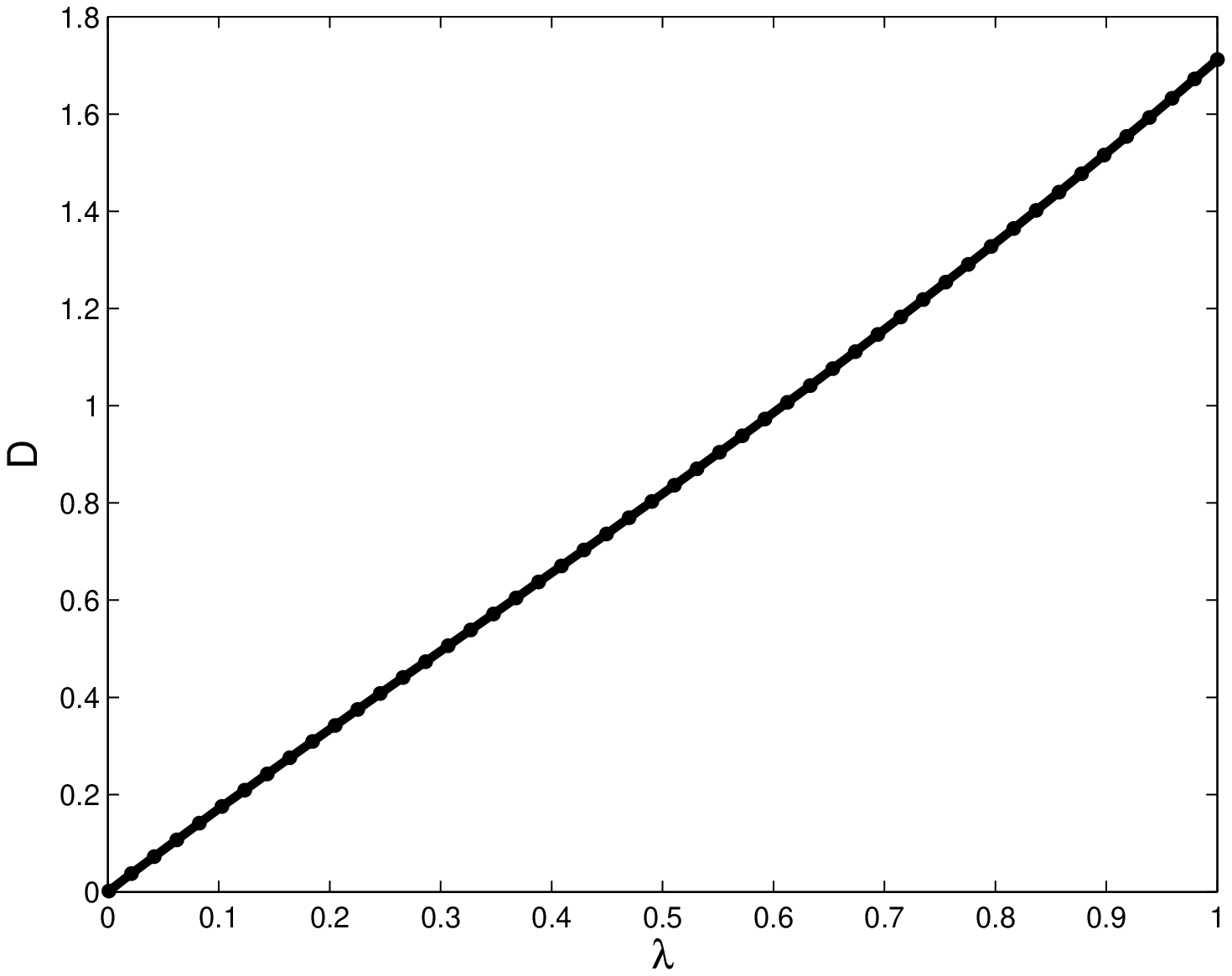} &
\includegraphics[width=5.75cm]{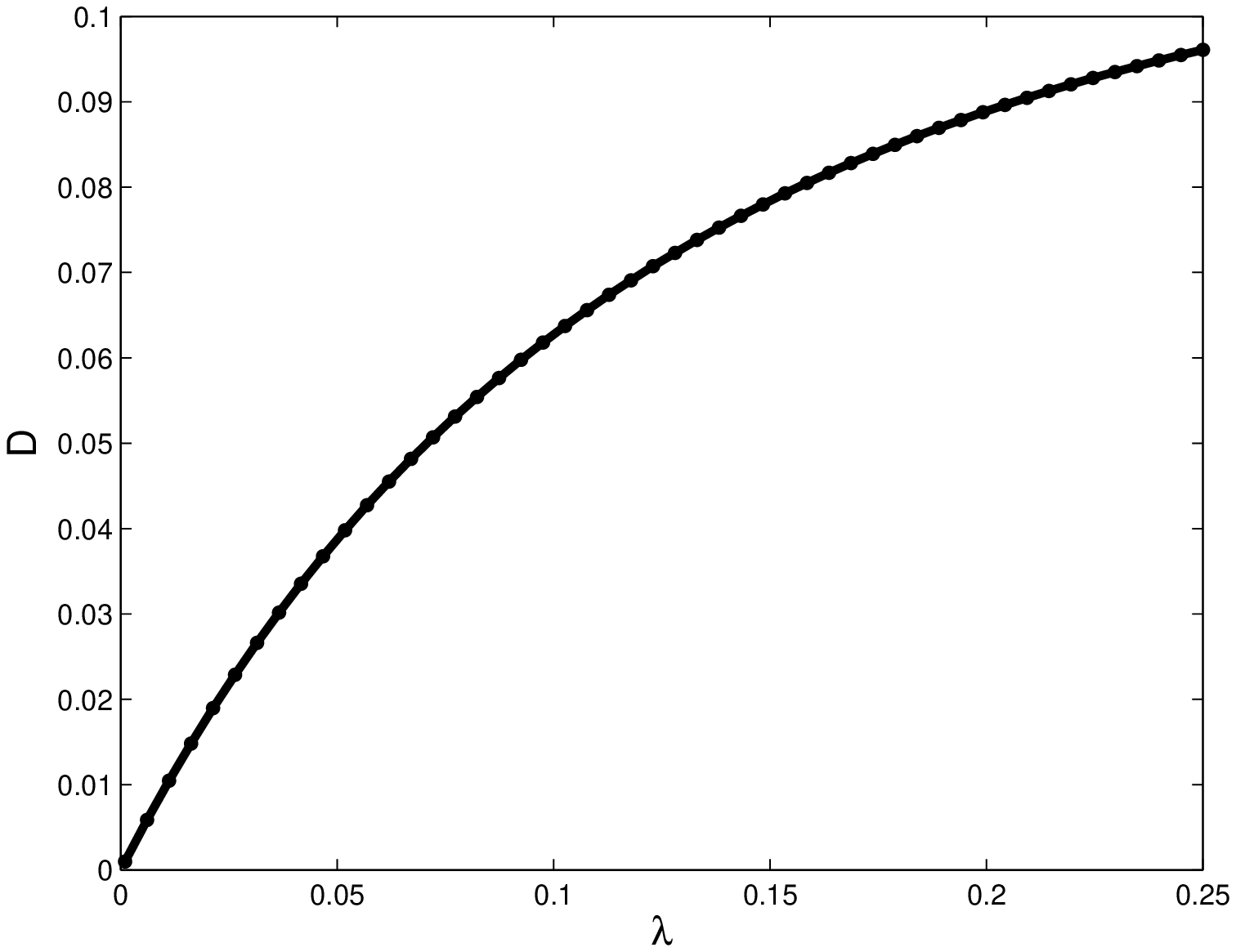} \\
\mbox{\bf (a)} & \mbox{\bf (b)}
\end{array}$
\end{center}
\caption{Typical examples of the Evans function evaluated along the
positive real axis.  The $(a)$ inflow case is computed for $v_0 = 0.7$ and
$v_0=0$ and $(b)$ the outflow case is computed for $v_0=0.3$ and
$v_+=0.001$.  Not the transversality at the origin in both cases.  Both
graphs consist of $50$ points in $\lambda$.}
\label{second2}
\end{figure}

In our study, we systematically varied $v_0$ in the interval $[.01,.99]$
and took the $v_+\rightarrow 0$ limit at each step, starting from a
$v_+=.9$ (or some other appropriate value, for example when $v_0<.9$) on
the small-amplitude end and decreased $v_+$ steadily to $10^{-k}$ for
$k=1,2,3,\ldots,6$, followed by evaluation at $v_+=0$.  For both inflow
and outflow cases, over $2000$ contours were computed.  We remark that in
the $v_+\rightarrow 0$ limit, the system becomes pressureless, and thus
all of the contours in the large-amplitude limit look the same regardless
of the value of $\gamma$ chosen.

\subsection{Nonexistence of unstable real eigenvalues}

As an additional verification of stability, we computed the Evans function
along the unstable real axis on the interval $[0,15]$ for varying
parameters to show that there are no real unstable eigenvalues.  Since the
Evans function has a root at the origin in the limiting system for the inflow case, 
and for all values of $v_+$ in the outflow case, we can perform in these cases a 
sort of  {\em numerical stability index analysis} to verify that the Evans function
cuts transversely through the origin and is otherwise nonzero, indicating
that there are no unstable real eigenvalues as expected.  In Figure
\ref{second2}, we see a typical example of $(a)$ the inflow and $(b)$
outflow cases.  Note that in both images, the Evans function cuts
transversally through the origin and is otherwise nonzero as $\lambda$
increases.

\begin{figure}[t]
\begin{center}
$\begin{array}{cc}
\includegraphics[width=5.4cm]{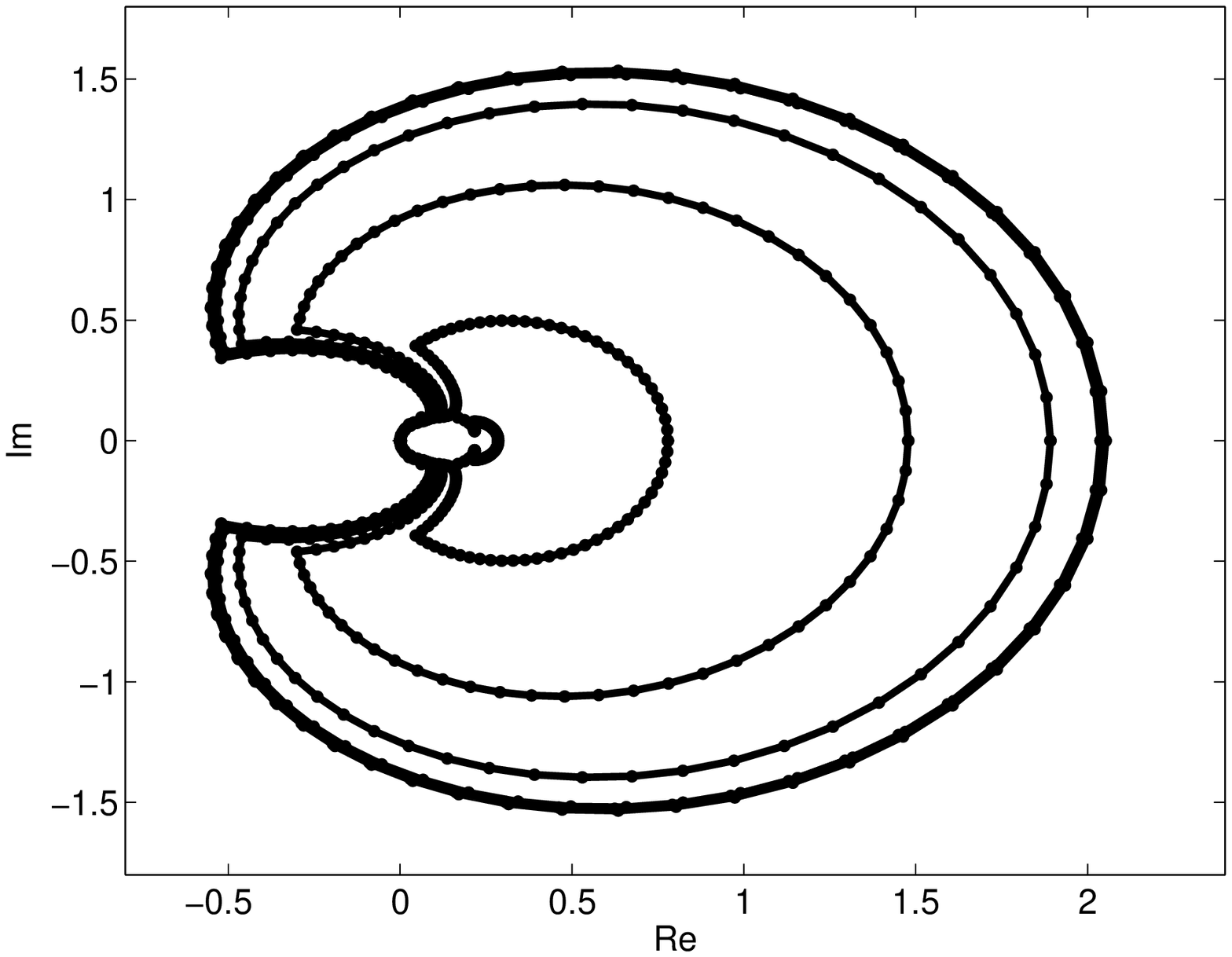} &
\includegraphics[width=5.9cm]{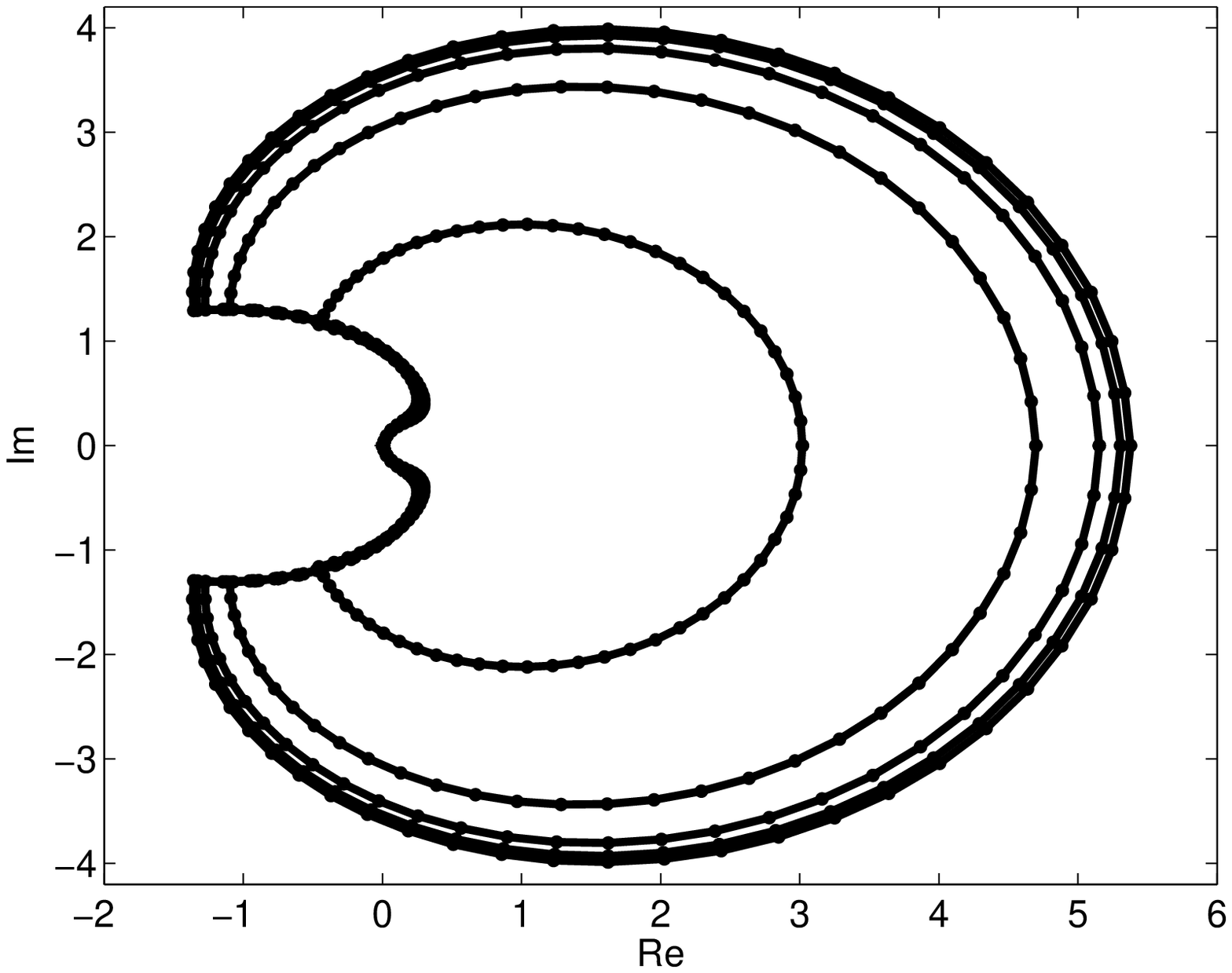} \\
\mbox{\bf (a)} & \mbox{\bf (b)}
\end{array}$
\end{center}
\caption{Shock limit for $(a)$ inflow and $(b)$ outflow cases, both for
$\gamma=5/3$.  Note that the images look very similar to those of
\cite{BHRZ,HLZ}. }
\label{second3}
\end{figure}

\subsection{The shock limit}
\label{sectionshocklimit}
When $v_0$ is far from the midpoint $(1-v_+)/2$ of the end states, the the
Evans function of the boundary layer is similar to the Evans function of
the shock case evaluated at the displacement point $x_0$.  Hence, when we
compute the boundary layer Evans function near the shock limits,
$v_0\approx 1$ for the inflow case and $v_0\approx 0$ for the outflow
case, we multiply for the correction factor $c(\lambda)$ so that our
output looks close to that of the shock case studied in \cite{BHRZ,HLZ}. 
The correction factors are
\[
c(\lambda) = e^{(-\mu^+ - \bar{\mu}^-)x_0}
\]
for the inflow case and
\[
c(\lambda) = e^{(-\bar{\mu}^+ - \mu^-)x_0},
\]
for the outflow case, where $\mu^-$ is the growth mode of $A_-(\lambda)$
and $\mu^+$ is the decay mode of $A_+(\lambda)$.  In Figure \ref{second3},
we see that these highly displaced profiles appear to be very similar to
the shock cases with one notable difference.  These images have a small
dimple near $\lambda=0$ to account for the eigenvalue there, 
whereas those in the shock case \cite{BHRZ,HLZ} were computed 
in integrated coordinates and thus have no root at the origin.

\begin{table}[t]
\begin{center}
\begin{tabular}{ccccccc}
\hline
\multicolumn{7}{c}{Inflow Case}\\
\hline
$L$ & $\gamma=1.2$ & $\gamma=1.4$ & $\gamma=1.666$ & $\gamma=2.0$ & $\gamma=2.5$ & $\gamma=3.0$ \\
\hline
8 & 7.8(-1) & 8.4(-1) & 9.2(-1) & 1.0(0) & 1.2(0) & 1.3(0) \\
10 & 1.4(-1) & 1.2(-1) & 9.2(-2) & 6.8(-2) & 4.4(-2) & 2.8(-2) \\
12 & 1.4(-2) & 7.9(-3) & 3.6(-3) & 1.3(-3) & 3.1(-4) & 7.3(-5) \\
14& 1.3(-3) & 4.9(-4) & 1.3(-4) & 2.4(-5) & 8.7(-6) & 8.2(-6)\\
16 & 1.2(-4) & 3.0(-5) & 4.7(-6) & 2.8(-6) & 2.7(-6) & 2.6(-6)\\
18& 1.1(-5) & 5.8(-6) & 8.0(-6) & 8.1(-6) & 8.0(-6) & 8.0(-6) \\
\hline
\multicolumn{7}{c}{Outflow Case}\\
\hline
$L$ & $\gamma=1.2$ & $\gamma=1.4$ & $\gamma=1.666$ & $\gamma=2.0$ & $\gamma=2.5$ & $\gamma=3.0$ \\
\hline
8 & 5.4(-3) & 5.4(-3) & 5.4(-3) & 5.4(-3) & 5.4(-3) & 5.4(-3)\\
10 & 9.2(-4) & 9.1(-4) & 9.1(-4) & 9.1(-4) & 9.1(-4) & 9.1(-4) \\
12 & 1.5(-4) & 1.5(-4) & 1.5(-4) & 1.5(-4) & 1.5(-4) & 1.5(-4) \\
14& 2.5(-5) & 2.7(-5) & 2.0(-5) & 2.0(-5) & 2.0(-5) & 2.0(-5) \\
16 & 2.3(-6) & 2.6(-6) & 2.6(-6) & 2.5(-6) & 2.5(-6) & 2.5(-6) \\
18& 6.6(-6) & 3.6(-6) & 8.7(-6) & 8.7(-6) & 8.7(-6) & 8.7(-6) \\
\hline
\end{tabular}
\caption{Relative errors in $D(\lambda)$ for the inflow and outflow cases are computed by taking the maximum relative error for 60 contour points evaluated along the semicircle $\phi$.  Samples were taken for varying $L$ and $\gamma$, leaving $v_+$ fixed at $v_+=10^{-4}$ and $v_0=0.6$.  We used $L=8,10,12,14,16,18,20$ and $\gamma=1.2,1.4,1.666,2.0$.  Relative errors were computed using the next value of $L$ as the baseline.}
\end{center}
\end{table}

\begin{table}[h]
\begin{center}
\begin{tabular}{ccccccc}
\hline
\multicolumn{7}{c}{Inflow Case}\\
\hline
\hline
Abs/Rel & $\gamma=1.2$ & $\gamma=1.4$ & $\gamma=1.666$ & $\gamma=2.0$ & $\gamma=2.5$ & $\gamma=3.0$ \\
\hline
$10^{-3}/10^{-5}$ & 5.4(-4) & 4.1(-4) & 4.0(-4) & 5.0(-4) & 3.4(-4) & 8.6(-4)\\
$10^{-4}/10^{-6}$ & 3.1(-5) & 4.6(-5) & 3.4(-5) & 3.3(-5) & 3.3(-5) & 3.2(-5)\\
$10^{-5}/10^{-7}$ & 2.9(-6) & 3.6(-6) & 3.9(-6) & 6.8(-6) & 2.7(-6) & 2.5(-6)\\
$10^{-6}/10^{-8}$ & 4.6(-7) & 9.9(-7) & 1.1(-6) & 6.0(-7) & 2.9(-7) & 3.2(-7)\\
\hline
\multicolumn{7}{c}{Outflow Case}\\
\hline
Abs/Rel & $\gamma=1.2$ & $\gamma=1.4$ & $\gamma=1.666$ & $\gamma=2.0$  & $\gamma=2.5$ & $\gamma=3.0$\\
\hline
$10^{-3}/10^{-5}$ & 9.2(-4) & 9.2(-4) & 9.1(-4) & 9.1(-4) & 9.1(-4) & 9.2(-4) \\
$10^{-4}/10^{-6}$ & 5.3(-5) & 4.9(-5) & 5.3(-5) & 5.3(-5) & 5.3(-5) & 5.3(-5)\\
$10^{-5}/10^{-7}$ & 6.7(-5) & 6.7(-5) & 6.7(-5) & 6.7(-5) & 6.7(-5) & 6.7(-5)\\
$10^{-6}/10^{-8}$ & 2.9(-6) & 2.9(-6) & 2.9(-6) & 2.9(-6) & 2.9(-6) & 2.9(-6)\\
\hline
\end{tabular}
\caption{Relative errors in $D(\lambda)$ for the inflow and outflow cases are computed by taking the maximum relative error for 60 contour points evaluated along the semicircle $\phi$.  Samples were taken for varying the absolute and relative error tolerances and $\gamma$ in the ODE solver, leaving $L=18$ and $\gamma=1.666$, $v_+=10^{-4}$, and $v_0=0.6$ fixed.  Relative errors were computed using the next run as the baseline.}
\end{center}
\end{table}

\subsection{Numerical convergence study}
\label{errorstudy}
As in \cite{BHRZ}, we carry out a numerical convergence study to show that our results are accurate.  We varied the absolute and relative error tolerances, as well as the length of the numerical domain $[-L,L]$.  In Tables 1--2, we demonstrate that our choices of $L=18$, {\tt AbsTol=1e-6} and {\tt RelTol=1e-8} provide accurate results.

\appendix

\section{Proof of preliminary estimate: inflow case}\label{basicproof}
Our starting point is Remark \ref{shockrel}, in which we observed
that the first-order eigensystem \eqref{evans_ode}
in variable $W=(w,u-v,v)^T$ may be converted by the
rescaling $W\to \tilde W:= (w,u-v, \lambda v)^T$ to a system identical
to that of the integrated equations in the shock case; see \cite{PZ}.
Artificially defining $(\tilde u, \tilde v, \tilde v')^T:= \tilde W$, we
obtain a system
\begin{subequations}\label{ep}
\begin{align}
&\lambda \tilde v + \tilde v' - \tilde u' =0, \label{ep:1}\\
&\lambda \tilde u + \tilde u' -  \frac{h(\bV)}{\bV^{\gamma+1}} \tilde v'
= \frac{\tilde u''}{\bV}.\label{ep:2}
\end{align}
\end{subequations}
identical to that in the integrated shock case \cite{BHRZ}, but with
boundary conditions
\begin {equation}\label{newbc}
\tilde v(0)=\tilde v'(0)=\tilde u'(0)=0
\end{equation}
imposed at $x=0$.
This new eigenvalue problem differs spectrally from \eqref{eigen1} only at $\lambda=0$, hence spectral stability of \eqref{eigen1} is implied by spectral stability of \eqref{ep}.
Hereafter, we drop the tildes, and refer simply to $u$, $v$.

With these coordinates, we may establish \eqref{hf} by exactly
the same argument used in the shock case in \cite{BHRZ,HLZ},
for completeness reproduced here.

\begin{lemma}
The following identity holds for $\R \lambda \geq 0$:
\begin{align}
(\R(\lambda) + |\I (\lambda)|) & \ip \bV |u|^2 + \ip |u'|^2\notag\\
 &\leq \sqrt{2} \ip \frac{h(\bV)}{\bV^\gamma} |v'| |u| +  \sqrt2\ip \bV |u'||u|\label{id1}.
\end{align}
\end{lemma}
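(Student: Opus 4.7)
The plan is to read \eqref{id1} as an energy identity obtained by pairing the momentum equation \eqref{ep:2} with a judiciously weighted test function. Observing that the unpleasant principal part $u''/\bV$ becomes simply $u''\bar u$ when multiplied by $\bV\bar u$, I would multiply \eqref{ep:2} by $\bV\bar u$ and integrate from $0$ to $+\infty$, obtaining
\[
\lambda \ip \bV |u|^2 + \ip \bV u'\bar u - \ip \frac{h(\bV)}{\bV^\gamma} v' \bar u = \ip u''\bar u .
\]

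Next I would integrate by parts on the right-hand side. The boundary term $[u'\bar u]_0^{\infty}$ vanishes because $u'(0)=0$ by the new boundary condition \eqref{newbc} at $x=0$, and because $u,u'\to 0$ as $x\to+\infty$ (since any eigenfunction of \eqref{ep} lies in the stable manifold at $+\infty$ for $\R\lambda\ge 0$, with a uniform exponential rate by the high-frequency analysis). Thus $\ip u''\bar u = -\ip|u'|^2$, and rearranging yields the complex identity
\[
\lambda \ip \bV |u|^2 + \ip |u'|^2 = -\ip \bV u' \bar u + \ip \frac{h(\bV)}{\bV^{\gamma}} v'\bar u =: B.
\]

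Finally I would split into real and imaginary parts. Since $\R\lambda\ge 0$, both terms on the left of $\R\!\big(\lambda\ip\bV|u|^2+\ip|u'|^2\big)=\R B$ are nonnegative, while $\I\lambda\cdot \ip\bV|u|^2=\I B$. Adding the real part to $\sgn(\I\lambda)$ times the imaginary part gives
\[
(\R(\lambda)+|\I(\lambda)|)\ip\bV|u|^2+\ip|u'|^2 = \R B + \sgn(\I\lambda)\,\I B \le |\R B|+|\I B| \le \sqrt{2}\,|B|,
\]
the last step being the elementary inequality for any $z\in\mathbb{C}$. A triangle-inequality bound
\[
|B|\le \ip\bV|u'||u|+\ip\frac{h(\bV)}{\bV^{\gamma}}|v'||u|
\]
then delivers exactly \eqref{id1}.

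There is no real obstacle here beyond picking the correct weight $\bV$ (so that the viscous term integrates by parts cleanly and the desired $\bV|u|^2$ weight appears on the $\lambda$-term) and justifying the vanishing of boundary contributions; the factor $\sqrt{2}$ (rather than $2$) is forced, and obtained, precisely by combining real and imaginary parts through the inequality $|\R z|+|\I z|\le\sqrt{2}\,|z|$ rather than bounding each piece separately.
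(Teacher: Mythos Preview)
Your proof is correct and follows essentially the same approach as the paper: multiply \eqref{ep:2} by $\bV\bar u$, integrate, integrate the viscous term by parts, and then combine real and imaginary parts via $|\R z|+|\I z|\le\sqrt{2}|z|$. One minor remark: your justification that $u,u'\to 0$ at $+\infty$ should appeal to the asymptotic structure of the limiting system $A_+(\lambda)$ rather than ``the high-frequency analysis,'' since Proposition~\ref{hf} is precisely what this lemma is being used to prove.
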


\begin{proof}
We multiply \eqref{ep:2} by $\bV {\bar u}$ and integrate along $x$.  This yields
\[
\lambda \ip \bV |u|^2 + \ip \bV u'\bar{u} + \ip |u'|^2 = \ip \frac{h(\bV)}{\bV^\gamma} v'\bar{u} .
\]
We get \eqref{id1} by taking the real and imaginary parts and adding them together, and noting that $|\R(z)| + |\I(z)| \leq \sqrt{2}|z|$.
\end{proof}

\begin{lemma}
\label{kawashima}
The following identity holds for $\R \lambda \geq 0$:
%
%
\begin{equation}
\label{id3}
\ip |u'|^2 = 2\R(\lambda)^2\ip|v|^2 + \R(\lambda)\ip \frac{|v'|^2}{\bV} + \frac{1}{2} \ip \left[\frac{h(\bV)}{\bV^{\gamma+1}} + \frac{a\gamma}{\bV^{\gamma+1}} \right] |v'|^2
\end{equation}
\end{lemma}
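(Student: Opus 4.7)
The plan is to derive \eqref{id3} by combining two integrated identities, one from \eqref{ep:1} and one from \eqref{ep:2} with multiplier $\bar v'$, and then to collapse the resulting coefficient of $|v'|^2$ using a pointwise algebraic identity between $h(\bV)$, $\bV'$, and $a\gamma$ that follows from the profile equation \eqref{profeq}.

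First, to produce $\int|u'|^2$ on the left, I square the relation $u' = \lambda v + v'$ from \eqref{ep:1} and integrate. The boundary condition $v(0)=0$ together with decay at $+\infty$ forces $\int v\bar v'$ to be purely imaginary (since $\int(v\bar v)'\,dx = 0$ implies $\int v\bar v' + \overline{\int v\bar v'} = 0$), so the cross term collapses to $(\lambda-\bar\lambda)\int v\bar v' = -2\I(\lambda)\,\I(\int v\bar v')$, yielding
$$\int|u'|^2 = |\lambda|^2\int|v|^2 - 2\I(\lambda)\,\I\!\left(\int v\bar v'\right) + \int|v'|^2.$$

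Next, to eliminate the unknown $\I(\int v\bar v')$, I multiply \eqref{ep:2} by $\bar v'$ and integrate. Using $u' = \lambda v + v'$ and, after differentiating \eqref{ep:1}, $u''=v''+\lambda v'$, together with integration by parts on $\int u''\bar v'/\bV$ (whose boundary terms vanish by $u'(0)=v'(0)=0$ and decay), I invoke the identity $2\R\int v''\bar v'/\bV = \int(|v'|^2)'/\bV = \int|v'|^2\bV'/\bV^2$. Taking real parts, noting $\R(\lambda\int v\bar v') = -\I(\lambda)\I(\int v\bar v')$ and $\R(\lambda^2)=\R(\lambda)^2-\I(\lambda)^2$, produces an identity that I solve for $-2\I(\lambda)\I(\int v\bar v')$ in terms of $\int|v|^2$, $\int|v'|^2$, $\int|v'|^2/\bV$, $\int h|v'|^2/\bV^{\gamma+1}$, and $\int|v'|^2\bV'/\bV^2$. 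Substituting into the first identity, the coefficient of $\int|v|^2$ becomes $|\lambda|^2 + (\R(\lambda)^2 - \I(\lambda)^2) = 2\R(\lambda)^2$, which is exactly the first term of \eqref{id3}, and the $\R(\lambda)\int|v'|^2/\bV$ term appears with the correct sign.

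Finally, the remaining combination $\int h|v'|^2/\bV^{\gamma+1} + \tfrac{1}{2}\int|v'|^2\bV'/\bV^2$ must collapse into $\tfrac{1}{2}\int(h+a\gamma)|v'|^2/\bV^{\gamma+1}$, which is equivalent to the pointwise identity $h(\bV) + \bV'\bV^{\gamma-1} = a\gamma$. This is verified directly: expanding $\bV' = \bV^2-(1+a)\bV + a\bV^{1-\gamma}$ from \eqref{profeq} and multiplying by $\bV^{\gamma-1}$ gives $\bV'\bV^{\gamma-1} = \bV^{\gamma+1}-(1+a)\bV^\gamma+a$; adding this to $h(\bV)=-\bV^{\gamma+1}+a(\gamma-1)+(a+1)\bV^\gamma$ cancels all $\bV$-dependent terms and leaves $a(\gamma-1)+a = a\gamma$. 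The main obstacle is not conceptual but bookkeeping: tracking real versus imaginary parts cleanly through the integrations by parts, and recognizing that the profile ODE supplies precisely the algebraic identity needed to absorb the extra $\bV'/\bV^2$ weight into the $(h+a\gamma)/\bV^{\gamma+1}$ form appearing in \eqref{id3}.
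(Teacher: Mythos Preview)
Your proof is correct and uses the same core ingredients as the paper: multiplying \eqref{ep:2} by $\bar v'$, taking real parts, and invoking the profile identity $h(\bV)+\bV'\bV^{\gamma-1}=a\gamma$ (equivalently \eqref{charI2}) to collapse the coefficient of $|v'|^2$. The only difference is organizational: the paper keeps $u$ in the left-hand side and extracts $\int|u'|^2$ via the rewriting $\lambda\int u\bar v' + \int u'\bar v' = 2\R(\lambda)\int u\bar v' - \int u\bar u''$ followed by integration by parts, whereas you immediately substitute $u' = \lambda v + v'$ (so $\int|u'|^2$ appears from the outset) and use the multiplied equation only to eliminate the cross term $\I(\lambda)\,\I\!\big(\int v\bar v'\big)$.
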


\begin{proof}
We multiply \eqref{ep:2} by ${\bar v'}$ and integrate along $x$.  This yields
\[
\lambda \ip u\bar{v}' + \ip u'\bar{v}' - \ip \frac{h(\bV)}{\bV^{\gamma+1}}|v'|^2 = \ip \frac{1}{\bV}u''\bar{v}' = \ip \frac{1}{\bV}(\lambda v' + v''){\bar v'}.
\]
Using \eqref{ep:1} on the right-hand side, integrating by parts, and taking the real part gives
\[
\R \left[ \lambda \ip u\bar{v}' + \ip u'\bar{v}'\right] = \ip \left[\frac{h(\bV)}{\bV^{\gamma+1}} + \frac{\bV_x}{2 \bV^2} \right] |v'|^2 + \R(\lambda)\ip \frac{|v'|^2}{\bV}.
\]
The right hand side can be rewritten as
%
%
\begin{equation}
\label{id3_1}
\R \left[ \lambda \ip u\bar{v}' + \ip u'\bar{v}'\right] = \frac{1}{2} \ip \left[\frac{h(\bV)}{\bV^{\gamma+1}} + \frac{a\gamma}{\bV^{\gamma+1}} \right] |v'|^2 + \R(\lambda)\ip \frac{|v'|^2}{\bV}.
\end{equation}
Now we manipulate the left-hand side.  Note that
\begin{align*}
\lambda \ip u\bar{v}' + \ip u'\bar{v}' &= (\lambda+\bar{\lambda}) \ip u\bar{v}' - \ip u(\bar{\lambda}\bar{v}' + \bar{v}'')\\
&= -2\R(\lambda) \ip u' \bar{v} - \ip u \bar{u}''\\
&= -2\R(\lambda) \ip (\lambda v + v') \bar{v} + \ip |u'|^2.
\end{align*}
Hence, by taking the real part we get
\[
\R \left[ \lambda \ip u\bar{v}' + \ip u'\bar{v}'\right] = \ip |u'|^2 - 2\R(\lambda)^2 \ip |v|^2.
\]
This combines with \eqref{id3_1} to give \eqref{id3}.
\end{proof}

\begin{lemma}[\cite{BHRZ}]\label{Hlem}
For $h(\bV)$ as in \eqref{f}, we have
\begin{equation}
\label{id2}
\sup_{\bV} \left| \frac{h(\bV)}{\bV^\gamma}\right| = \gamma
\frac{1-v_+}{1-v_+^\gamma}
\leq \gamma,
\end{equation}
where $\bV$ is the profile solution to \eqref{profeq}.
\end{lemma}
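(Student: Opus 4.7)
The plan is to verify the supremum by an elementary direct calculation: write $h(\bv)/\bv^\gamma$ as an explicit function of $\bv$ using the definition of $h$, show monotonicity in $\bv$, locate the extremum at an endpoint of the profile range, evaluate there, and then bound using a convexity-type inequality.

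First, I would use \eqref{f} to write
\begin{equation*}
\frac{h(\bv)}{\bv^\gamma}=-\bv+a(\gamma-1)\bv^{-\gamma}+(a+1).
\end{equation*}
Since $\bv$ ranges over $[v_+,1]$ along the profile (with $v_-=1$), and since
\begin{equation*}
\frac{d}{d\bv}\!\left(\frac{h(\bv)}{\bv^\gamma}\right)=-1-a\gamma(\gamma-1)\bv^{-\gamma-1}\le 0
\end{equation*}
for $\gamma\ge 1$ and $a\ge 0$, the expression is monotone nonincreasing, so its supremum over the profile range is attained at $\bv=v_+$.

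Next, I would evaluate at $\bv=v_+$ and substitute $a=v_+^\gamma(1-v_+)/(1-v_+^\gamma)$ from \eqref{RH}:
\begin{equation*}
\frac{h(v_+)}{v_+^\gamma}=(1-v_+)+a\bigl(1+(\gamma-1)v_+^{-\gamma}\bigr)
=(1-v_+)\left[1+\frac{v_+^\gamma+(\gamma-1)}{1-v_+^\gamma}\right]
=\gamma\,\frac{1-v_+}{1-v_+^\gamma},
\end{equation*}
after combining the fractions. One should also check the expression is nonnegative along the profile, which follows from monotonicity since at $\bv=1$ it equals $a\gamma\ge 0$; this justifies removing the absolute value.

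Finally, the inequality $\gamma(1-v_+)/(1-v_+^\gamma)\le\gamma$ is equivalent to $1-v_+\le 1-v_+^\gamma$, i.e.\ $v_+^\gamma\le v_+$, which holds for $0\le v_+\le 1$ and $\gamma\ge 1$. There is no real obstacle: the only mildly delicate piece is the algebraic simplification at $\bv=v_+$, where recognizing $1-v_+^\gamma+v_+^\gamma+\gamma-1=\gamma$ in the numerator produces the cancellation that yields the clean closed form.
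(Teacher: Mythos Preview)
Your proposal is correct and follows essentially the same approach as the paper: define $g(\bv)=h(\bv)/\bv^\gamma$, compute $g'(\bv)=-1-a\gamma(\gamma-1)\bv^{-\gamma-1}<0$ to conclude the maximum on $[v_+,1]$ occurs at $\bv=v_+$, then substitute the Rankine--Hugoniot value of $a$ and simplify. Your additional explicit check of nonnegativity (via $g(1)=a\gamma\ge 0$) and the spelled-out endpoint bound $v_+^\gamma\le v_+$ are details the paper leaves implicit, but the argument is the same.
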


\begin{proof}
Defining
\begin{equation}\label{H}
g(\bV):=h(\bV)\bV^{-\gamma} = -\bV + a(\gamma-1)\bV^{-\gamma} +
(a+1),
\end{equation}
we have $g'(\bV)= -1 -a\gamma(\gamma-1)\bV^{-\gamma-1}<0$ for
$0<v_+\le \bV\le v_-= 1$, hence the maximum of $g$ on $\bV\in
[v_+,v_-]$ is achieved at $\bV=v_+$. Substituting \eqref{RH} into
\eqref{H} and simplifying yields \eqref{id2}.
\end{proof}

\begin{proof}[Proof of Proposition \ref{hf}]
Using Young's inequality twice on right-hand side of \eqref{id1} together with \eqref{id2}, we get
\begin{align*}
(\R&(\lambda) + |\I (\lambda)|) \ip \bV |u|^2  + \ip |u'|^2 \\
&\leq \sqrt{2} \ip \frac{h(\bV)}{\bV^\gamma} |v'| |u| +  \sqrt2\ip \bV |u'||u|\\
&\leq \theta \ip \frac{h(\bV)}{\bV^{\gamma+1}} |v'|^2 + \frac{(\sqrt{2})^2}{4\theta} \ip \frac{h(\bV)}{\bV^\gamma} \bV |u|^2 + \epsilon \ip \bV |u'|^2 + \frac{1}{4 \epsilon} \ip \bV |u|^2\\
&< \theta \ip \frac{h(\bV)}{\bV^{\gamma+1}} |v'|^2  + \epsilon \ip
|u'|^2 + \left[\frac{\gamma}{2\theta} + \frac{1}{2 \epsilon}\right]
\ip \bV |u|^2.
\end{align*}
Assuming that $0<\epsilon<1$ and $\theta = (1-\epsilon)/2$, this simplifies to
\begin{align*}
(\R(\lambda) + |\I (\lambda)|) & \ip \bV |u|^2 + (1-\epsilon) \ip |u'|^2 \\
&<\frac{1-\epsilon}{2} \ip \frac{h(\bV)}{\bV^{\gamma+1}} |v'|^2 +
\left[\frac{\gamma}{2\theta} + \frac{1}{2 \epsilon}\right] \ip \bV
|u|^2.
\end{align*}
Applying \eqref{id3} yields
\[
(\R(\lambda) + |\I (\lambda)|) \ip \bV |u|^2  <
\left[\frac{\gamma}{1-\epsilon} + \frac{1}{2 \epsilon}\right]  \ip
\bV |u|^2,
\]
or equivalently,
\[
(\R(\lambda) + |\I (\lambda)|) <  \frac{(2 \gamma-1)\epsilon +
1}{2\epsilon(1-\epsilon)}.
\]
Setting $\epsilon = 1/(2\sqrt{\gamma}+1)$ gives \eqref{hfbounds1}.
\end{proof}

\section{Proof of preliminary estimate: outflow case}\label{outbasicproof}

\newcommand{\ipo}{\int_{\mathbb{R}^-}}
\newcommand{\ipi}[1]{\int_0^\infty{#1}dx}

Similarly as in the inflow case, we can convert the eigenvalue
equations into the integrated equations as in the shock case; see
\cite{PZ}. Artificially defining $(\tilde u, \tilde v, \tilde
v')^T:= \tilde W$, we obtain a system
\begin{subequations}\label{outep}
\begin{align}
&\lambda \tilde v + \tilde v' - \tilde u' =0, \label{outep:1}\\
&\lambda \tilde u + \tilde u' -  \frac{h(\bV)}{\bV^{\gamma+1}}
\tilde v' = \frac{\tilde u''}{\bV}.\label{outep:2}
\end{align}
\end{subequations}
identical to that in the integrated shock case \cite{BHRZ}, but with
boundary conditions
\begin {equation}\label{outnewbc}
\tilde v'(0) = \frac{\lambda}{\alpha-1}\tilde v(0),\quad \tilde
u'(0) = \alpha\tilde v'(0)
\end{equation}
imposed at $x=0$.
We shall write $w_0$ for $w(0)$, for any function $w$. This new
eigenvalue problem differs spectrally from \eqref{eigen1} only at
$\lambda=0$, hence spectral stability of \eqref{eigen1} is implied
by spectral stability of \eqref{outep}. Hereafter, we drop the
tildes, and refer simply to $u$, $v$.


\begin{lemma}
The following identity holds for $\R \lambda \geq 0$:
\begin{align}
(\R(\lambda) + |\I (\lambda)|) & \ipo  \bV |u|^2 - \frac{1}{2}\ipo  \bV_x |u|^2 + \ipo  |u'|^2+\frac12\bv_0|u_0|^2\notag\\
 &\leq \sqrt{2} \ipo  \frac{h(\bV)}{\bV^\gamma} |v'| |u| +  \ipo  \bV |u'||u| + \sqrt2|\alpha||v'_0||u_0|\label{outid1}.
\end{align}
\end{lemma}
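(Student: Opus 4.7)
The plan is to mirror the inflow derivation of \eqref{id1}, multiplying the momentum equation \eqref{outep:2} by $\bV\bar u$ and integrating by parts over $\mathbb{R}^-$, with the essential new ingredient being careful bookkeeping of the two boundary contributions produced at $x=0$.

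First, I would multiply \eqref{outep:2} by $\bV\bar u$ and integrate over $(-\infty,0)$; integration by parts on the right-hand side, using decay of $u$, $u'$ at $-\infty$, gives
$$\lambda \ipo \bV |u|^2 + \ipo \bV u' \bar u + \ipo |u'|^2 = J + u'_0 \bar u_0,$$
where $J:=\ipo \frac{h(\bV)}{\bV^\gamma} v' \bar u$. The boundary contribution $u'_0\bar u_0$, absent in the inflow case (where $u_0=0$), must be carried through the entire computation here.

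Next, to produce the desired $+\frac{1}{2}\bv_0|u_0|^2$ and $-\frac{1}{2}\ipo \bV_x|u|^2$ on the left-hand side of \eqref{outid1}, I would extract the real part of $\ipo \bV u'\bar u$ via the identity $2\Re(\bV u'\bar u) = \bV(|u|^2)'$ and integrate by parts once more:
$$2\Re \ipo \bV u' \bar u = \bv_0|u_0|^2 - \ipo \bV_x |u|^2.$$
To finish, I would take real and imaginary parts of the main identity, sum them, and apply the elementary inequality $\Re z + |\Im z|\le \sqrt{2}|z|$ to $J$ and to the boundary term $u'_0\bar u_0$, while bounding the only surviving piece of $\ipo \bV u'\bar u$, its imaginary part, by $|\Im \ipo \bV u'\bar u|\le \ipo \bV |u'||u|$. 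This is precisely why the coefficient in front of $\ipo \bV|u'||u|$ in \eqref{outid1} is $1$ rather than $\sqrt{2}$: the real part of $\ipo \bV u'\bar u$ has already been absorbed into the left-hand side as boundary and derivative terms. Invoking the outflow boundary condition $u'_0=\alpha v'_0$ from \eqref{outnewbc} then converts $\sqrt{2}|u'_0\bar u_0|$ into $\sqrt{2}|\alpha||v'_0||u_0|$, completing \eqref{outid1}.

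No step is genuinely difficult; the computation is entirely parallel to the inflow case. The only subtlety is the timing of the two integrations by parts: one must integrate by parts on $\ipo u''\bar u$ and separately on $2\Re\ipo \bV u'\bar u$ so that the resulting boundary quantity $\frac{1}{2}\bv_0|u_0|^2$ lands on the favorable (left-hand) side with positive sign, rather than being crudely estimated on the right, which would spoil the eventual absorption against the $h/\bV^{\gamma+1}$ term needed when this estimate is combined with the analog of Lemma \ref{kawashima} to derive high-frequency bounds.
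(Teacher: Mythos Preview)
Your proposal is correct and follows exactly the paper's approach: multiply \eqref{outep:2} by $\bV\bar u$, integrate over $\mathbb{R}^-$, pick up the boundary term $u'_0\bar u_0$, then take real and imaginary parts and add. Your additional explanation of why $\Re\ipo\bV u'\bar u$ integrates by parts to produce both $\frac12\bv_0|u_0|^2$ and $-\frac12\ipo\bV_x|u|^2$ on the left---leaving only $|\Im\ipo\bV u'\bar u|\le \ipo\bV|u'||u|$ on the right, hence the coefficient $1$ rather than $\sqrt2$---merely unpacks what the paper's two-line proof leaves implicit.
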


\begin{proof}
We multiply \eqref{outep:2} by $\bV {\bar u}$ and integrate along
$x$. This yields
\[
\lambda \ipo  \bV |u|^2 + \ipo  \bV u'\bar{u} + \ipo  |u'|^2 = \ipo
\frac{h(\bV)}{\bV^\gamma} v'\bar{u} +u'_0\bar u_0.
\]
We get \eqref{outid1} by taking the real and imaginary parts and
adding them together, and noting that $|\R(z)| + |\I(z)| \leq
\sqrt{2}|z|$.
\end{proof}

\begin{lemma}
\label{outkawashima} The following inequality holds for $\R \lambda
\geq 0$:
%
%
\begin{align}\label{outid3} \frac{1}{2} \ipo
\left[\frac{h(\bV)}{\bV^{\gamma+1}} + \frac{a\gamma}{\bV^{\gamma+1}}
\right] |v'|^2 &+ \R(\lambda)\ipo  \frac{|v'|^2}{\bV}+ \frac
{|v'_0|^2}{4\bv_0} + 2\R\lambda ^2\ipo  |v|^2\notag \\&\le \ipo
|u'|^2 +\bv_0|u_0|^2.\end{align}

\end{lemma}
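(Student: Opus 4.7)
The strategy is to mimic the proof of Lemma \ref{kawashima} for the inflow case, but over the half-line $\mathbb{R}^-$ rather than $\mathbb{R}^+$, and tracking the boundary contributions at $x=0$ that previously vanished thanks to the inflow Dirichlet conditions $\tilde v_0 = \tilde v'_0 = \tilde u'_0 = 0$. All boundary contributions at $x=-\infty$ vanish by exponential decay of eigenfunctions, so every surface term will live at $x=0$ and must be controlled using the outflow conditions \eqref{outnewbc}.

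First I multiply \eqref{outep:2} by $\bar v'$ and integrate over $\mathbb{R}^-$, substitute $u'' = v'' + \lambda v'$ via \eqref{outep:1}, and integrate the $v''\bar v'/\bv$ term by parts. Exactly as in the inflow case the purely integral part rearranges using the profile identity
\[
\frac{h(\bv)}{\bv^{\gamma+1}} + \frac{\bv_x}{2\bv^2} = \frac{1}{2}\left[\frac{h(\bv)}{\bv^{\gamma+1}} + \frac{a\gamma}{\bv^{\gamma+1}}\right],
\]
but now a surface contribution $+\,\tfrac{|v'_0|^2}{2\bv_0}$ survives on the right-hand side of
\[
\R\Big[\lambda\ipo u\bar v' + \ipo u'\bar v'\Big]
= \frac12\ipo\!\Big[\tfrac{h(\bv)}{\bv^{\gamma+1}} + \tfrac{a\gamma}{\bv^{\gamma+1}}\Big]|v'|^2
+ \R\lambda\ipo\tfrac{|v'|^2}{\bv} + \tfrac{|v'_0|^2}{2\bv_0}.
\]

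Next I work on the left-hand side, reproducing the manipulation from Lemma \ref{kawashima} but keeping every boundary contribution. Integration by parts turns $\ipo u'\bar v'$ into $u_0\bar v'_0 - \ipo u\bar v''$; the conjugate of \eqref{outep:1} converts $\bar v''$ to $\bar u'' - \bar\lambda\bar v'$, producing the familiar $(\lambda+\bar\lambda)\ipo u\bar v' - \ipo u\bar u''$ plus the boundary term $u_0\bar v'_0$. Integrating $\ipo u\bar v'$ and $\ipo u\bar u''$ by parts once more, using \eqref{outep:1} and the identity $\R\ipo v'\bar v = \tfrac12|v_0|^2$, I will arrive at
\[
\R\Big[\lambda\ipo u\bar v' + \ipo u'\bar v'\Big]
= \ipo|u'|^2 - 2(\R\lambda)^2\ipo|v|^2 - \R\lambda|v_0|^2
+ 2\R\lambda\,\R(u_0\bar v_0) - \R(u_0\bar u'_0) + \R(u_0\bar v'_0).
\]

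Combining the two displays yields an equality whose only non-integral terms live at $x=0$. At this point I apply the outflow boundary conditions \eqref{outnewbc}, $u'_0 = \alpha v'_0$ and $v'_0 = \tfrac{\lambda}{\alpha-1}v_0$, to re-express each surface contribution as a bilinear form in $u_0$ and $v'_0$. A single application of Young's inequality of the form
\[
|\R(u_0\bar v'_0)|\le \tfrac{|v'_0|^2}{4\bv_0} + \bv_0|u_0|^2
\]
(together with analogous estimates for the remaining boundary terms, which reduce to this one after using $|\alpha|\le 1$ on $\R\lambda\ge 0$ and the relation between $v_0$ and $v'_0$) absorbs the cross-terms into the claimed $\tfrac{|v'_0|^2}{4\bv_0}$ on the left and $\bv_0|u_0|^2$ on the right, turning the equality into the one-sided estimate \eqref{outid3}.

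The main obstacle is purely bookkeeping: all five surface terms $|v_0|^2$, $\R(u_0\bar v_0)$, $\R(u_0\bar u'_0)$, $\R(u_0\bar v'_0)$, $|v'_0|^2$ are coupled through \eqref{outnewbc}, and the Young weights must be calibrated so that after absorption the left-hand coefficient is exactly $\tfrac{1}{4\bv_0}$ and the right-hand coefficient is exactly $\bv_0$ (no worse), uniformly in $\lambda$ with $\R\lambda\ge 0$. The remaining integral identity, being word-for-word identical to the inflow case modulo the sign of $\bv_x$, provides no new difficulty.
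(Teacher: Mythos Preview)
Your approach is essentially the same as the paper's: multiply \eqref{outep:2} by $\bar v'$, integrate over $\mathbb{R}^-$, and track boundary terms at $x=0$. Two small points are worth noting. First, the three surface terms you list, $2\R\lambda\,\R(u_0\bar v_0) - \R(u_0\bar u'_0) + \R(u_0\bar v'_0)$, collapse algebraically via \eqref{outnewbc} into the single quantity $\R(\lambda u_0\bar v_0)$ (use $u'_0=\alpha v'_0$ and $(1-\bar\alpha)\bar v'_0=-\bar\lambda\bar v_0$); the paper arranges its integrations by parts so that this single term appears directly, so the ``calibration of Young weights'' you flag as the main obstacle is in fact automatic. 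Second, the relevant modulus bound is $|\alpha-1|=|\lambda|/|\lambda-\bv'(0)|\le 1$ (not $|\alpha|\le 1$, though that also holds), which gives $|\R(\lambda u_0\bar v_0)|\le|\alpha-1|\,|v'_0|\,|u_0|\le|v'_0|\,|u_0|$, and then a single Young inequality yields exactly $\tfrac{|v'_0|^2}{4\bv_0}+\bv_0|u_0|^2$.
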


\begin{proof}
We multiply \eqref{outep:2} by ${\bar v'}$ and integrate along $x$.
This yields
\[
\lambda \ipo  u\bar{v}' + \ipo  u'\bar{v}' - \ipo
\frac{h(\bV)}{\bV^{\gamma+1}}|v'|^2 = \ipo  \frac{1}{\bV}u''\bar{v}'
= \ipo  \frac{1}{\bV}(\lambda v' + v''){\bar v'}.
\]
Using \eqref{outep:1} on the right-hand side, integrating by parts,
and taking the real part gives
\[
\R \left[ \lambda \ipo  u\bar{v}' + \ipo  u'\bar{v}'\right] = \ipo
\left[\frac{h(\bV)}{\bV^{\gamma+1}} + \frac{\bV_x}{2 \bV^2} \right]
|v'|^2 + \R(\lambda)\ipo  \frac{|v'|^2}{\bV} + \frac
{|v'_0|^2}{2\bv_0}.
\]
The right hand side can be rewritten as
%
%
\begin{align}
\label{outid3_1} \R &\left[ \lambda \ipo  u\bar{v}' + \ipo
u'\bar{v}'\right] \notag\\&= \frac{1}{2} \ipo
\left[\frac{h(\bV)}{\bV^{\gamma+1}} + \frac{a\gamma}{\bV^{\gamma+1}}
\right] |v'|^2 + \R(\lambda)\ipo  \frac{|v'|^2}{\bV}+ \frac
{|v'_0|^2}{2\bv_0}.
\end{align}
Now we manipulate the left-hand side.  Note that
\begin{align*}
\lambda \ipo  u\bar{v}' &+ \ipo  u'\bar{v}' = (\lambda+\bar{\lambda}) \ipo  u\bar{v}' + \ipo  (u'\bar{v}' - \bar \lambda u \bar v')\\
&= -2\R(\lambda) \ipo  u' \bar{v} + 2\R\lambda u_0\bar v_0+\ipo
u'(\bar v' + \bar \lambda \bar v) -\bar \lambda u_0 \bar
v_0\\
&= -2\R(\lambda) \ipo  (\lambda v + v') \bar{v} +\ipo  |u'|^2
+2\R\lambda u_0\bar v_0-\bar \lambda u_0 \bar v_0.
\end{align*}
Hence, by taking the real part  and noting that $$\R(2\R\lambda
u_0\bar v_0-\bar \lambda u_0 \bar v_0) =\R\lambda \R(u_0\bar
v_0)-\I\lambda \I(u_0 \bar v_0) = \R(\lambda u_0\bar v_0)$$  we get
\[
\R \left[ \lambda \ipo  u\bar{v}' + \ipo  u'\bar{v}'\right] = \ipo
|u'|^2 - 2\R\lambda ^2\ipo  |v|^2 - \R\lambda |v_0|^2 + \R(\lambda
u_0\bar v_0).
\]

This combines with \eqref{outid3_1} to give
\begin{align*} \frac{1}{2} \ipo
\left[\frac{h(\bV)}{\bV^{\gamma+1}} + \frac{a\gamma}{\bV^{\gamma+1}}
\right] |v'|^2 &+ \R(\lambda)\ipo  \frac{|v'|^2}{\bV}+ \frac
{|v'_0|^2}{2\bv_0} + 2\R\lambda ^2\ipo  |v|^2\\&  + \R\lambda
|v_0|^2 = \ipo  |u'|^2 +\R(\lambda u_0\bar v_0).\end{align*}

We get \eqref{outid3} by observing that \eqref{outnewbc} and Young's
inequality yield
$$|\R(\lambda u_0\bar v_0)|\le |\alpha-1||v'_0v_0| \le |v'_0v_0|\le \frac{|v'_0|^2}{4\bv_0} +
\bv_0|u_0|^2.$$ Here we used $|\alpha-1| =
\frac{|\lambda|}{|\lambda-\bv'_0|}\le 1$. Note that $\R\lambda \ge0$
and $\bv'_0\le0$.\end{proof}

\begin{proof}[Proof of Proposition \ref{hf}]
Using Young's inequality twice on right-hand side of \eqref{outid1}
together with \eqref{id2}, and denoting the boundary term on the
right by $I_b$, we get
\begin{align*}
(\R&(\lambda) + |\I (\lambda)|) \ipo  \bV |u|^2 - \frac{1}{2}\ipo  \bV_x |u|^2 + \ipo  |u'|^2+\frac12\bv_0|u_0|^2 \\
&\leq \sqrt{2} \ipo  \frac{h(\bV)}{\bV^\gamma} |v'| |u| +  \ipo  \bV |u'||u|+I_b\\
&\leq \theta \ipo  \frac{h(\bV)}{\bV^{\gamma+1}} |v'|^2 + \frac{1}{2\theta} \ipo  \frac{h(\bV)}{\bV^\gamma} \bV |u|^2 + \epsilon \ipo  \bV |u'|^2 + \frac{1}{4 \epsilon} \ipo  \bV |u|^2+I_b\\
&< \theta \ipo  \frac{h(\bV)}{\bV^{\gamma+1}} |v'|^2  + \epsilon
\ipo |u'|^2 + \left[\frac{\gamma}{2\theta} + \frac{1}{4
\epsilon}\right] \ipo  \bV |u|^2+I_b.
\end{align*}
Here we treat the boundary term by
\begin{align*}I_b&\le\sqrt2|\alpha||v'_0||u_0|\le \frac \theta 2 \frac{|v'_0|^2}{\bv_0}+\frac 1\theta |\alpha|^2 \bv_0|u_0|^2.\end{align*}

Therefore using \eqref{outid3}, we simply obtain from the above
estimates
\begin{align*}
(\R(\lambda) &+ |\I (\lambda)|)  \ipo  \bV |u|^2 + (1-\epsilon )\ipo  |u'|^2 +\frac12\bv_0|u_0|^2\\
&<\theta \ipo  \frac{h(\bV)}{\bV^{\gamma+1}} |v'|^2 +\frac \theta 2
\frac{|v'_0|^2}{\bv_0} + \left[\frac{\gamma}{2\theta} + \frac{1}{4
\epsilon}\right] \ipo  \bV |u|^2+\frac 1\theta |\alpha|^2
\bv_0|u_0|^2\\&< 2\theta \ipo  |u'|^2 +\left[\frac{\gamma}{2\theta}
+ \frac{1}{4 \epsilon}\right] \ipo  \bV |u|^2+J_b
\end{align*} where $J_b:=(\frac
1\theta |\alpha|^2 +2\theta )\bv_0|u_0|^2.$ Assuming that $\epsilon
+ 2\theta \le 1$, this simplifies to
\begin{align*}
(\R(\lambda) + |\I (\lambda)|) & \ipo  \bV |u|^2
+\frac12\bv_0|u_0|^2<\left[\frac{\gamma}{2\theta} + \frac{1}{4
\epsilon}\right] \ipo  \bV |u|^2+J_b.
\end{align*}

Note that $|\alpha|\le \frac{-\bv'_0}{|\lambda|}\le \frac
1{4|\lambda|}$. Therefore for $|\lambda|\ge \frac1{4\theta}$, we get
$|\alpha|\le \theta$ and $J_b\le 3\theta\bv_0|u_0|^2.$ For sake of
simplicity, choose $\theta =1/6$ and $\epsilon = 2/3$. This shows
that $J_b$ can be absorbed into the left by the term $\frac
12\bv_0|u_0|^2$ and thus we get
\begin{align*}
(\R(\lambda) + |\I (\lambda)|) & \ipo  \bV |u|^2 <\left[
\frac{\gamma}{2\theta}+ \frac{1}{4 \epsilon}\right] \ipo  \bV |u|^2
= \left[ 3\gamma+ \frac{3}{8}\right] \ipo  \bV |u|^2,
\end{align*} provided that $|\lambda|\ge 1/(4\theta) =3/2$.

This shows
\[
(\R(\lambda) + |\I (\lambda)|) <  \max\{\frac{3\sqrt2}{2},3\gamma+
\frac{3}{8}\}.
\]
\end{proof}

\section{Nonvanishing of $D^0_{\rm in}$}\label{stronglimit}

Working in $(\tilde v, \tilde u)$ variables as in \eqref{ep},
the limiting eigenvalue system and boundary
conditions take the form
\begin{subequations}\label{limitep}
\begin{align}
&\lambda \tilde v + \tilde v' - \tilde u' =0, \label{limitep:1}\\
&\lambda \tilde u + \tilde u' -  \frac{1-\bV}{\bV} \tilde v' =
\frac{\tilde u''}{\bV}.\label{limitep:2}
\end{align}
\end{subequations}
corresponding to a pressureless gas, $\gamma=0$,
with
\begin{equation}\label{PLBC}
(\tilde u,\tilde u',\tilde v,\tilde v')(0)=(d,0,0,0),\:\:
(\tilde u,\tilde u',\tilde v,\tilde v')(+\infty)=(c,0,0,0).
\end{equation}
Hereafter, we drop the tildes.

\begin{proof}[Proof of Proposition \ref{redenergy}]
Multiplying \eqref{limitep:2} by $\bV \bar{u}/(1-\bV)$ and integrating on
$[0,b]\subset\mathbb{R}^+$, we obtain
\[
\lambda \int^b_0 \frac{\bV}{1-\bV}|u|^2 dx +  \int^b_0 \frac{\bV}{1-\bV}
u'\bar{u} dx -  \int^b_0 v'\bar{u} dx =  \int^b_0\frac{u'' \bar{u}}{1-\bV}
dx.
\]
Integrating the third and fourth terms by parts yields
\begin{align*}
\lambda \int^b_0 \frac{\bV}{1-\bV}|u|^2 dx &+  \int^b_0 \left[
\frac{\bV}{1-\bV} + \left( \frac{1}{1-\bV} \right)'\right] u'\bar{u} dx \\
&\quad+
\int^b_0 \frac{|u'|^2}{1-\bV} dx
+  \int^b_0 v (\overline{\lambda v + v'}) dx  \\
&= \left[ v \bar{u} + \frac{u'\bar{u}}{1-\bV} \right] \Big|^b_0.\\
\end{align*}
Taking the real part, we have

\begin{align}
&\R(\lambda) \int^b_0 \left( \frac{\bV}{1-\bV}|u|^2 + |v|^2\right) dx +
\int^b_0 g(\bV) |u|^2 dx + \int^b_0 \frac{|u'|^2}{1-\bV} dx\notag\\
&\quad  = \R \left[ v \bar{u} +
\frac{u'\bar{u}}{1-\bV} - \frac{1}{2}\left[\frac{\bV}{1-\bV} + \left(
\frac{1}{1-\bV} \right)'\right] |u|^2 - \frac{|v|^2 }{2} \right] \Big|^b_0,\label{MNen}
\end{align}
where
\[
g(\bV) =  -\frac{1}{2}\left[ \left(\frac{\bV}{1-\bV}\right)' + \left(
\frac{1}{1-\bV} \right)''\right].
\]
Note that
\[
\frac{d}{dx}\left(\frac{1}{1-\bV}\right) = - \frac{(1-\bV)'}{(1-\bV)^2} =
\frac{\bV_x}{(1-\bV)^2} = \frac{\bV(\bV-1)}{(1-\bV)^2} =
-\frac{\bV}{1-\bV}.
\]
Thus, $g(\bV)\equiv 0$ and the third term on the right-hand side vanishes,
leaving
\begin{align*}
\R(\lambda) \int^b_0 \left( \frac{\bV}{1-\bV}|u|^2 + |v|^2\right) dx &+
\int^b_0 \frac{|u'|^2}{1-\bV} dx\\
&\quad  = \left[ \R(v \bar{u}) + \frac{\R(u'\bar{u})}{1-\bV} - \frac{|v|^2
}{2} \right] \Big|^b_0\\
&\quad  = \left[ \R(v \bar{u}) + \frac{\R(u'\bar{u})}{1-\bV} - \frac{|v|^2
}{2} \right](b).\\
\end{align*}

We show finally that the right-hand side goes to zero in the limit as
$b\rightarrow\infty$.
By Proposition \ref{conjugation}, the behavior of $u$, $v$ near
$\pm \infty$ is governed by the limiting constant--coefficient
systems $W'=A^0_\pm(\lambda)W$, where $W=(u,v,v')^T$
and $A^0_\pm =A^0(\pm \infty, \lambda)$.
In particular, solutions $W$ asymptotic to $(1,0,0)$ at
$x=+\infty$ decay exponentially in $(u',v,v')$ and are bounded
in coordinate $u$ as $x\to +\infty$.
Observing that $1-\hat v\to 1$ as $x\to +\infty$, we thus see immediately
that the boundary contribution at $b$ vanishes as $b\to +\infty$.

Thus, in the limit as $b\to +\infty$,
\begin{equation}
\R(\lambda) \int^{+\infty}_{0} \left( \frac{\bV}{1-\bV}|u|^2 + |v|^2\right) dx
+ \int^{+\infty}_{0} \frac{|u'|^2}{1-\bV} dx=0.
\end{equation}
But, for $\R \lambda\ge 0$, this implies $u'\equiv 0$,
or $u\equiv \hbox{\rm constant}$, which, by $u(0)=1$,
implies $u\equiv 1$.
This reduces \eqref{limitep:1} to $v'=\lambda v$, yielding the
explicit solution $v=Ce^{\lambda x}$. By $v(0)=0$,
therefore, $v\equiv 0$ for $\R \lambda\ge 0$.
Substituting into \eqref{limitep:2}, we obtain $\lambda =0$.
It follows that there are no nontrivial solutions of \eqref{limitep},
\eqref{PLBC} for $\R \lambda\ge 0$ except at $\lambda=0$.
%
\end{proof}

\begin{remark}\label{symmetrizers}
The above energy estimate is essentially identical to that used
in \cite{HLZ} to treat the limiting shock case.
\end{remark}

\section{Nonvanishing of $D^0_{\rm out}$}\label{outnonv}

Working in $(\tilde v, \tilde u)$ variables as in \eqref{ep},
the limiting eigenvalue system and boundary
conditions take the form
\begin{subequations}\label{outlimitep}
\begin{align}
&\lambda \tilde v + \tilde v' - \tilde u' =0, \label{outlimitep:1}\\
&\lambda \tilde u + \tilde u' -  \frac{1-\bV}{\bV} \tilde v' =
\frac{\tilde u''}{\bV}.\label{outlimitep:2}
\end{align}
\end{subequations}
corresponding to a pressureless gas, $\gamma=0$,
with
\begin{equation}\label{outPLBC}
(\tilde u,\tilde u',\tilde v,\tilde v')(-\infty)=(0,0,0,0),
\end{equation}
\begin {equation}
\tilde v'(0) = \frac{\lambda}{\alpha-1}\tilde v(0),\quad \tilde
u'(0) = \alpha\tilde v'(0).
\end{equation}
In particular, 
\begin{equation}\label{useful}
\tilde u'(0)=\frac{\lambda \alpha}{\alpha -1}\tilde v(0)=
\hat v'(0)\tilde v(0)
=
(v_0-1)\hat v_0 \tilde v(0).
\end{equation}
Hereafter, we drop the tildes.

\begin{proof}[Proof of Proposition \ref{redenergy}]
Multiplying \eqref{outlimitep:2} by $\bV \bar{u}/(1-\bV)$ and integrating on
$[a,0]\subset\mathbb{R}^-$, we obtain
\[
\lambda \int^0_a \frac{\bV}{1-\bV}|u|^2 dx +  \int^b_a \frac{\bV}{1-\bV}
u'\bar{u} dx -  \int^0_a v'\bar{u} dx =  \int^0_a\frac{u'' \bar{u}}{1-\bV}
dx.
\]
Integrating the third and fourth terms by parts yields
\begin{align*}
\lambda \int^0_a \frac{\bV}{1-\bV}|u|^2 dx &+  \int^0_a \left[
\frac{\bV}{1-\bV} + \left( \frac{1}{1-\bV} \right)'\right] u'\bar{u} dx \\
&\quad+
\int^0_a \frac{|u'|^2}{1-\bV} dx
+  \int^0_a v (\overline{\lambda v + v'}) dx  \\
&= \left[ v \bar{u} + \frac{u'\bar{u}}{1-\bV} \right] \Big|^0_a.\\
\end{align*}
Taking the real part, we have

\begin{align}
&\R(\lambda) \int^0_a \left( \frac{\bV}{1-\bV}|u|^2 + |v|^2\right) dx +
\int^0_a g(\bV) |u|^2 dx + \int^0_a \frac{|u'|^2}{1-\bV} dx\notag\\
&\quad  = \R \left[ v \bar{u} +
\frac{u'\bar{u}}{1-\bV} - \frac{1}{2}\left[\frac{\bV}{1-\bV} + \left(
\frac{1}{1-\bV} \right)'\right] |u|^2 - \frac{|v|^2 }{2} \right] \Big|^0_a,\label{MNen2}
\end{align}
where
\[
g(\bV) =  -\frac{1}{2}\left[ \left(\frac{\bV}{1-\bV}\right)' + \left(
\frac{1}{1-\bV} \right)''\right]\equiv 0
\]
and the third term on the right-hand side vanishes,
as shown in Section \ref{stronglimit}, leaving
\begin{align*}
\R(\lambda) \int^0_a \left( \frac{\bV}{1-\bV}|u|^2 + |v|^2\right) dx &+
\int^0_a \frac{|u'|^2}{1-\bV} dx\\
&\quad  = \left[ \R(v \bar{u}) + \frac{\R(u'\bar{u})}{1-\bV} - \frac{|v|^2
}{2} \right] \Big|^0_a.\\
\end{align*}

A boundary analysis similar to that of Section \ref{stronglimit}
shows that the contribution at $a$ on the righthand side
vanishes as $a\to -\infty$; see \cite{HLZ} for details.
Thus, in the limit as $a\to -\infty$ we obtain
\begin{align*}
\R(\lambda) \int^0_{-\infty} \left( \frac{\bV}{1-\bV}|u|^2 + |v|^2\right) dx &+
\int^0_{-\infty} \frac{|u'|^2}{1-\bV} dx\\
&\quad  = \left[ \R(v \bar{u}) + \frac{\R(u'\bar{u})}{1-\bV} - \frac{|v|^2
}{2} \right] (0)\\
&\quad  = \left[(1-v_0)\R(v \bar{u}) - \frac{|v|^2
}{2} \right] (0),\\
&\quad  \le
\left[(1- v_0) |v||u| - \frac{|v|^2 }{2}\right](0)\\
&\quad  \le (1- v_0 )^2\frac{|u(0)|^2}{2},
\end{align*}
where the second equality follows by \eqref{useful} and the
final line by Young's inequality.

Next, observe the Sobolev-type bound
$$
\begin{aligned}
|u(0)|^2 &\le \Big(
\int_{-\infty}^0 |u'(x)| dx
\Big)^2
\le
\int_{-\infty}^0 \frac{|u'|^2}{1-\hat v}(x) dx
\int_{-\infty}^0 (1-\hat v)(x) dx,
\end{aligned}
$$
together with
$$
\begin{aligned}
\int_{-\infty}^0 (1-\hat v)(x) dx&=
\int_{-\infty}^0 -\frac{\hat v'}{\hat v}(x)dx
=\int_{-\infty}^0 (\log \hat v^{-1})'(x) dx
= \log v_0^{-1},
\end{aligned}
$$
hence
$\int_{-\infty}^0 (1-\hat v)(x) dx< \frac{2}{(1-v_0)^2}$
for $v_0>v_*$, where $v_*<e^{-2}$ is the unique solution of
\begin{equation}\label{fnl}
 v_*=e^{-2/(1-v_*)^2}. 
\end{equation}

Thus, for $v_0>v_*$,
\begin{equation}
\R(\lambda) \int^{0}_{-\infty} \left( \frac{\bV}{1-\bV}|u|^2 + |v|^2\right) dx
+ \epsilon \int^{0}_{-\infty} \frac{|u'|^2}{1-\bV} dx\le 0,
\end{equation}
for $\epsilon:= \frac{(1-v_0)^2}{2}- 
\frac{1}{\int_{-\infty}^0 (1-\hat v)(x) dx}>0$.
For $\R \lambda\ge 0$, this implies $u'\equiv 0$,
or $u\equiv \hbox{\rm constant}$, which, by $u(-\infty)=0$,
implies $u\equiv 0$.
This reduces \eqref{outlimitep:1} to $v'=\lambda v$, yielding the
explicit solution $v=Ce^{\lambda x}$. By $v(0)=0$,
therefore, $v\equiv 0$ for $\R \lambda\ge 0$.
It follows that there are no nontrivial solutions of \eqref{outlimitep},
\eqref{outPLBC} for $\R \lambda\ge 0$ except at $\lambda=0$.

By iteration, starting with $v_*\approx 0$, we obtain
first $v_*<e^{-2}\approx 0.14$ then $v_*> e^{2/(1-.14)^2}\approx .067$,
then $v_*< e^{2/(1-.067)^2}\approx .10$, then 
$v_*> e^{2/(1-.10)^2}\approx .085$, then 
$v_*< e^{2/(1-.085)}\approx .091$ and
$v_*> e^{2/(1-.091)}\approx .0889$, terminating with $v_*\approx .0899$.
\end{proof}

\begin{remark}\label{smallcase}
Our Evans function results show that
the case $v_0$ small not treated corresponds to the shock
limit for which stability is already known by \cite{HLZ}.
This suggests that a more sophisticated energy estimate
combining the above with a boundary-layer analysis from 
$x=0$ back to $x=L+\delta$ might yield nonvanishing for all $1>v_0>0$.
\end{remark}

\section{The characteristic limit: outflow case}\label{char}

We now show stability of compressive outflow boundary layers in the
characteristic limit $v_+\to 1$, by essentially the same energy
estimate used in \cite{MN} to show stability of small-amplitude
shock waves.

As in the above section on the outflow case, we obtain a system
\begin{subequations}\label{charep}
\begin{align}
&\lambda \tilde v + \tilde v' - \tilde u' =0, \label{charep:1}\\
&\lambda \tilde u + \tilde u' -  \frac{h(\bV)}{\bV^{\gamma+1}}
\tilde v' = \frac{\tilde u''}{\bV}.\label{charep:2}
\end{align}
\end{subequations}
identical to that in the integrated shock case \cite{BHRZ}, but with
boundary conditions
\begin {equation}
\tilde v'(0) = \frac{\lambda}{\alpha-1}\tilde v(0),\quad \tilde
u'(0) = \alpha\tilde v'(0).
\end{equation}
In particular,
\begin{equation}
\tilde u'(0)=\frac{\lambda \alpha}{\alpha -1}\tilde v(0)= \hat
v'(0)\tilde v(0).
\end{equation}
This new eigenvalue problem differs spectrally from \eqref{eigen1}
only at $\lambda=0$, hence spectral stability of \eqref{eigen1} is
implied by spectral stability of \eqref{charep}. Hereafter, we drop
the tildes, and refer simply to $u$, $v$.

\begin{proof} [Proof of Proposition \ref{charsmallamp}]

\newcommand{\ipco}[1]{\int_{-\infty}^0 {#1} dx}

We note that $h(\bV) > 0$.  By multiplying \eqref{charep:2} by both
the conjugate $\bar{u}$ and $\bV^{\gamma+1}/h(\bV)$ and integrating
along $x$ from $-\infty$ to $0$, we have
\[
\ipco{ \frac{\lambda u \bar{u}\bV^{\gamma+1}}{h(\bV)} }+ \ipco{
\frac{u' \bar{u}\bV^{\gamma+1}}{h(\bV)} }-  \ipco{ v' \bar{u}} =
\ipco{ \frac{u''\bar{u}\bV^\gamma}{h(\bV)}}.
\]
Integrating the last three terms by parts and appropriately using
\eqref{charep:1} to substitute for $u'$ in the third term gives us
\begin{align*}
\ipco{ \frac{\lambda |u|^2 \bV^{\gamma+1}}{h(\bV)} }&+ \ipco{
\frac{u' \bar{u}\bV^{\gamma+1}}{h(\bV)}} + \ipco{ v
(\overline{\lambda v + v'})} + \ipco{
\frac{\bV^\gamma|u'|^2}{h(\bV)}}
\\&= -\ipco{ \left(\frac{\bV^\gamma}{h(\bV)}\right)' u'\bar{u}} +
\left[v\bar u+\frac{v^\gamma u'\bar u}{h(\bv)}\right]\Big|_{x=0}.
\end{align*}
We take the real part and appropriately integrate by parts to get
\begin{align}\label{outid2}
\R(\lambda)\ipco{ \left[ \frac{\bV^{\gamma+1}}{h(\bV)}|u|^2+|v|^2
\right]} +  \ipco{ g(\bV) |u|^2} + \ipco{
\frac{\bV^\gamma}{h(\bV)}|u'|^2}= G(0),
\end{align}
where
\[
g(\bV) = - \frac{1}{2}
\left[\left(\frac{\bV^{\gamma+1}}{h(\bV)}\right)' +
\left(\frac{\bV^\gamma}{h(\bV)}\right)'' \right]
\]
and
\[
G(0) = - \frac{1}{2} \left[\frac{\bV^{\gamma+1}}{h(\bV)} +
\left(\frac{\bV^\gamma}{h(\bV)}\right)' \right]|u|^2 + \R\left[v\bar
u+\frac{v^\gamma u'\bar u}{h(\bv)}\right] - \frac{|v|^2}{2}
\]
evaluated at $x=0$. Here, the boundary term appearing on the
righthand side is the only difference from the corresponding
estimate appearing in the treatment of the shock case in \cite{MN,
BHRZ}. We shall show that as $\bv_+\to1$, the boundary term $G(0)$
is nonpositive. Observe that boundary conditions yield
$$\left[v\bar u+\frac{v^\gamma u'\bar u}{h(\bv)}\right]\Big|_{x=0} =
\R(v(0)\bar u(0))\left[1+\frac{\bv^\gamma
\bv'}{h(\bv)}\right]\Big|_{x=0}.$$

We first note, as established in \cite{MN,BHRZ}, that $g(\bV) \geq
0$ on $[v_+,1]$, under certain conditions including the case
$\bv_+\to1$. Straightforward computation gives identities:
\begin{align}
\gamma h(\bV) - \bV h'(\bV) &= a\gamma(\gamma-1) + \bV^{\gamma+1}\quad\mbox{and}\label{charell1}\\
\bV^{\gamma-1}\bV_x &= a\gamma - h(\bV)\label{charI2}.
\end{align}
Using \eqref{charell1} and \eqref{charI2}, we abbreviate a few
intermediate steps below:
\begin{align}
g(\bV) &= -\frac{\bV_x}{2}\left[ \frac{(\gamma+1)\bV^\gamma h(\bV) - \bV^{\gamma+1}h'(\bV)}{h(\bV)^2} + \frac{d}{d\bV}\left[ \frac{\gamma \bV^{\gamma-1}h(\bV)-\bV^\gamma h'(\bV)}{h(\bV)^2} \bV_x \right]\right]\notag\\
&= -\frac{\bV_x}{2}\left[ \frac{\bV^\gamma\left((\gamma+1)h(\bV) - \bV h'(\bV)\right)}{h(\bV)^2} + \frac{d}{d\bV}\left[ \frac{\gamma h(\bV)-\bV h'(\bV)}{h(\bV)^2} (a\gamma-h(\bV)) \right]\right]\notag\\
&=-\frac{a\bV_x\bV^{\gamma-1}}{2 h(\bV)^3} \times\notag\\
& \qquad\left[ \gamma^2(\gamma+1)\bV^{\gamma+2} - 2 (a+1)\gamma(\gamma^2-1)\bV^{\gamma+1}+(a+1)^2\gamma^2(\gamma-1)\bV^\gamma\right.\notag\\
&\qquad\qquad +\left. a\gamma(\gamma+2)(\gamma^2-1)\bV-a (a+1) \gamma^2 (\gamma^2-1) \right]\notag\\
&=-\frac{a\bV_x\bV^{\gamma-1}}{2 h(\bV)^3}[(\gamma+1)\bV^{\gamma+2}+\bV^\gamma(\gamma-1)\left((\gamma+1)\bV-(a+1)\gamma\right)^2 \label{charpreMN}\\
&\qquad + a\gamma(\gamma^2-1)(\gamma+2)\bV-a (a+1)\gamma^2(\gamma^2-1)]\notag\\
&\geq -\frac{a\bV_x\bV^{\gamma-1}}{2 h(\bV)^3}[(\gamma+1)\bV^{\gamma+2}+ a\gamma(\gamma^2-1)(\gamma+2)\bV-a (a+1)\gamma^2(\gamma^2-1)]\notag\\
&\geq-\frac{\gamma^2 a^3 \bV_x (\gamma+1)}{2 h(\bV)^3
v_+}\left[\left(\frac{v_+^{\gamma+1}}{a\gamma}\right)^2+2(\gamma-1)\left(\frac{v_+^{\gamma+1}}{a\gamma}\right)-(\gamma-1)\right].\label{charMN}
\end{align}
This verifies $g(\bv) \ge 0$ as $\bv_+\to1$.

Second, examine
\[ G(0) = - \frac{1}{2} \left[\frac{\bV^{\gamma+1}}{h(\bV)} +
\left(\frac{\bV^\gamma}{h(\bV)}\right)'
\right]|u(0)|^2+\left[1+\frac{\bv^\gamma
\bv'}{h(\bv)}\right]\R(v(0)\bar u(0))- \frac{|v(0)|^2}{2}.
\]

Applying Young's inequality to the middle term, we easily get $$G(0)
\le  - \frac{1}{2} \left[\frac{\bV^{\gamma+1}}{h(\bV)} +
\left(\frac{\bV^\gamma}{h(\bV)}\right)' -\left(1+\frac{\bv^\gamma
\bv'}{h(\bv)}\right)^2\right]|u(0)|^2 =: -\frac 12 I|u(0)|^2.$$

Now observe that $I$ can be written as
\begin{align*} I=\frac{\bV^{\gamma+1}}{h(\bV)} &-1+
\left[\frac{\gamma\bv^{\gamma-1}}{h(\bv)}-\frac{2\bv^\gamma}{h(\bv)}
-\frac{\bv^{2\gamma}\bv'}{h^2(\bv)}\right]\bv' - \frac{\bv^\gamma
h'(\bv)}{h^2(\bv)}.\end{align*}

Using \eqref{charell1} and \eqref{charI2}, we get
$$\frac{\bV^{\gamma+1}}{h(\bV)} -1 = -\frac{(\gamma-1)\bv^{\gamma-1}\bv' + \bv
h'(\bv)}{h(\bv)}$$ and thus
\begin{align*} I= -\frac{(\gamma-1)\bv^{\gamma-1}\bv' + \bv
h'(\bv)}{h(\bv)}+
\left[\frac{\gamma\bv^{\gamma-1}}{h(\bv)}-2\frac{\bv^\gamma}{h(\bv)}
-\frac{\bv^{2\gamma}\bv'}{h^2(\bv)}\right]\bv' - \frac{\bv^\gamma
h'(\bv)}{h^2(\bv)}.\end{align*}

Now since $h'(\bv) =
-(\gamma+1)\bv^\gamma\bv'+(a+1)\gamma\bv^{\gamma-1}\bv'$, as
$\bv_+\to 1$, $I\sim -\bv' \ge0$. Therefore, as $\bv_+$ is close to
$1$, $G(0)\le \frac 14\bv'(0)|u(0)|^2\le0$. This, $g(\bv)\ge0$, and
\eqref{outid2} give, as $\bv_+$ is close enough to $1$,
\begin{align}\label{outid4}
\R(\lambda)\ipco{ \left[ \frac{\bV^{\gamma+1}}{h(\bV)}|u|^2+|v|^2
\right]} &+\ipco {\frac{\bV^\gamma}{h(\bV)}|u'|^2}\le 0,
\end{align} which evidently gives stability as claimed.
\end{proof}

\section{Nonvanishing of $D_{\rm in}$: expansive inflow case}\label{nonvanish-expansive-inflow}

For completeness, we recall the argument of \cite{MN.2}
in the expansive inflow case.

{\bf Profile equation.} Note that, in the expansive inflow case, we
assume $v_0 < v_+$. Therefore we can still follow the scaling
\eqref{scaling} to get
$$0<v_0< v_+ =1.$$

Then the stationary boundary layer  $(\bv,\bu)$ satisfies
\eqref{stationarybl} with $v_0 <v_+=1$. Now by integrating
\eqref{scalarode} from $x$ to $+\infty$ with noting that
$\bv(+\infty)=1$ and $\bv'(+\infty)=0$, we get the profile equation
$$\bv'= \bv(\bv-1 + a(\bv^{-\gamma} - 1)).$$

Note that $\bv'>0$. We now follow
the same method for compressive inflow case to get the following
eigenvalue system
\begin{subequations}\label{expinep}
\begin{align}
&\lambda  v +  v' -  u' =0, \label{expinep:1}\\
&\lambda  u +  u' -  (f v)' = \left(\frac{
u'}{\bV}\right)'.\label{expinep:2}
\end{align}
\end{subequations}with boundary
conditions
\begin{equation} u(0) =  v(0) =0,
\end{equation} where $f(\bv) = \frac{h(\bv)}{\bv^{\gamma+1}}$.

\begin{proof}[Proof of Proposition \ref{expansive}] Multiply the equation \eqref{expinep:2} by $\bar u$
and integrate along $x$. By integration by parts, we get $$\lambda
\ipi{|u|^2} + \ipi{u'\bar u + fv\bar u' + \frac{|u'|^2}{\bv}} =0.$$

Using \eqref{expinep:1} and taking the real part of the above yield
\begin{align}\label{exeq}\R\lambda \ipi{|u|^2+f|v|^2} -\frac 12 \ipi{f'|v|^2} +
\ipi{\frac{|u'|^2}{\bv}} =0.\end{align}

Note that $$f' = \left(1+a+\frac{a(\gamma^2-1)}{\bv^\gamma}\right)
\frac{-\bv'}{\bv^2} \le 0$$ which together with \eqref{exeq} gives
$\R\lambda <0$, the proposition is proved. \end{proof}

\def\cprime{$'$}

\end{document}